\newif\ifdraft
\patchcmd{\@citex}{\if@filesw}{\getcitekey\@citeb \if@filesw}%
    {\typeout{*** SUCCESS ***}}{\typeout{*** FAIL ***}}
\patchcmd{\nocite}{\if@filesw}{\getcitekey\@citeb \if@filesw}%
    {\typeout{*** SUCCESS ***}}{\typeout{*** FAIL ***}}
\newenvironment{renumerate}{%
	\begin{enumerate}[label=(\roman{*}), ref=(\roman{*})]
}{%
	\end{enumerate}%
}
\newenvironment{aenumerate}{%
	\begin{enumerate}[label=(\alph{*}), ref=(\alph{*})]
}{%
	\end{enumerate}%
}
\definecolor{labelkey}{gray}{0.5}
\newcommand{\Dmod}{\mathscr{D}}
\newcommand{\Mmod}{\mathcal{M}}
\newcommand{\shT}{\mathscr{T}}
\newcommand{\derR}{\mathbf{R}}
\newcommand{\derL}{\mathbf{L}}
\newcommand{\Ltensor}{\overset{\derL}{\tensor}}
\newcommand{\decal}[1]{\lbrack #1 \rbrack}
\newcommand{\shH}{\mathcal{H}}
\newcommand{\tensor}{\otimes}
\newcommand{\shHom}{\mathcal{H}\hspace{-1pt}\mathit{om}}
\newcommand{\ZZ}{\mathbb{Z}}
\newcommand{\QQ}{\mathbb{Q}}
\newcommand{\CC}{\mathbb{C}}
\newcommand{\PP}{\mathbb{P}}
\DeclareMathOperator{\id}{id}
\DeclareMathOperator{\Supp}{Supp}
\DeclareMathOperator{\codim}{codim}
\DeclareMathOperator{\Sym}{Sym}
\DeclareMathOperator{\gr}{gr}
\DeclareMathOperator{\DR}{DR}
\DeclareMathOperator{\Var}{Var}
\newcommand{\define}[1]{\emph{#1}}
\newcommand{\lie}[2]{\lbrack #1, #2 \rbrack}
\newcommand{\shf}[1]{\mathscr{#1}}
\newcommand{\OX}{\shf{O}_X}
\newcommand{\OmX}{\Omega_X}
\newcommand{\shV}{\shf{V}}
\def\overbar#1#2#3{{%
	\setbox0=\hbox{\displaystyle{#1}}%
	\dimen0=\wd0
	\advance\dimen0 by -#2 
	\vbox {\nointerlineskip \moveright #3 \vbox{\hrule height 0.3pt width \dimen0}%
		\nointerlineskip \vskip 1.5pt \box0}%
}}
\newcommand{\into}{\hookrightarrow}
\newcommand{\jl}{j_{\ast}}
\newcommand{\fu}{f^{\ast}}
\newcommand{\fl}{f_{\ast}}
\newcommand{\pu}{p^{\ast}}
\newcommand{\pl}{p_{\ast}}
\newcommand{\tl}{t_{\ast}}
\newcommand{\hu}{h^{\ast}}
\newcommand{\varphiu}{\varphi^{\ast}}
\newcommand{\varphil}{\varphi_{\ast}}
\newcommand{\hl}{h_{\ast}}
\newcommand{\hp}{h_{+}}
\newcommand{\shF}{\shf{F}}
\newcommand{\shG}{\shf{G}}
\newcommand{\shE}{\shf{E}}
\newcommand{\shO}{\shf{O}}
\let\@@seccntformat\@seccntformat
\renewcommand*{\@seccntformat}[1]{%
  \expandafter\ifx\csname @seccntformat@#1\endcsname\relax
    \expandafter\@@seccntformat
  \else
    \expandafter
      \csname @seccntformat@#1\expandafter\endcsname
  \fi
    {#1}%
}
\newcommand*{\@seccntformat@subsection}[1]{%
  \textbf{\csname the#1\endcsname.}
}
\let\@paragraph\paragraph
\renewcommand*{\paragraph}[1]{%
	\vspace{0.3\baselineskip}%
	\@paragraph{\textit{#1}}%
}
\newtheorem{theorem}[equation]{Theorem}
\newtheorem*{theorem*}{Theorem}
\newtheorem{lemma}[equation]{Lemma}
\newtheorem*{lemma*}{Lemma}
\newtheorem{corollary}[equation]{Corollary}
\newtheorem*{corollary*}{Corollary}
\newtheorem{proposition}[equation]{Proposition}
\newtheorem*{proposition*}{Proposition}
\newtheorem{conjecture}[equation]{Conjecture}
\newtheorem*{conjecture*}{Conjecture}
\theoremstyle{definition}
\newtheorem{definition}[equation]{Definition}
\newtheorem*{definition*}{Definition}
\theoremstyle{remark}
\newtheorem{example}[equation]{Example}
\newtheorem*{example*}{Example}
\newtheorem*{problem*}{Problem}
\newtheorem*{note}{Note}
\theoremstyle{plain}
\newcommand{\theoremref}[1]{\hyperref[#1]{Theorem~\ref*{#1}}}
\newcommand{\lemmaref}[1]{\hyperref[#1]{Lemma~\ref*{#1}}}
\newcommand{\definitionref}[1]{\hyperref[#1]{Definition~\ref*{#1}}}
\newcommand{\propositionref}[1]{\hyperref[#1]{Proposition~\ref*{#1}}}
\newcommand{\conjectureref}[1]{\hyperref[#1]{Conjecture~\ref*{#1}}}
\newcommand{\corollaryref}[1]{\hyperref[#1]{Corollary~\ref*{#1}}}
\newcommand{\exampleref}[1]{\hyperref[#1]{Example~\ref*{#1}}}
\newcommand{\exerciseref}[1]{\hyperref[#1]{Exercise~\ref*{#1}}}
\let\old@caption\caption
\renewcommand*{\caption}[1]{%
	\setcounter{figure}{\value{equation}}%
	\stepcounter{equation}%
	\old@caption{#1}\relax%
}
\newcounter{intro}
\newtheorem{intro-conjecture}[intro]{Conjecture}
\newtheorem{intro-corollary}[intro]{Corollary}
\newtheorem{intro-theorem}[intro]{Theorem}
\newcommand{\omY}{\omega_Y}
\newcommand{\OmY}{\Omega_Y}
\newcommand{\OY}{\shO_Y}
\newcommand{\df}{\mathit{df}}
\newcommand{\parref}[1]{\hyperref[#1]{\S\ref*{#1}}}
\newcommand{\chapref}[1]{\hyperref[#1]{Chapter~\ref*{#1}}}
\newcommand*\if@single[3]{%
  \setbox0\hbox{${\mathaccent"0362{#1}}^H$}%
  \setbox2\hbox{${\mathaccent"0362{\kern0pt#1}}^H$}%
  \ifdim\ht0=\ht2 #3\else #2\fi
  }
\newcommand*\rel@kern[1]{\kern#1\dimexpr\macc@kerna}
\newcommand*\widebar[1]{\@ifnextchar^{{\wide@bar{#1}{0}}}{\wide@bar{#1}{1}}}
\newcommand*\wide@bar[2]{\if@single{#1}{\wide@bar@{#1}{#2}{1}}{\wide@bar@{#1}{#2}{2}}}
\newcommand*\wide@bar@[3]{%
  \begingroup
  \def\mathaccent##1##2{%
    \if#32 \let\macc@nucleus\first@char \fi
    \setbox\z@\hbox{$\macc@style{\macc@nucleus}_{}$}%
    \setbox\tw@\hbox{$\macc@style{\macc@nucleus}{}_{}$}%
    \dimen@\wd\tw@
    \advance\dimen@-\wd\z@
    \divide\dimen@ 3
    \@tempdima\wd\tw@
    \advance\@tempdima-\scriptspace
    \divide\@tempdima 10
    \advance\dimen@-\@tempdima
    \ifdim\dimen@>\z@ \dimen@0pt\fi
    \rel@kern{0.6}\kern-\dimen@
    \if#31
      \overline{\rel@kern{-0.6}\kern\dimen@\macc@nucleus\rel@kern{0.4}\kern\dimen@}%
      \advance\dimen@0.4\dimexpr\macc@kerna
      \let\final@kern#2%
      \ifdim\dimen@<\z@ \let\final@kern1\fi
      \if\final@kern1 \kern-\dimen@\fi
    \else
      \overline{\rel@kern{-0.6}\kern\dimen@#1}%
    \fi
  }%
  \macc@depth\@ne
  \let\math@bgroup\@empty \let\math@egroup\macc@set@skewchar
  \mathsurround\z@ \frozen@everymath{\mathgroup\macc@group\relax}%
  \macc@set@skewchar\relax
  \let\mathaccentV\macc@nested@a
  \if#31
    \macc@nested@a\relax111{#1}%
  \else
    \def\gobble@till@marker##1\endmarker{}%
    \futurelet\first@char\gobble@till@marker#1\endmarker
    \ifcat\noexpand\first@char A\else
      \def\first@char{}%
    \fi
    \macc@nested@a\relax111{\first@char}%
  \fi
  \endgroup
}
\newcommand{\omX}{\omega_X}
\newcommand{\omYX}{\omega_{Y/X}}
\newcommand{\omZX}{\omega_{Z/X}}
\newcommand{\omZ}{\omega_Z}
\newcommand{\OmZ}{\Omega_Z}
\newcommand{\OZ}{\shO_Z}
\newcommand{\shQ}{\mathscr{Q}}
\newcommand{\shA}{\mathcal{A}}
\newcommand{\shEb}{\shE_{\bullet}}
\newcommand{\shFb}{\shF_{\bullet}}
\newcommand{\shGb}{\shG_{\bullet}}
\DeclareMathOperator{\HM}{HM}
\newcommand{\grFMb}{\gr_{\bullet}^F \! \Mmod}
\newcommand{\grFM}{\shG^M}
\renewcommand{\shV}{\mathcal{V}}
\newcommand{\shVt}{\tilde{\shV}}
\newcommand{\shVg}{\shVt^{\geq 0}}
\begin{document}

\title[Viehweg's hyperbolicity conjecture]{Viehweg's hyperbolicity conjecture for families with maximal variation}

\author{Mihnea Popa}
\address{Department of Mathematics, Northwestern University,
2033 Sheridan Road, Evanston, IL 60208, USA} 
\email{mpopa@math.northwestern.edu}

\author{Christian Schnell}
\address{Department of Mathematics, Stony Brook University, Stony Brook, NY 11794-3651}
\email{cschnell@math.sunysb.edu}

\subjclass[2000]{14D06, 14D07; 14F10, 14E30}

\begin{abstract}
We use the theory of Hodge modules to construct Viehweg-Zuo sheaves on base spaces of families with maximal 
variation and fibers of  general type; and, more generally, families whose geometric generic fiber has a good minimal model. Combining 
this with a result of Campana-P\u aun, we 
deduce Viehweg's hyperbolicity conjecture in this context, namely the fact that the base spaces of such families 
are of log general type. This is approached as part of a general problem of identifying what spaces can support Hodge theoretic objects with certain positivity properties.
\end{abstract}

\date{\today}

\setlength{\parskip}{0.05\baselineskip}

\maketitle



\section{Introduction}

\subsection{Families of varieties}
The main aim of this paper is to give a proof of Viehweg's hyperbolicity conjecture
for base spaces of families of varieties of general type with maximal variation, and
more generally, when assuming the conjectures
of the minimal model program, for arbitrary families with maximal variation.

\begin{intro-theorem}\label{thm:hyperbolicity}
Let $f: Y \rightarrow X$ be an algebraic fiber space between smooth projective
varieties, and let $D \subseteq X$ be any divisor containing the singular locus of $f$. Assume that
$f$ has maximal variation, in the sense that $\Var (f) = \dim X$. Then:
\begin{renumerate}
\item If the general fiber of $f$ is of general type, then the pair $(X, D)$ is of
log-general type, meaning that $\omX (D)$ is big. 
\item More generally, the same conclusion holds if the geometric generic fiber of $f$ admits a good minimal model.
\end{renumerate}
\end{intro-theorem}

We obtain \theoremref{thm:hyperbolicity} as a consequence of the main result we prove, regarding the
existence of what are sometimes called Viehweg-Zuo sheaves, stated below, combined
with a key theorem of Campana-P\u{a}un \cite{CPa} on the pseudo-effectivity of quotients of powers of log-cotangent bundles.

\begin{intro-theorem}\label{thm:VZ-sheaves}
Let $f: Y \rightarrow X$ be an algebraic fiber space between smooth projective
varieties, such that the $f$-singular locus $D_f \subseteq X$ is a simple normal crossing divisor.
Assume that for every generically finite $\tau: \tilde X \rightarrow X$ with $\tilde
X$ smooth, and for every resolution $\tilde Y$ of $Y\times_X \tilde X$, there is an
integer $m \geq 1$ such that 
$\det {\tilde f}_* \omega_{\tilde Y/ \tilde X}^{\otimes m}$ is big. Then there exists a big coherent sheaf $\shH$ on $X$ and an integer 
$s \ge 1$, together with an inclusion
$$\shH \hookrightarrow \big(\OmX^1 (\log D_f) \big)^{\otimes s}.$$
\end{intro-theorem}

Going back to the statement of \theoremref{thm:hyperbolicity}, the variation $\Var
(f)$ is an invariant introduced by Viehweg \cite{Viehweg1} in order to measure how
much the birational isomorphism class of the fibers of $f$ varies along $Y$. Maximal
variation $\Var(f) = \dim X$ simply means that the general fiber can only be
birational to countably many other fibers. The connection between the two theorems is made
via Viehweg's $Q_{n,m}$ conjecture, which states that if $f$ has maximal variation,
then $\det \fl \omYX^{\otimes m}$ is big for some $m \geq 1$.\footnote{By the
determinant of a torsion-free sheaf $\shF$ of generic rank $r$, we mean $(\bigwedge^r
\shF)^{\vee \vee}$.} This implies Viehweg's $C_{n,m}^+$ generalization of Iitaka's
conjecture, see \cite[Theorem II]{Viehweg1} and \cite[Remark 3.7]{Viehweg2}, and was
shown to hold when the fibers are of general type by Koll\'ar \cite{Kollar} (see also
\cite{Viehweg4}). Moreover, Kawamata \cite[Theorem 1.1]{Kawamata} proved that
that $Q_{n,m}$ holds for any morphism whose geometric generic fiber has a good
minimal model.

\subsection{Previous results}
Viehweg's original conjecture (see \cite[6.3]{Viehweg3} or \cite[Problem 1.5]{VZ2})
is a generalization of Shafarevich's conjecture on non-isotrivial one-parameter
families of curves:  it states that if $X^{\circ}$ is smooth and quasi-projective,
and $f^{\circ}: Y^{\circ} \rightarrow X^{\circ}$ is a smooth family of canonically
polarized varieties with maximal variation,  then $X^\circ$ must be of log-general
type.\footnote{It is standard that $f^{\circ}$ can be compactified to a morphism $f:
X\rightarrow Y$ whose singular locus $D = X \setminus X^{\circ}$  is a divisor. In the paper we phrase things directly in this set-up, but note that the conclusions should be seen as properties of the original family $f^{\circ}$.} This is of course very much related to the study of subvarieties of moduli stacks.

In this setting, i.e. for families of canonically polarized varieties, 
\theoremref{thm:hyperbolicity} is by now fully known. This is due
to important work of many authors, all relying crucially on the existence of
Viehweg-Zuo sheaves \cite[Theorem 1.4]{VZ2} for such families;  we briefly
review the main highlights, without providing an exhaustive list (but see also
\cite{Kovacs} and \cite{Kebekus} for comprehensive surveys and references to previous
work over one-dimensional bases). The result was shown by Kebekus-Kov\'acs when $X$
is a surface in \cite{KK1}, and then in \cite{KK2} when $D= \emptyset$ assuming the
main conjectures of the minimal model program, while \cite{KK3} contains more refined results in
dimension at most three. It was then deduced unconditionally by Patakfalvi
\cite{Patakfalvi} from the results of \cite{CP}, when $D = \emptyset$ and when $X$ is
not uniruled. Finally, Campana-P\u aun obtained the result in general,
based on their bigness criterion \cite[Theorem 7.7]{CPa} that we use here as well. Using further 
results from \cite{VZ2}, it is possible to extend this argument to families of varieties with semiample
canonical bundle.

The work of Viehweg-Zuo \cite{VZ1}, \cite{VZ2} suggested however that Viehweg's conjecture should hold much more generally, 
indeed for all families with maximal variation.  Note that the full \theoremref{thm:hyperbolicity} in the case $\dim X = 1$  was proved in \cite[Theorem 0.1]{VZ1}.  Our work owes a lot to the general strategy introduced in their papers, as we will see below.

\subsection{Kebekus-Kov\'acs and Campana conjectures}
In \cite{KK1} Kebekus and Kov\'acs proposed a natural extension of Viehweg's conjecture taking into account 
families of canonically polarized varieties that are not necessarily of maximal variation. At least when $\kappa (X^\circ) \ge 0$ it predicts that one should have 
$\kappa (X^\circ) \ge {\rm Var}(f)$;  see \conjectureref{generalized_conjecture} for the precise statement and the results they obtained.
We remark in \S\ref{scn:KK} that, at least when $X^\circ$ is projective, the methods of this paper also apply to the extension of this  conjecture to families whose geometric generic fiber has a good minimal model, assuming a positive answer to an abundance-type conjecture of Campana-Peternell.

A closely related problem is Campana's isotriviality conjecture, which predicts that
smooth families of canonically polarized varieties over a special base must be
isotrivial, and which  implies the Kebekus-Kov\'acs conjecture. This conjecture has recently been proved by Taji \cite{Taji}, 
It would be interesting to know whether our construction (based on Hodge modules) can be adapted
to the case of orbifolds.

\subsection{Abstract results on Hodge modules and Higgs bundles}
The main new ingredient for proving \theoremref{thm:VZ-sheaves} is the construction in \chapref{chap:Hodge}
of certain Hodge modules, and of Higgs bundles derived from them. They are associated to morphisms whose relative canonical bundle satisfies a mild positivity condition; see ($\ref{eq:sections}$) below. 
Such Hodge modules and Higgs bundles satisfy a ``largeness" property for the first
step in the Hodge filtration, defined in \S\ref{big-HM}. The existence of a
Viehweg-Zuo sheaf, meaning a big sheaf as in \theoremref{thm:VZ-sheaves}, turns out to be an instance of an abstract result about Hodge modules with this property.
Oversimplifying in order to explain the main idea, we consider pure Hodge modules 
with the property that there exist a big line bundle $A$ and a sheaf inclusion
\begin{equation}\label{eqn:first}
A \hookrightarrow F_{p(\Mmod)} \Mmod \otimes \shO_X(\ell D)
\end{equation}
where $F_{p(\Mmod)} \Mmod$ is the lowest non-zero term in the  filtration
$F_{\bullet} \Mmod$ on the underlying $\Dmod$-module $\Mmod$, $D$ is a divisor away from which $M$ is a variation of Hodge structure, and $\ell \ge 0$ is an integer.  This can be seen as an abstract version of the more familiar property of the period map being immersive at a point.

\begin{intro-theorem}\label{thm:HM-positivity}
Let $X$ be a smooth projective variety, and $M$ a polarizable pure Hodge module with strict support $X$, extending a variation of Hodge structure of weight $k$ on a dense open subset $U = X \setminus D$, with $D$ a divisor. If its underlying filtered  
$\Dmod_X$-module $(\Mmod, F_{\bullet}\Mmod)$ satisfies ($\ref{eqn:first}$), then at least one of the following holds:

\begin{renumerate}
\item $D$ is big.
\item There exists $1 \le s \le k$, $r \ge 1$, and a big coherent sheaf $\shG$ on $X$
such that  $$\shH \hookrightarrow (\OmX^1)^{\otimes s} \otimes \shO(rD).$$
\end{renumerate}
Consequently, if $X$ is not uniruled, then $\omX (D)$ is big.
\end{intro-theorem}

Therefore variations of Hodge structure can have such large extensions only if they
are supported on the complement of a sufficiently positive divisor.\footnote{Another example of this phenomenon is  \cite[Theorem 26.2]{Schnell}, which says that $D$ must be ample when $X$ is an abelian variety.} When the base is not uniruled this implies by the result of  \cite{CPa} that it must actually be of log-general type. In this abstract context the non-uniruledness hypothesis is necessary; see \exampleref{ex:uniruled-counterex}.

In reality, for applications like \theoremref{thm:VZ-sheaves} a more refined setup is needed. Besides the polarizable Hodge module $M$,  we also need to consider a graded $\Sym \shT_X$-submodule  $\shGb \subseteq \gr_{\bullet}^F \! \Mmod$ of the associated graded of the underlying filtered $\Dmod_X$-module $(\Mmod, F_{\bullet} \Mmod)$. This is constructed in \theoremref{thm:VZ}.
In the case when $X$ is an abelian variety, this type of construction was considered in \cite{PS}. 
We prove and use a slightly stronger version of \theoremref{thm:HM-positivity} involving such submodules; see 
 \theoremref{thm:submodule-positivity} for the precise statement, which uses a recent weak positivity result for Hodge modules from \cite{PW}.

To deal with the case when $X$ is uniruled, an additional step is needed. Using the
pair $(M, \shGb)$ from above, we produce 
in \theoremref{thm:Higgs} another pair $(\shEb, \shFb)$, where $(\shEb, \theta)$ is a
(graded logarithmic) Higgs bundle with poles along a divisor 
$D$ containing $D_f$, while $\shFb \subseteq \shEb$ is a subsheaf such that
\[
	\theta(\shFb) \subseteq  \Omega^1_X (\log D_f) \otimes \shF_{\bullet+1},
\]
and which again satisfies a largeness property. This uses some of the more technical
aspects of the theory of Hodge modules, especially the interaction between the Hodge filtration and the $V$-filtration along hypersurfaces. The corresponding version of \theoremref{thm:HM-positivity} is  \theoremref{thm:submodule-Higgs}, which finally produces the Viehweg-Zuo sheaf best suited for our purposes.

\subsection{Outline of the proof}

We summarize the discussion above into a brief outline of the proof of \theoremref{thm:VZ-sheaves} and \theoremref{thm:hyperbolicity}.

\begin{enumerate}
\item Due to results of Kawamata, Koll\'ar, and Viehweg,  families with maximal
variation whose geometric generic fiber has a good minimal model satisfy the
hypothesis of \theoremref{thm:VZ-sheaves}.
\item Given a big line bundle $L$ on $X$, we show that the
$m$-th power of the line bundle $\omYX \tensor \fu L^{-1}$ has a nontrivial global
section for some $m \ge 1$. This uses the bigness of $\det \fl \omega_{\tilde Y/ \tilde X}^{\tensor m}$ 
on generically finite covers of $X$, Viehweg's fiber product trick, and semistable reduction; in the process, we replace $Y$
by a resolution of singularities of a very large fibered product $Y \times_X \dotsm
\times_X Y$.
\item We construct a polarizable Hodge module $M$ on $X$, together with a graded
$\Sym \shT_X$-submodule $\shGb \subseteq \grFMb$, such that $\shG_0 = L$ and $\Supp
\shG \subseteq S_f$, the set of singular cotangent vectors of $f$ in $T^* X$. 
Both are obtained from a resolution of singularities of a
branched covering determined by the section in (2). After resolving
singularities, we may assume that $M$ restricts to a variation of Hodge structure
outside a normal crossing divisor $D \supseteq D_f$.
\item The variation of Hodge structure on $X \setminus D$ determines a (graded
logarithmic) Higgs bundle $\shEb$ with Higgs field
\[
	\theta \colon \shEb \to \OmX^1(\log D) \tensor \shE_{\bullet+1}.
\]
From $\shGb$ in (3), we construct 
a subsheaf $\shFb \subseteq \shEb$ such that
$\shF_0$ is a big line bundle and $\theta(\shFb) \subseteq \OmX^1(\log D_f) \tensor
\shF_{\bullet+1}$. This uses the interaction between the Hodge filtration and the $V$-filtration on Hodge modules.
\item We deduce that some large tensor power of $\OmX^1(\log D_f)$ contains a big subsheaf  
(or Viehweg-Zuo sheaf), concluding the proof of \theoremref{thm:VZ-sheaves}.
The main ingredient is a slight extension of a theorem of Zuo, 
to the effect that the dual of the kernel of $\theta \colon \shEb \to \OmX^1(\log D) \tensor
\shE_{\bullet+1}$ is weakly positive.
\item Finally, to deduce \theoremref{thm:hyperbolicity}, we can assume after a birational modification that the $f$-singular locus 
$D_f$ is a divisor with simple normal crossings. We apply \theoremref{thm:VZ-sheaves} and a recent theorem by Campana and P\u{a}un to conclude that, 
in this situation, the line bundle $\det \OmX^1(\log D_f) = \omega_X ( D_f)$ is big. This
proves that the pair $(X, D_f)$ is of log general type.
\end{enumerate}

These steps are addressed throughout the paper, and are collected together in
\S\ref{scn:general}. We also include in \S\ref{scn:non-uniruled} a substantially
simpler proof of \theoremref{thm:hyperbolicity} in the case when $X$ is not uniruled.
It avoids many of the technicalities involved in dealing with the remaining case,
while still containing all the key ideas in a particularly transparent form; hence it
may help at a first reading. In brief, it only needs a less precise version of (1),
which does not use semistable reduction; it does not need (4), which is the most
technical part of the general proof, and it replaces (5) with similar results applied directly to the Hodge module construction in (2).

\subsection{What is new}
As mentioned above, our work owes to the beautiful approach of Viehweg and Zuo
\cite{VZ1, VZ2, VZ3} to the study of families with maximal variation, by means of constructing Viehweg-Zuo sheaves as a main step towards understanding the base space of such a family. For families of varieties with semiample canonical bundle they constructed 
Higgs systems as in (4) above, using a very delicate analysis based on weak positivity, while dealing with mild enough singularities due to the semiampleness assumption. They pioneered the idea of using negativity results for Kodaira-Spencer 
kernels to extract positivity from the Higgs systems thus constructed.

This paper offers two main new inputs. The first is to view the hyperbolicity problem as a special case of the study of spaces supporting abstract Hodge theoretic objects satisfying the largeness property described above, an interesting problem in its own right.
The second, and most significant, is the use of Hodge modules. Just as in \cite{PS}, we are able to address a more general situation due to the fact that Hodge modules provide higher flexibility in dealing with the singularities created by section produced in (2), and for applying positivity results. Even in the case of canonically polarized fibers this simplifies the argument in \cite{VZ2}, at least when the base is not uniruled. In general however, besides appealing to a Hodge module construction, providing a Higgs system with all the necessary properties requires the use of some quite deep input from Saito's theory.

\subsection{Acknowledgement}

The first author is grateful for the hospitality of the Department of Mathematics at
the University of Michigan, where he completed part of this work.  Both authors thank Fr\'ed\'eric Campana, 
Stefan Kebekus, S\'andor Kov\'acs, Mihai P\u{a}un, and Behrouz Taji for useful discussions. They also 
thank the referee for suggestions that improved the exposition.
During the preparation of the paper, M. Popa was partially supported by NSF grant
DMS-1405516 and a Simons Fellowship, and Ch. Schnell by NSF grant DMS-1404947 and a
Centennial Fellowship of the American Mathematical Society.

\section{Construction of Hodge modules and Higgs bundles}
\label{chap:Hodge}

Let $f \colon Y \to X$ be a surjective morphism between two smooth projective
varieties. In this chapter, we describe a general method for obtaining information
about the $f$-singular locus $D_f \subseteq X$ of the morphism from positivity
assumptions on the relative canonical bundle $\omYX$. 

\subsection{Cotangent bundles}

We begin by introducing a more refined measure, inside the cotangent bundle, for the
singularities of a morphism. Given a surjective morphism $f \colon Y \to X$ between
two smooth projective varieties, we use the following notation for the induced
morphisms between the cotangent bundles of $X$ and $Y$.
\begin{equation} \label{eq:Japanese}
\begin{tikzcd}
Y \dar{f} & T^{\ast} X \times_X Y \lar[swap]{p_2} \dar{p_1} \rar{\df} & T^{\ast} Y \\
X & T^{\ast} X \lar[swap]{p}
\end{tikzcd}
\end{equation}
Inside the cotangent bundle of $X$, consider the set of \emph{singular cotangent
vectors}
\[
	S_f = p_1 \bigl( \df^{-1}(0) \bigr) \subseteq T^{\ast} X.
\]
A cotangent vector $(x, \xi) \in T^{\ast} X$ belongs to $S_f$ if and
only if $\fu \xi$ vanishes at some point $y \in f^{-1}(x)$, or equivalently, if $\xi$
annihilates the image of $T_y Y$ inside the tangent space $T_x X$. If this happens
and $\xi \neq 0$, then $f$ is not submersive at $y$, which means that $x$ belongs to
the $f$-singular locus $D_f \subseteq X$. Consequently, $S_f$ is the union of the zero
section and a closed conical subset of $T^{\ast} X$ whose image under the projection
$p \colon T^{\ast} X \to X$ is equal to $D_f$.
The set of singular cotangent vectors $S_f$ has the following interesting property.  

\begin{lemma} \label{lem:Sf}
One has $\dim S_f \leq \dim X$, and every irreducible component of $S_f$ of dimension
$\dim X$ is the conormal variety of a subvariety of $X$.
\end{lemma}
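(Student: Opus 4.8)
The statement concerns $S_f = p_1(\df^{-1}(0)) \subseteq T^\ast X$, the image under $p_1$ of the zero-fiber of the map $\df \colon T^\ast X \times_X Y \to T^\ast Y$. I want to show $\dim S_f \le \dim X$ and that every top-dimensional component is a conormal variety. The key structural fact to exploit is that $\df^{-1}(0)$ is the relative cotangent bundle: over a point $y \in Y$ lying over $x$, the fiber of $\df$ in $T^\ast_x X$ is exactly the annihilator of the image of $df_y \colon T_y Y \to T_x X$, i.e. the conormal space to (the image of) $f$ at $y$. So $\df^{-1}(0) = (df)^\ast T^\ast_{Y/X}$-flavored object — more precisely it is the kernel of $T^\ast X \times_X Y \to T^\ast Y$, which is the pullback to $Y$ of the "relative conormal" and is naturally identified with the conormal bundle of $Y$ inside $Y \times X$... but the cleanest description is: $\df^{-1}(0)$ is the conormal variety $C_f \subseteq T^\ast X \times_X Y$ of the morphism $f$, classically studied in the theory of characteristic varieties and Lagrangian cycles.

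**Main steps.** First I would recall (or prove in a line) that $\df^{-1}(0)$, call it $Z$, is a conic subvariety of $T^\ast X \times_X Y$, and that the natural map $Z \to Y$ is a vector bundle morphism away from the non-submersive locus, hence on the locus where $f$ is submersive $Z$ is just the zero section pulled back — contributing only the zero section of $T^\ast X$ to $S_f$. So the only interesting contributions to $S_f$ come from components of $Z$ lying over proper closed subsets of $Y$. Second, and this is the heart: I would stratify $X$ (or $Y$) so that on each stratum the rank of $df$ is locally constant; on a stratum $W \subseteq Y$ where $\operatorname{rk} df = r$, the fiber of $Z \to Y$ has dimension $\dim X - r$, while $W$ has dimension $\le \dim X$... this is not quite enough, so I'd instead bound the image. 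The right move: work fiber-dimension-wise. For a point $(x,\xi) \in S_f$ with $\xi \ne 0$, $\xi$ annihilates $\operatorname{im}(df_y) = T_x B$ for some intermediate $B$, so $(x,\xi)$ lies in the conormal variety of the (germ of the) subvariety $f(Y')$ for a suitable component $Y'$ of a fiber of the rank-stratification. Conormal varieties of subvarieties of $X$ always have dimension exactly $\dim X$ and are the irreducible conic Lagrangians; so $S_f$ minus the zero section is covered by finitely many such conormal varieties, each of dimension $\le \dim X$, giving $\dim S_f \le \dim X$. Third, for the refined statement: each top-dimensional irreducible component $\Lambda$ of $S_f$, being irreducible of dimension $\dim X$ and conic, and being contained in one of these conormal varieties $T^\ast_{Z} X$ (Zariski closure of the conormal of a smooth locus of a subvariety $Z$), must equal it by irreducibility and dimension count — hence $\Lambda$ is itself a conormal variety.

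**The main obstacle.** The delicate point is getting the dimension bound uniformly rather than just "on strata." The cleanest route is to invoke the general principle that the relative conormal variety $C_f \subseteq T^\ast X \times_X Y$ of a morphism of smooth varieties has the property that its image in $T^\ast X$ is a (finite union of) conic Lagrangian(s) — essentially because $C_f$ is Lagrangian in $T^\ast X \times_X Y$ relative to the fibers, and pushing forward Lagrangians along the projection, when it is generically finite onto its image, preserves Lagrangianness; when it is not generically finite, the image has strictly smaller dimension. I would phrase the argument to sidestep needing $C_f$ itself to be irreducible: decompose $\df^{-1}(0)$ into irreducible components $Z_i$, note each $Z_i$ is conic, let $W_i = \overline{p_1(Z_i)} \subseteq T^\ast X$, and analyze $Z_i \to W_i$. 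If this map is generically finite, then $\dim W_i = \dim Z_i$; I claim $\dim Z_i \le \dim X$ because $Z_i \to Y$ has fibers that are linear subspaces of the $(\dim X)$-dimensional cotangent fibers, of dimension $\dim X - \operatorname{rk}(df)$ on the relevant locus, and $Z_i$ lives over a subvariety of $Y$ of complementary-enough dimension — here one genuinely uses that $\operatorname{rk}(df) \ge \dim X - \operatorname{codim}$ of that subvariety's image, or more simply the semicontinuity of fiber dimension. If instead $Z_i \to W_i$ drops dimension, then certainly $\dim W_i < \dim Z_i \le \dim(T^\ast X \times_X Y)$, but one must still show $\dim W_i \le \dim X$; this follows because generically along $W_i$ the fiber $Z_i \cap p_1^{-1}(x,\xi)$ sits inside $f^{-1}(x)$, of dimension $\dim Y - \dim X$, so $\dim Z_i \le \dim W_i + (\dim Y - \dim X)$, hence $\dim W_i \ge \dim Z_i - (\dim Y - \dim X)$ — combined with $\dim Z_i \le \dim Y$ (as $Z_i$ maps to $Y$ with fibers inside linear spaces whose dimension we control) this is what needs care. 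I expect the bookkeeping of which projection to use for which component to be the part requiring the most attention; everything else is standard symplectic geometry of conormal varieties.
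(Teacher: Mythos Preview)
Your proposal contains the right germ of an idea---that a singular cotangent vector $(x,\xi)$ lies in the conormal variety of the image $f(Y')$ of a suitable subvariety $Y'\subseteq Y$---but you bury it under a stratification-and-dimension-count argument that is both unnecessary and, as written, incorrect.

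\textbf{The errors in your dimension count.} Your claim that a non-zero-section irreducible component $Z_i$ of $\df^{-1}(0)$ satisfies $\dim Z_i \le \dim X$ is false. Take $X=\mathbb{A}^2$, let $Y$ be the blowup of $\mathbb{A}^2\times\mathbb{A}^1$ along $\{0\}\times\mathbb{A}^1$, and let $f$ be the composite with the projection to $\mathbb{A}^2$. The exceptional divisor $E$ (dimension $2$) maps to the origin, and on $E$ the differential has rank $1$; the component of $\df^{-1}(0)$ over $E$ then has dimension $3 > \dim X = 2$. In your alternate case, where $Z_i\to W_i$ is not generically finite, you assert that the fiber $Z_i\cap p_1^{-1}(x,\xi)$ sits inside $f^{-1}(x)$ ``of dimension $\dim Y-\dim X$''---but special fibers of $f$ can be larger, as in this same example. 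And even granting that, your resulting inequality $\dim W_i \ge \dim Z_i - (\dim Y-\dim X)$ is a \emph{lower} bound on $\dim W_i$, not the upper bound you need.

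\textbf{The clean argument.} The paper sidesteps all of this by working directly in $S_f$ rather than in $\df^{-1}(0)$. Fix an irreducible component $W\subseteq S_f$ and set $Z=p(W)\subseteq X$. It suffices to show $W\subseteq T^\ast_Z X$, since conormal varieties have dimension exactly $\dim X$; this yields both assertions at once. For a general $(x,\xi)\in W$ there is some $y\in f^{-1}(x)$ with $\xi\perp\operatorname{im}(df_y)$. Lifting $W$ to an irreducible subvariety of $\df^{-1}(0)$ and projecting to $Y$ via $p_2$, one obtains a subvariety of $Y$ whose image under $f$ is $Z$ and which contains such $y$; by generic smoothness of this restricted map, at a general smooth point $x\in Z$ one has $\operatorname{im}(df_y)\supseteq T_xZ$. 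Hence $\xi\perp T_xZ$, i.e.\ $(x,\xi)\in T^\ast_Z X$. No stratification, no Lagrangian formalism, and no case analysis on fiber dimensions is needed.
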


\begin{proof}
Fix an irreducible component $W \subseteq S_f$, and denote by $Z = p(W)$ its image in
$X$; because $S_f$ is conical, this is a closed subvariety of $X$. Both
assertions will follow if we manage to show that $W$ is contained in the conormal
variety
\[
	T_Z^{\ast} X 
		= \text{closure in $T^{\ast} X$ of the conormal bundle to the smooth locus of $Z$.}
\]
By definition, for every cotangent vector $(x, \xi) \in W$, there is a point $y \in
f^{-1}(x)$ such that $\xi$ vanishes on the image of $T_y Y \to T_x X$. At a general
smooth point $x \in Z$, this image contains the tangent space $T_x Z$, and so
$(x, \xi) \in T_Z^{\ast} X$. 
\end{proof}

The zero section is clearly one of the irreducible components of $S_f$; for dimension
reasons, the conormal varieties of the divisorial components of $D_f$ are also
contained in $S_f$. Other irreducible components of dimension $\dim X$ are less easy to
come by. 

Evidently, the morphism $f$ is smooth if and only if $S_f$ is equal to the zero
section. One method for getting a lower bound on the size of $S_f$ -- and, therefore,
of $D_f$ -- is to look for coherent sheaves on $T^{\ast} X$ whose support is contained in
the set $S_f$. In practice, it is better to work with sheaves of graded modules over
the symmetric algebra $\shA_X = \Sym \shT_X$, where $\shT_X$ is the tangent sheaf of
$X$. Recall that 
\[
	\shA_X \simeq \pl \shO_{T^{\ast} X},
\]
and that taking the direct image under $p \colon T^{\ast} X \to X$ gives an
equivalence of categories between (algebraic) coherent sheaves on the cotangent bundle
and coherent $\shA_X$-modules. For a coherent graded $\shA_X$-module 
\[
	\shGb = \bigoplus_{k \in \ZZ} \shG_k,
\]
we use the symbol $\shG$, without the dot, to denote the associated coherent sheaf on
$T^{\ast} X$; it has the property that $\pl \shG \simeq \shGb$ as modules over
$\shA_X$ (without the grading).

\subsection{The main result}

For the remainder of the chapter, let us fix a surjective morphism $f \colon Y \to X$
between two smooth projective varieties. We also fix a line bundle $L$ on $X$, and
consider on $Y$ the line bundle
\[
	B = \omYX \tensor \fu L^{-1}.
\]
We assume that the following condition holds: 
\begin{equation}\label{eq:sections}
	H^0 (Y, B^{\otimes m}) \neq 0 \quad \text{for some $m \geq 1$.}
\end{equation}
Starting from this data, we construct a graded module over $\Sym \shT_X$ with the
following properties.

\begin{theorem} \label{thm:VZ}
Assuming \eqref{eq:sections}, one can find a graded $\shA_X$-module $\shGb$
that is coherent over $\shA_X$ and has the following properties:
\begin{aenumerate}
\item \label{en:VZa}
As a coherent sheaf on the cotangent bundle, $\Supp \shG \subseteq S_f$.
\item \label{en:VZb}
One has $\shG_0 \simeq L \tensor \fl \OY$.
\item \label{en:VZc}
Each $\shG_k$ is torsion-free on the open subset $X \setminus D_f$. 
\item \label{en:VZd}
There exists a regular holonomic $\Dmod$-module $\Mmod$ with good filtration
$F_{\bullet} \Mmod$, and an inclusion of graded $\shA_X$-modules $\shGb \subseteq
\gr_{\bullet}^F \! \Mmod$.
\item \label{en:VZe}
The filtered $\Dmod$-module $(\Mmod, F_{\bullet} \Mmod)$ underlies a polarizable Hodge module $M$ on
$X$ with strict support $X$, and $F_k \Mmod = 0$ for $k < 0$.
\end{aenumerate}
\end{theorem}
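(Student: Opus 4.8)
The plan is to start from a nonzero section $s \in H^0(Y, B^{\otimes m})$ guaranteed by \eqref{eq:sections}, and to use it to build a cyclic covering of $Y$, then push forward a suitable Hodge module along $f$. Concretely, I would first replace $Y$ by a log resolution so that the divisor $\Div(s) \subseteq Y$ has simple normal crossings support; passing to such a resolution does not affect the hypothesis, since $B = \omYX \otimes \fu L^{-1}$ and pushing forward $\omega$ under a birational morphism is well-behaved. The section $s$ together with the line bundle $B$ then determines an $m$-fold cyclic branched covering $\pi \colon Y' \to Y$, with $Y'$ having at worst quotient singularities; resolving, we obtain a smooth projective $\bar Y'$ with a generically finite map to $Y$, and a natural inclusion of $\fu L^{-1}$ (or a fractional version, $\fu L^{-1} \otimes \mathcal{O}_Y(-\text{fractional part of }\tfrac{1}{m}\Div(s))$) as a direct summand of $\pi_* \mathcal{O}_{Y'}$, which is the classical Esnault--Viehweg package for cyclic covers.

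Next I would take the constant Hodge module $\QQ^H_{\bar Y'}[\dim \bar Y']$ on $\bar Y'$, or rather its pushforward to $Y$ and then to $X$: applying Saito's direct image functor for the composite map $g \colon \bar Y' \to Y \xrightarrow{f} X$, the decomposition theorem yields a polarizable Hodge module $M$ with strict support $X$ as a direct summand of $\mathcal{H}^{\dim Y - \dim X}(g_+ \QQ^H_{\bar Y'}[\dim \bar Y'])$ — the summand corresponding to the ``main'' component. By Saito's description of the lowest piece of the Hodge filtration (the analogue of the Grauert--Riemenschneider / Kollár package for Hodge modules), $F_0 \Mmod$ receives a copy of $f_*(\omega_{Y/X} \otimes (\text{the relevant summand of }\pi_*\mathcal{O}_{Y'}))$, which after the cyclic-cover bookkeeping produces the summand $L \otimes \fl \OY$ — this gives \ref{en:VZb}, once we define $\shG_0 := L \otimes \fl\OY$ sitting inside $F_0\Mmod = \gr_0^F \Mmod$. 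To build the full graded module $\shGb$, I would let $\shGb$ be the graded $\shA_X$-submodule of $\gr_\bullet^F \Mmod$ generated by $\shG_0$, i.e.\ apply the $\Sym\shT_X$-action (which on $\gr^F$ is the one coming from the symbol of the $\Dmod$-module structure) to $\shG_0$ and take the resulting coherent sub. Then \ref{en:VZd} and \ref{en:VZe} are immediate, with $F_k\Mmod = 0$ for $k<0$ a standard normalization for the pushforward of the constant Hodge module.

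The support statement \ref{en:VZa} is where the geometry of $S_f$ enters: I would argue that the characteristic variety of $M$ (equivalently, $\Supp \shG$ for $\shG = \mathrm{gr}^F \Mmod$ itself, and hence a fortiori for the submodule) is controlled by the characteristic variety of $g_+ \QQ^H_{\bar Y'}$, which by functoriality of the pushforward for $\Dmod$-modules is contained in the image under $\df$ of the conormal of (the strata of) $\Div(s)$ pushed into $T^*X$ along the diagram \eqref{eq:Japanese}; a cotangent vector $(x,\xi)$ can only appear if $\fu\xi$ fails to be nonvanishing at a point of the fiber, i.e.\ $(x,\xi) \in S_f$. This is really the same computation underlying \lemmaref{lem:Sf}, applied to the cover. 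Finally \ref{en:VZc}, torsion-freeness of each $\shG_k$ away from $D_f$, follows because over $X \setminus D_f$ the morphism $f$ is smooth, so $M$ restricts to (a summand of) a variation of Hodge structure there, each $\gr_k^F$ is locally free, and a coherent subsheaf of a locally free sheaf on a smooth variety is torsion-free.

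The main obstacle I anticipate is making \ref{en:VZb} precise: extracting exactly the summand $L \otimes \fl\OY$ of $F_0\Mmod$ requires carefully matching the Esnault--Viehweg decomposition of $\pi_*\mathcal{O}_{Y'}$ (with its twist by roundings of $\tfrac{j}{m}\Div(s)$) against Saito's formula for $F_0$ of a direct image, and then tracking this summand through the decomposition theorem applied to $g_+$ so that it lands in the strict-support-$X$ piece rather than in a summand supported on a proper subvariety. Relatedly, one must be a little careful that the $\Sym\shT_X$-submodule generated by $\shG_0$ does not accidentally acquire support outside $S_f$ — but since it sits inside $\gr^F_\bullet\Mmod$ whose support is already inside $S_f$, this is automatic, so the only genuine work is the $F_0$ identification.
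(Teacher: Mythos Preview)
Your overall framework — cyclic cover, resolution, pushforward of the constant Hodge module, decomposition by strict support — matches the paper's, and your treatment of \ref{en:VZb}, \ref{en:VZd}, \ref{en:VZe} is essentially correct. But there is a genuine gap in your arguments for \ref{en:VZa} and \ref{en:VZc}, and it stems from a single conceptual error: you repeatedly conflate the singularities of $f$ with those of the composite $g \colon \bar Y' \to X$ (the paper calls this map $h$).

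For \ref{en:VZa} you write that the characteristic variety of $\gr^F_\bullet \Mmod$ is already contained in $S_f$, so the submodule inherits this. This is false. The characteristic variety of $M$ lies in $S_g$, not $S_f$; the covering and its resolution introduce new singular fibers, so $S_g$ is typically strictly larger than $S_f$. The paper stresses exactly this point: the existence of $M$ alone says nothing about $f$. Consequently, defining $\shGb$ as the $\shA_X$-submodule generated by $\shG_0$ gives no a priori control beyond $\Supp \shG \subseteq S_g$. The paper's key additional idea is to construct a morphism of complexes
\[
L \otimes R^0 f_* C_{Y,\bullet} \longrightarrow \gr^F_\bullet \Mmod,
\]
built from the natural maps $\varphi^*(B^{-1}\otimes \Omega_Y^k) \to \Omega_Z^k$, and to take $\shGb$ to be its \emph{image}. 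Since $C_{Y,\bullet}$ is a complex on $Y$ constructed from $\shT_Y \to f^*\shT_X$, its associated sheaf on $T^*X \times_X Y$ is supported on $df^{-1}(0)$; hence the image $\shGb$ has support in $S_f$. This is the step your proposal is missing.

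The same confusion undermines your argument for \ref{en:VZc}: you claim $M$ restricts to a variation of Hodge structure on $X \setminus D_f$, but it only does so on $X \setminus D_g$, which is smaller. The paper's proof of torsion-freeness is genuinely different: once one knows $\Supp \shG \subseteq S_f$, over $X \setminus D_f$ the sheaf $\shG$ is supported on the zero section, and one then uses Grothendieck duality together with the Cohen--Macaulay property of $\gr^F\!\Mmod$ (a theorem of Saito) to bound the codimension of the supports of $R^i\shHom(\shG_k,\omega_X)$.
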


Roughly speaking, $\shGb$ is constructed by applying results from the theory of Hodge
modules to a resolution of singularities of a branched covering of $Y$. This type of
construction was invented by Viehweg and Zuo, but it becomes both simpler and more
flexible through the use of Hodge modules. An introduction to Hodge modules that
covers all the results we need here, with references to the original work of Saito, can be found in \cite{sanya}.

Despite the technical advantages of working with Hodge modules, the proof of
Viehweg's conjecture in the general case works more naturally in the context of 
Higgs sheaves (with logarithmic poles along $D_f$).  Since we do not have any control
over the zero locus of the section
in \eqref{eq:sections}, we do not attempt to construct such an object directly from
the branched covering. Instead, we use the local properties of Hodge modules to
construct a suitable Higgs sheaf from the graded module $\shGb$, at least on
some birational model of $X$. More precisely, we can perform finitely many blowups
with smooth centers to assume that the $f$-singular locus $D_f$ is a divisor, and moreover to 
put ourselves in the following situation:
\begin{equation} \label{eq:NCD}
	\text{The singularities of $M$ occur along a normal crossing 
	divisor $D \supseteq D_f$.}
\end{equation}
Concretely, this means that the restriction of $M$ to the open subset $X \setminus D$
is a polarizable variation of Hodge structure. We use this fact to construct from
$\shGb$ an $\OX$-module $\shFb$ with the structure of a graded Higgs sheaf.

\begin{theorem} \label{thm:Higgs}
Let $f \colon Y \to X$ be a surjective morphism with connected fibers between two
smooth projective varieties. Assuming \eqref{eq:sections} and \eqref{eq:NCD}, one
can find an $\OX$-module $\shFb$ with the following properties:
\begin{aenumerate}
\item \label{en:Higgsb}
One has $L(-D_f) \subseteq \shF_0 \subseteq L$.
\item \label{en:Higgsc}
Each $\shF_k$ is a reflexive coherent sheaf on $X$.
\item \label{en:Higgsd}
There exists a (graded logarithmic) Higgs bundle $\shEb$ with Higgs field 
\[
	\theta \colon \shEb \to \OmX^1(\log D) \tensor \shE_{\bullet+1}, 
\]
such that $\shFb \subseteq \shEb$ and $\theta(\shFb) \subseteq \OmX^1(\log D_f)
\tensor \shF_{\bullet+1}$.
\item \label{en:Higgse}
The pair $(\shEb, \theta)$ comes from a polarizable variation of Hodge structure on
$X \setminus D$ with $\shE_k = 0$ for $k < 0$.
\end{aenumerate}
\end{theorem}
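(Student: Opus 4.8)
The plan is to construct the logarithmic Higgs bundle $(\shEb, \theta)$ directly out of the polarizable variation of Hodge structure that $M$ carries on $U = X \setminus D$, and then to cut out $\shFb$ from the graded $\shA_X$-module $\shGb$ of \theoremref{thm:VZ}. The subtlety is that $\shGb$ lives inside $\gr_{\bullet}^F \Mmod$, which agrees with $\shEb$ only over $U$; matching them up across $D$, and doing so in a way that confines the poles of the Higgs field to $D_f$ rather than to the whole of $D$, is a local question about the interplay between the Hodge filtration and the $V$-filtration of $\Mmod$.

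For the first part, let $(\shV, \nabla)$ be a Deligne canonical extension across $D$ of the flat bundle underlying this variation, chosen with residue eigenvalues in the half-open interval matching Saito's convention for the lowest step of $F_{\bullet} \Mmod$. By Schmid's nilpotent orbit theorem the Hodge filtration extends to a filtration of $\shV$ by holomorphic subbundles $F_p \shV$, and Griffiths transversality reads $\nabla(F_p \shV) \subseteq \OmX^1(\log D) \tensor F_{p-1} \shV$. Writing $\shE_k := \gr_k^F \shV$ after the index shift that normalizes the grading as in \theoremref{thm:VZ} (so that $\shE_k = 0$ for $k < 0$, inherited from $F_k \Mmod = 0$ for $k < 0$, and so that the field raises degree by one), the maps induced by $\nabla$ assemble into a graded logarithmic Higgs field
\[
	\theta \colon \shEb \longrightarrow \OmX^1(\log D) \tensor \shE_{\bullet+1},
\]
with each $\shE_k$ locally free; this gives property \ref{en:Higgse} and the part of \ref{en:Higgsd} asserting the existence of $(\shEb, \theta)$.

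It remains to produce $\shFb \subseteq \shEb$ with $L(-D_f) \subseteq \shF_0 \subseteq L$ and $\theta(\shFb) \subseteq \OmX^1(\log D_f) \tensor \shF_{\bullet+1}$. Over $U$ one has $\gr_{\bullet}^F \Mmod = \shEb|_U$, so $\shGb$ determines a canonical inclusion $\shGb|_U \subseteq \shEb|_U$, and the problem is to extend it across $D$ while controlling the poles of the resulting field. This is local: near a point of $D$, fix local equations for the branches of $D$ through it, together with the associated Kashiwara--Malgrange $V$-filtrations of $\Mmod$. Because $M$ has strict support $X$, the module $\Mmod$ is the minimal extension, hence generated over $\Dmod_X$ by $V^{>-1} \Mmod$; feeding this into Saito's formulas for $F_{\bullet} \Mmod$ in terms of the $V$-filtration and of the $\del_t$'s, one identifies $\gr_{\bullet}^F \Mmod$ near $D$ with an explicit modification of $\gr_{\bullet}^F \shV = \shEb$, a comparison differing from an isomorphism only by a twist along $D_f$. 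Pushing $\shGb$ through this comparison and replacing each piece by its reflexive hull inside the locally free $\shE_k$ -- an operation that changes nothing over $U$ or in codimension one -- yields reflexive coherent sheaves $\shF_k \subseteq \shE_k$, which is \ref{en:Higgsc}. The decisive point is \theoremref{thm:VZ}\ref{en:VZa} together with \lemmaref{lem:Sf}: since $\Supp \shG \subseteq S_f$ and the image of $S_f$ in $X$ is $D_f$, over $X \setminus D_f$ the sheaf $\shG$ is supported on the zero section of $T^{\ast} X$, so along the branches of $D$ not contained in $D_f$ the normal vector fields act nilpotently on $\shGb$; this forces the residues of $\theta$ to vanish on $\shFb$ along those branches, so that $\theta(\shFb) \subseteq \OmX^1(\log D_f) \tensor \shF_{\bullet+1}$ rather than merely into $\OmX^1(\log D) \tensor \shF_{\bullet+1}$. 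Finally, since the fibers of $f$ are connected, \theoremref{thm:VZ}\ref{en:VZb} gives $\shG_0 \simeq L$; because the comparison above alters $\gr_0^F \Mmod$ by at most $\OX(D_f)$, taking $\shF_0$ to be the largest subsheaf of $L$ landing in $\shE_0$ squeezes it between $L(-D_f)$ and $L$, which is \ref{en:Higgsb}.

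The main obstacle is exactly this local step: following, branch by branch of $D$, how the interaction between the Hodge filtration and the $V$-filtration transports $\shGb$ into $\shEb$, and checking that the failure of $\theta|_{\shFb}$ to be logarithmic is concentrated along $D_f$ and of the correct (first) order. This is where the nilpotency coming from $\Supp \shG \subseteq S_f$ must be combined with Saito's strictness and compatibility theorems for the pair $(F_{\bullet}, V_{\bullet})$ of a pure Hodge module, and with the right choice of canonical extension $\shV$. By comparison, the construction of $(\shEb, \theta)$ and the formal manipulations with reflexive hulls and twists are routine.
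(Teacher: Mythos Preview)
Your outline is essentially the paper's own argument, and you have correctly identified the two load-bearing ingredients: the nilpotency of the normal derivatives on $\shGb$ along branches of $D$ not in $D_f$ (coming from $\Supp \shG \subseteq S_f$), and Saito's compatibility between the Hodge and $V$-filtrations. One point of your sketch is imprecise, however, and worth correcting because it affects how you actually define $\shFb$.

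You describe the relationship between $\gr_{\bullet}^F \Mmod$ and $\shEb$ near $D$ as ``a comparison differing from an isomorphism only by a twist along $D_f$.'' This is not right: the discrepancy between the two occurs along all of $D$, not just $D_f$, and it is not a line-bundle twist. What the paper does instead is prove a global inclusion $\shEb \hookrightarrow \gr_{\bullet}^F \Mmod$, obtained from $\shVg \subseteq \Mmod$ together with the identity $\shVg \cap F_k \Mmod = F_k \shVg$. With this inclusion in hand, $\shF_k$ is defined simply as the reflexive hull $(\shG_k \cap \shE_k)^{\vee\vee}$ inside $\shE_k$; no transport or comparison map is needed. The argument that $\theta(\shFb) \subseteq \OmX^1(\log D_f) \tensor \shF_{\bullet+1}$ then becomes the statement that for a section $s$ of $\shF_k$ near a component of $D$ outside $D_f$, one has $\partial_n s \in V_n^0 \gr_{k+1}^F \Mmod = \shE_{k+1}$ rather than merely $V_n^{-1} \gr_{k+1}^F \Mmod$; this is deduced by iterating Saito's isomorphism $\partial_t \colon F_k \gr_V^{\alpha} \Mmod \xrightarrow{\sim} F_{k+1} \gr_V^{\alpha-1} \Mmod$ for $\alpha < 0$, using that $\partial_n^{\ell}(\partial_n s) = 0$ to climb from $\alpha = -1$ up to $\alpha = 0$. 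Similarly, the bound $L(-D_f) \subseteq \shF_0$ is obtained by the same mechanism along components outside $D_f$, and by the trivial observation $x_n \cdot F_0 V_n^{>-1} \Mmod \subseteq F_0 V_n^0 \Mmod$ along components of $D_f$. Your phrase ``forces the residues of $\theta$ to vanish'' is the correct intuition, but the actual mechanism is this climbing argument in the $V$-filtration, not a direct residue computation.
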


Both theorems will be proved in the remainder of this chapter.

\subsection{Constructing the Hodge module}\label{scn:HM}

From now on, we assume that the line bundle $B = \omYX \tensor \fu L^{-1}$ satisfies
the hypothesis in \eqref{eq:sections}.
For the sake of convenience, let $m \geq 1$ be the smallest integer with the property
that there is a nontrivial global section $s \in H^0 \bigl( Y, B^{\tensor m} \bigr)$.
Such a section defines a branched covering $\pi \colon Y_m \to Y$ of degree $m$,
unramified outside the divisor $Z(s)$; see \cite[\S3]{EV} for details. Since $m$ is
minimal, $Y_m$ is irreducible; let $\mu \colon Z \to Y_m$ be a resolution of
singularities that is an isomorphism over the complement of $Z(s)$, and define
$\varphi = \pi \circ \mu$ and $h = f \circ \varphi$. The following commutative
diagram shows all the relevant morphisms:
\begin{equation} \label{eq:geometry}
\begin{tikzcd}
Z \rar{\mu} \arrow[bend right=20]{drr}{h} \arrow[bend left=40]{rr}{\varphi} 
		& Y_m \rar{\pi} & Y \dar{f} \\
 & & X
\end{tikzcd}
\end{equation}
To simplify the notation, set $n = \dim X$ and $d = \dim Y = \dim Z$. Let $\shH^0 \hl
\QQ_Z^H \decal{d} \in \HM(X, d)$ be the polarizable Hodge module obtained by taking
the direct image of the
constant Hodge module on $Z$; restricted to the smooth locus of $h$, this is just the
polarizable variation of Hodge structure on the middle cohomology of the fibers. Let
$M \in \HM_X(X, d)$ be the summand with strict support $X$ in the decomposition of
$\shH^0 \hl \QQ_Z^H \decal{d}$ by strict support \cite[\S10]{sanya}. Let $\Mmod$
denote the underlying
regular holonomic left $\Dmod_X$-module, and $F_{\bullet} \Mmod$ its Hodge
filtration. Since $F_{\bullet} \Mmod$ is a good filtration, the associated graded
$\shA_X$-module
\[
	\gr_{\bullet}^F \! \Mmod = \bigoplus_{k \in \ZZ} \gr_k^F \! \Mmod
\]
is coherent over $\shA_X = \Sym \shT_X$; for simplicity, we denote the corresponding
coherent sheaf on the cotangent bundle by the symbol $\grFM$.

One has the following more concrete description of $\grFMb$. On $Z$, consider the
complex of graded $\hu \shA_X$-modules
\[
	C_{Z, \bullet} = \left\lbrack 
	\hu \shA_X^{\bullet-n} \tensor \bigwedge^d \shT_Z \to
	\hu \shA_X^{\bullet-n+1} \tensor \bigwedge^{d-1} \shT_Z \to \dotsb \to
	\hu \shA_X^{\bullet-n+d} \right\rbrack,
\]
placed in cohomological degrees $-d, \dotsc, 0$; the differential in the complex is
induced by the natural morphism $\shT_Z \to \hu \shT_X$.

\begin{proposition} \label{prop:Laumon}
In the category of graded $\shA_X$-modules, $\grFMb$ is a direct summand of
$R^0 \hl \bigl( \omZX \tensor C_{Z, \bullet} \bigr)$.
\end{proposition}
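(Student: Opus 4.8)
The plan is to identify $\grFMb$ as a direct summand of the zeroth direct image $R^0\hl(\omZX \tensor C_{Z,\bullet})$ by tracing through Saito's formula for the associated graded of the Hodge filtration under a projective pushforward, applied to the constant Hodge module $\QQ_Z^H\decal{d}$. The starting point is the decomposition theorem: $\shH^0\hl\QQ_Z^H\decal{d}$ splits by strict support, and $M$ is the summand with strict support $X$. Since passing to a direct summand is exact and compatible with the grading, it suffices to show that $\gr_\bullet^F$ of the full pushforward $\shH^0\hl\QQ_Z^H\decal{d}$ is a direct summand of $R^0\hl(\omZX\tensor C_{Z,\bullet})$; then $\grFMb$, being the summand corresponding to $M$, is a fortiori a summand of the same object.

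The key computation is Saito's description of the de Rham complex and the filtered direct image. For the constant Hodge module on the smooth $d$-fold $Z$, the underlying filtered $\Dmod_Z$-module is $(\shO_Z, F_\bullet)$ with $\gr_0^F = \shO_Z$ and all other graded pieces zero. The filtered de Rham complex $\DR_Z(\shO_Z)$, suitably shifted and twisted to compute the pushforward, has associated graded (after the standard identification using the relative cotangent/tangent sheaves and the symmetric algebra $\hu\shA_X$) precisely the Koszul-type complex $\omZX\tensor C_{Z,\bullet}$: the terms $\hu\shA_X^{\bullet-n+j}\tensor\bigwedge^{d-j}\shT_Z$ arise as $\gr^F$ of $\Dmod_{X\leftarrow Z}\Ltensor_{\Dmod_Z}\Omega_Z^{d-j}\decal{d-j}$ after the change of rings along $h$, with $\omZX$ accounting for the relative dualizing sheaf in the transfer module, and the differential being induced by $\shT_Z\to\hu\shT_X$ exactly as stated. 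Then $\gr_k^F$ of $\cohH^0 h_+$ of this filtered complex is computed by $R^0\hl$ of the graded piece; this is the content of Saito's theorem that taking $\gr^F$ commutes with projective pushforward for Hodge modules (strictness of the filtered direct image), which is precisely what I would invoke rather than reprove.

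The main obstacle is bookkeeping: getting the shifts, the placement of the complex in cohomological degrees $-d,\dots,0$, and the twist by $\omZX$ all consistent with the conventions for $\HM(X,d)$ and for the Hodge filtration on a pushforward. One must be careful that $\hl$ here is underived on the individual graded sheaves but that $\cohH^0$ is extracted from the derived pushforward of the complex; the identification $\gr_k^F\cohH^0 h_+(\shO_Z,F)\simeq R^0\hl(\gr^F\text{ of the filtered transfer complex})$ relies on Saito's strictness result, and it is exactly strictness (plus the fact that $\cohH^j=0$ for $j\neq 0$ after decomposition, or at least that the $\cohH^0$ part is a summand) that lets one replace the hypercohomology of a complex by $R^0$ of its top piece. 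I would cite \cite{Saito-MHP} or the survey \cite{sanya} for the precise statement of the filtered pushforward formula and its strictness, verify the identification of graded terms with $C_{Z,\bullet}$ by a local coordinate computation (the relative Spencer/Koszul complex), and then conclude by restricting to the strict-support summand $M$.
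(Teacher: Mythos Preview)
Your proposal is correct and follows essentially the same route as the paper: invoke Saito's formula for the associated graded of a filtered direct image applied to $(\OZ,F_\bullet\OZ)$, identify the resulting complex with $\omZX\tensor C_{Z,\bullet}$ via the Koszul/Spencer resolution of $\OZ$ over $\shA_Z$, use strictness (from projectivity of $h$) to split $\derR\hl$ and extract $R^0$, and then pass to the strict-support-$X$ summand $M$. The paper cites \cite[Theorem~2.9]{mhmgv} for the direct-image formula and makes the Koszul resolution step explicit, but the architecture is the same as yours.
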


\begin{proof}
This is a special case of the following more general result: the complex 
\begin{equation} \label{eq:complex}
	\derR \hl \bigl( \omZX \tensor C_{Z, \bullet} \bigr)
\end{equation}
splits in the derived category of graded $\shA_X$-modules, and its $i$-th
cohomology module computes the associated graded of the Hodge module $\shH^i 
\hl \QQ_Z^H \decal{d}$. The proof is an application of several results by Saito. The
underlying filtered $\Dmod$-module of the trivial Hodge module is $(\OZ, F_{\bullet}
\OZ)$, where the filtration is such that $\gr_k^F \OZ = 0$ for $k \neq 0$. As shown
in \cite[Theorem~2.9]{mhmgv}, the direct image of $(\OZ, F_{\bullet} \OZ)$ in the
derived category of filtered $\Dmod$-modules satisfies
\begin{equation} \label{eq:Laumon}
	\gr_{\bullet}^F \hp(\OZ, F_{\bullet} \OZ) \simeq
		\derR \hl \left( \omZX \tensor_{\OZ} \gr_{\bullet+d-n}^F \! \OZ 
		\Ltensor_{\shA_Z} \hu \shA_X \right).
\end{equation}
Since the morphism $h \colon Z \to X$ is projective, the complex $\hp(\OZ,
F_{\bullet} \OZ)$ is strict and splits in the derived category \cite[\S16]{sanya};
the same is therefore true for the complex of
graded $\shA_X$-modules on the left-hand side of \eqref{eq:Laumon}. To conclude the
proof, we only have to show that the complex on the right-hand side of
\eqref{eq:Laumon} is quasi-isomorphic to the one in \eqref{eq:complex}. Now the
associated graded of the constant Hodge module is $\OZ$, in degree zero, with the
trivial action by $\shA_Z$. It is naturally resolved by the complex of graded
$\shA_Z$-modules
\begin{equation} \label{eq:resolution}
	\left\lbrack
	\shA_Z^{\bullet-d} \tensor \bigwedge^d \shT_Z \to
	\shA_Z^{\bullet-d+1} \tensor \bigwedge^{d-1} \shT_Z \to \dotsb \to
	\shA_Z^{\bullet} \right\rbrack,
\end{equation}
placed in cohomological degrees $-d, \dotsc, 0$. After shifting the grading by $d-n$
and tensoring over $\shA_Z$ by $\hu \shA_X$, we obtain the desired result.
\end{proof}

The lemma gives some information about the individual $\OX$-modules $\gr_k^F \Mmod$.

\begin{corollary} \label{cor:shGMmin}
One has $\gr_k^F \Mmod = 0$ for $k < n-d$, whereas
$\gr_{n-d}^F \Mmod \simeq \hl \omZX$.
\end{corollary}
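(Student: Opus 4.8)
The plan is to read off both statements directly from the explicit description of $\grFMb$ provided by \propositionref{prop:Laumon}, by tracking the grading in the complex $C_{Z,\bullet}$. Recall that $\grFMb$ is a direct summand of $R^0 \hl(\omZX \tensor C_{Z,\bullet})$, where $C_{Z,\bullet}$ sits in cohomological degrees $-d,\dots,0$ with terms
\[
	C_{Z,\bullet}^{-j} = \hu\shA_X^{\bullet - n + (d - j)} \tensor \bigwedge^{j}\shT_Z, \qquad j = 0,\dots,d.
\]
First I would note that the symmetric-algebra factor $\hu\shA_X^{\bullet - n + (d-j)}$ vanishes in graded degree $k$ whenever $k - n + (d-j) < 0$, i.e. whenever $k < n - d + j$. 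Since $j \geq 0$, every term of the complex vanishes in graded degrees $k < n - d$, hence so does the hypercohomology $R^0\hl$ of the complex, and therefore so does its direct summand $\gr_k^F\Mmod$. This gives the first assertion.

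For the second assertion I would look at graded degree $k = n - d$ precisely. There the only nonvanishing term is the $j = 0$ term, namely $C_{Z,\bullet}^{0} = \hu\shA_X^{0} = \OZ$ in cohomological degree $0$, because for $j \geq 1$ one needs $k \geq n - d + j > n - d$. So in graded degree $n-d$ the complex $\omZX \tensor C_{Z,\bullet}$ reduces to the single sheaf $\omZX$ placed in cohomological degree $0$, and hence
\[
	R^0\hl\bigl(\omZX \tensor C_{Z,\bullet}\bigr)_{n-d} \simeq R^0 \hl \omZX = \hl\omZX.
\]
It remains to argue that the summand $\grFMb$ captures all of this, i.e. that $\gr_{n-d}^F\Mmod \simeq \hl\omZX$ and not merely a summand of it. For this I would invoke the decomposition by strict support: the complement summand of $M$ inside $\shH^0\hl\QQ_Z^H\decal{d}$ is a sum of Hodge modules with strict support strictly smaller than $X$, so its lowest-degree graded piece is supported on a proper subvariety; on the other hand, by \cite[Theorem~2.9]{mhmgv} (or the general splitting in \propositionref{prop:Laumon}) the lowest graded piece of $\gr_\bullet^F$ of the \emph{whole} direct image in degree $n-d$ is exactly $\hl\omZX$, which is torsion-free of full support when $h$ is surjective. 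Comparing, the smaller-support summand must contribute $0$ in degree $n-d$, so $\gr_{n-d}^F\Mmod \simeq \hl\omZX$.

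The main obstacle I expect is this last bookkeeping step — separating the strict-support summand $M$ from the rest at the bottom of the Hodge filtration. The degree-count giving vanishing for $k < n-d$ is immediate, but to pin down $\gr_{n-d}^F\Mmod$ on the nose one must know that the other strict-support summands are already zero in that degree. The cleanest way is probably the support argument just sketched (they are supported in lower dimension, while $\hl\omZX$ is torsion-free), using that $h$ is surjective so that the generic rank of $\hl\omZX$ is positive; alternatively one can argue at the generic point of $X$, where $M$ is the full variation of Hodge structure on the middle cohomology of the fiber of $h$ and $\gr_{n-d}^F$ of it is the bottom Hodge bundle $\hl\omZX$ by classical Hodge theory. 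Either way the content is mild, and no genuinely new input beyond \propositionref{prop:Laumon} is needed.
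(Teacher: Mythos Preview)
Your proposal is correct and follows essentially the same route as the paper: the vanishing for $k<n-d$ comes from the grading on $C_{Z,\bullet}$, and in degree $n-d$ you identify $R^0\hl(\omZX\tensor C_{Z,\bullet})_{n-d}\simeq\hl\omZX$ and then kill the complementary strict-support summand $M'$ by observing that $F_{n-d}\Mmod'$ is supported on a proper subvariety while $\hl\omZX$ is torsion-free. This is exactly the paper's argument; the paper also notes that this last step is a special case of a general result of Saito \cite[Proposition~2.6]{Saito-KC}.
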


\begin{proof}
The first assertion is clear because $C_{Z,k} = 0$ for $k < n-d$. To prove the second
assertion, recall that we have a canonical decomposition
\[
	\shH^0 \hl \QQ_Z^H \decal{d} \simeq M \oplus M',
\]
where $M$ has strict support $X$, and $M'$ is supported in a union of proper
subvarieties. \propositionref{prop:Laumon} shows that
\[
	F_{n-d} \Mmod \oplus F_{n-d} \Mmod' 
		\simeq R^0 \hl \bigl( \omZX \tensor C_{Z, n-d} \bigr) \simeq \hl \omZX.
\]
But now $F_{n-d} \Mmod'$ is supported in a union of proper subvarieties, whereas
$\hl\omZX$ is torsion-free; the conclusion is that $F_{n-d} \Mmod' = 0$. This is a
special case of a much more general result by Saito \cite[Proposition~2.6]{Saito-KC}.
\end{proof}

The complex $C_{Z, \bullet}$ is closely related to the set of singular cotangent
vectors $S_h \subseteq T^{\ast} X$ of the morphism $h \colon Z \to X$. Recall the
following notation:
\begin{equation} \label{eq:diag-dh}
\begin{tikzcd}
Z \dar{h} & T^{\ast} X \times_X Z \lar[swap]{p_2} \dar{p_1} \rar{\mathit{dh}} & T^{\ast} Z \\
X & T^{\ast} X \lar[swap]{p}
\end{tikzcd}
\end{equation}
Let us denote by $C_Z$ the complex of coherent sheaves on $T^{\ast} X \times_X Z$
associated with the complex of graded $\hu \shA_X$-modules $C_{Z, \bullet}$. 

\begin{proposition} \label{prop:support}
The support of $C_Z$ is equal to $\mathit{dh}^{-1}(0) \subseteq T^{\ast} X \times_X Z$.
\end{proposition}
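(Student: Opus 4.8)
The plan is to work locally on $Z$ and compute the support of the complex $C_Z$ fiberwise over points of $Z$, reducing the statement to a linear-algebra assertion about the Koszul-type differential. First I would fix a point $z \in Z$ with image $x = h(z) \in X$, and set $V = T_x X$, $W = T_z Z$, so that the differential $dh$ restricted to the fiber over $z$ is the linear map $dh_z \colon (T^\ast X)_x = V^\ast \to W^\ast$ dual to the tangent map $dh \colon W \to V$. A point $\xi \in V^\ast$ lies in $\mathit{dh}^{-1}(0)$ over $z$ precisely when $\xi$ annihilates the image of $dh \colon W \to V$. On the other hand, the stalk of $C_Z$ at a point $(\xi, z)$ of $T^\ast X \times_X Z$ is the Koszul-type complex obtained by specializing $C_{Z,\bullet}$: its terms are $\Sym^{\bullet - n}(V^\ast) \tensor \bigwedge^d W, \;\Sym^{\bullet-n+1}(V^\ast)\tensor \bigwedge^{d-1}W,\;\dotsc$ with differential induced by contraction against the element of $W^\ast \tensor V$ corresponding to $dh$, evaluated so that the relevant operator is ``wedge with $\xi$ transported through $dh$.'' So I would make precise that, after choosing local coordinates, $C_Z$ near $z$ is the Koszul complex on the sheaf $\hu\shA_X$ associated with the section $\df \in \hu\shA_X \tensor \Omega^1_Z$ — equivalently, the image of the canonical section of $\hu\shA_X^1 \tensor \shT_Z^\vee$ under $\shT_Z \to \hu\shT_X$.

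Next I would invoke the standard fact about Koszul complexes: the Koszul complex on a module associated with a section $s$ of a locally free sheaf $\mathcal{E}$ of rank $d$ (here $\mathcal{E} = \shT_Z$, $s = \df$) is exact away from the zero locus $Z(s)$, and its $0$-th cohomology (the cokernel of the last differential) is supported exactly on $Z(s)$. Concretely, over a point $(\xi, z)$, the last map in $C_Z$ is $\hu\shA_X^{\bullet - n + d - 1} \tensor \shT_Z \to \hu\shA_X^{\bullet - n + d}$, given by pairing with $\df$; its cokernel is nonzero at $(\xi, z)$ if and only if all the components of $\df$ — i.e. all the functions $p_2^\ast \eta \mapsto \fu$-type pairings — vanish at $(\xi, z)$, which is exactly the condition $(\xi, z) \in \mathit{dh}^{-1}(0)$. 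Thus $\Supp C_Z \subseteq \mathit{dh}^{-1}(0)$ because away from $\mathit{dh}^{-1}(0)$ some component of $\df$ is a unit and the Koszul complex is acyclic; conversely $\mathit{dh}^{-1}(0) \subseteq \Supp C_Z$ because there the top cohomology sheaf $\shH^0(C_Z)$ is nonzero. Since $\Supp C_Z = \bigcup_i \Supp \shH^i(C_Z)$ and $\shH^0(C_Z)$ already has support all of $\mathit{dh}^{-1}(0)$, the two inclusions give equality.

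I expect the main obstacle to be the bookkeeping needed to identify the differential of $C_{Z,\bullet}$ precisely with ``contraction against $\df$,'' including the twist by $\omZX$ that appears in $C_{Z,\bullet}$ once one assembles the full complex, and checking that this twist does not affect the support computation (it doesn't, since $\omZX$ is a line bundle). One must also be slightly careful that $C_Z$ lives on $T^\ast X \times_X Z$ rather than on $T^\ast Z$: the map $\df$ factors through $p_2$, and the zero locus $\mathit{dh}^{-1}(0)$ is by definition the preimage of the zero section of $T^\ast Z$ under $\df \colon T^\ast X \times_X Z \to T^\ast Z$, so the Koszul zero-locus description matches the stated set exactly. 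Once the local identification with a Koszul complex is in place, everything else is the standard depth/acyclicity argument, and no further input is required.
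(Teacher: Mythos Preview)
Your proposal is correct and follows essentially the same idea as the paper: both arguments rest on recognizing $C_Z$ as a Koszul complex and using the standard acyclicity criterion. The paper packages this slightly more conceptually, observing (from the proof of \propositionref{prop:Laumon}) that $C_Z$ is the pullback along $\mathit{dh}$ of the Koszul resolution of the zero section in $T^{\ast} Z$, whence $\Supp C_Z = \mathit{dh}^{-1}(\text{zero section})$; your fiberwise Koszul computation unwinds exactly this. One minor correction: the complex $C_{Z,\bullet}$ as defined carries no twist by $\omZX$ (that twist only enters when computing $\grFMb$), so that worry is moot.
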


\begin{proof}
The complex of graded $\shA_Z$-modules in \eqref{eq:resolution} is a resolution of
$\OZ$ as a graded $\shA_Z$-module, and so the associated complex of coherent sheaves
on $T^{\ast} Z$ is quasi-isomorphic to the structure sheaf of the zero section. The
proof of \propositionref{prop:Laumon} shows that $C_Z$ is the pullback of this complex via
the morphism $\mathit{dh}$ in the diagram in \eqref{eq:diag-dh};
its support must therefore be equal to $\mathit{dh}^{-1}(0)$.
\end{proof}

This result also implies the well-known fact that the characteristic variety of the
Hodge module $M$ is contained inside the set $S_h \subseteq T^{\ast} X$.

\begin{corollary}
The support of $\grFM$ is a union of irreducible components of $S_h$.
\end{corollary}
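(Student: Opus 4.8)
*The support of $\grFM$ is a union of irreducible components of $S_h$.*

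The plan is to combine the two preceding propositions: \propositionref{prop:Laumon} expresses $\grFMb$ as a direct summand of $R^0 \hl (\omZX \tensor C_{Z,\bullet})$, and \propositionref{prop:support} identifies the support of the complex $C_Z$ on $T^{\ast} X \times_X Z$ with $\mathit{dh}^{-1}(0)$. First I would transfer this support statement along the proper pushforward $p_1 \colon T^{\ast} X \times_X Z \to T^{\ast} X$: since $h$ (and hence $p_1$ restricted to the relevant locus) is proper, the support of $\derR p_{1,\ast}(\omZX \tensor C_Z)$ is contained in $p_1(\mathit{dh}^{-1}(0))$, which is precisely $S_h$ by definition. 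Taking cohomology sheaves, the support of $R^0 p_{1,\ast}(\omZX \tensor C_Z) = R^0 \hl(\omZX \tensor C_{Z,\bullet})$ (viewed as a sheaf on $T^{\ast} X$ via the equivalence with coherent $\shA_X$-modules) is therefore contained in $S_h$; and since $\grFM$ is a direct summand, $\Supp \grFM \subseteq S_h$. This already gives the familiar statement that the characteristic variety of $M$ is contained in $S_h$.

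To upgrade ``contained in'' to ``a union of irreducible components,'' I would invoke the standard fact that the characteristic variety of a holonomic $\Dmod_X$-module (here $\Mmod$, and $\grFM$ is supported exactly on $\Ch(\Mmod)$) is pure of dimension $n = \dim X$, being Lagrangian in $T^{\ast} X$. On the other hand, \lemmaref{lem:Sf} applied to $h$ gives $\dim S_h \le n$. So every irreducible component of $\Supp \grFM$ has dimension exactly $n$, while every irreducible component of $S_h$ also has dimension at most $n$; since $\Supp \grFM \subseteq S_h$, each $n$-dimensional component of $\Supp\grFM$ must coincide with an $n$-dimensional irreducible component of $S_h$. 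Hence $\Supp \grFM$ is a union of (top-dimensional, hence all) irreducible components of $S_h$.

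The only real subtlety — and the step I would be most careful about — is the dimension/purity input: one must cite that $\grFM$, being the graded of a good filtration on the holonomic module $\Mmod$, has support equal to the characteristic variety $\Ch(\Mmod)$, and that this is a (conical) Lagrangian subvariety of $T^{\ast} X$, hence equidimensional of dimension $n$. This is classical for holonomic $\Dmod$-modules, but it is worth stating explicitly since the conclusion ``union of irreducible components'' (rather than merely ``contained in'') rests entirely on matching dimensions on both sides. With that in hand the corollary is immediate from the two propositions and \lemmaref{lem:Sf}.
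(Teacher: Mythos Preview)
Your proof is correct and follows essentially the same route as the paper: use \propositionref{prop:Laumon} and \propositionref{prop:support} to get $\Supp \grFM \subseteq S_h$, then invoke holonomicity of $\Mmod$ to see that $\Supp \grFM = \Ch(\Mmod)$ is pure of dimension $n$, and combine with $\dim S_h \le n$ from \lemmaref{lem:Sf}. Your write-up is slightly more explicit about why purity of the characteristic variety forces each component to coincide with a component of $S_h$, but the argument is the same.
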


\begin{proof}
According to \propositionref{prop:Laumon}, one has
\[
\Supp	\grFM \subseteq \Supp R^0 p_{1\ast} \bigl( p_2^{\ast} \omZX \tensor C_Z \bigr),
\]
and because $\Supp C_Z$ is equal to $\mathit{dh}^{-1}(0)$, it follows that $\Supp
\grFM$ is contained in $S_h = p_1 \bigl( \mathit{dh}^{-1}(0) \bigr)$. Now
the support of $\grFM$ is by definition the characteristic variety of the regular
holonomic $\Dmod$-module $\Mmod$, and therefore of pure dimension $n = \dim X$. It
must therefore be a union of irreducible components of $S_h$, because we know from
\lemmaref{lem:Sf} that $\dim S_h \leq n$.
\end{proof}

Note that $S_h$ may very well have additional components of dimension $n$ that are
not accounted for by the Hodge module $M$. In any case, the existence of $M$ by
itself tells us nothing about the original morphism $f$.

\subsection{Constructing the graded module}

We now explain how to use the geometry of the branched covering in
\eqref{eq:geometry} to construct a graded $\shA_X$-submodule 
\[
	\shGb \subseteq \grFMb
\]
which, unlike $\grFMb$ itself, encodes information about the $f$-singular locus $D_f$
of the \emph{original} morphism $f \colon Y \to X$. 
In fact, the support of $\shGb$ will be contained in the set of singular cotangent vectors $S_f$;  the point is that $S_f$ is typically
much smaller than $S_h$, because both the covering and its resolution create
additional singular fibers. This construction will allow us to use positivity properties of Hodge modules towards the study of $D_f$.

By construction, the branched covering $Y_m$ is embedded into the total space of the
line bundle $B = \omYX \tensor \fu L^{-1}$, and so the pullback $\varphiu B$ has a
tautological section; the induced morphism $\varphiu B^{-1} \to \OZ$ is an
isomorphism over the complement of $Z(s)$. After composing with $\varphiu \OmY^k
\to \OmZ^k$, we obtain for every $k = 0, 1, \dotsc, d$ an injective morphism
\begin{equation} \label{eq:morphism}
	i_k \colon \varphiu \bigl( B^{-1} \tensor \OmY^k \bigr) \to \OmZ^k,
\end{equation}
that is actually an isomorphism over the complement of $Z(s)$. 

\begin{proposition} \label{prop:morphism}
There is a morphism of complexes of graded $\shA_X$-modules
\[
	\derR \fl \bigl( B^{-1} \tensor \omYX \tensor C_{Y, \bullet} \bigr) \to 
		\derR \hl \bigl( \omZX \tensor C_{Z, \bullet} \bigr),
\]
induced by the individual morphisms in \eqref{eq:morphism}.
\end{proposition}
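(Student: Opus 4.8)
The plan is to exploit the factorization $h = f \circ \varphi$ from \eqref{eq:geometry}. Since $\derR \hl = \derR \fl \circ \derR \varphil$, it suffices to produce a morphism of complexes of graded $\hu \shA_X$-modules on $Z$,
\[
	\varphiu \bigl( B^{-1} \tensor \omYX \tensor C_{Y, \bullet} \bigr) \longrightarrow \omZX \tensor C_{Z, \bullet},
\]
and then build the asserted map by composing the adjunction unit $B^{-1} \tensor \omYX \tensor C_{Y, \bullet} \to \derR \varphil \varphiu \bigl( B^{-1} \tensor \omYX \tensor C_{Y, \bullet} \bigr)$ (legitimate because every homogeneous component of $C_{Y, \bullet}$ is a locally free $\OY$-module of finite rank, so $\varphiu$ computes $\derL \varphiu$ on it), then $\derR \varphil$ of the morphism above, and finally $\derR \fl$.

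To identify the two sides termwise I would pass through the determinant duality $\bigwedge^k \shT_Y \simeq \OmY^{d-k} \tensor \omega_Y^{-1}$ on $Y$ and the analogous isomorphism on $Z$. Combined with $\omYX \tensor \omega_Y^{-1} = \fu \omega_X^{-1}$ and $\omZX \tensor \omega_Z^{-1} = \hu \omega_X^{-1}$, the component of the source in cohomological degree $-k$ becomes $\hu \shA_X^{\bullet - n + (d-k)} \tensor \hu \omega_X^{-1} \tensor \varphiu \bigl( B^{-1} \tensor \OmY^{d-k} \bigr)$, while the corresponding component of the target becomes $\hu \shA_X^{\bullet - n + (d-k)} \tensor \hu \omega_X^{-1} \tensor \OmZ^{d-k}$. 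Thus the sought-after map on the degree-$(-k)$ piece is simply $i_{d-k}$ from \eqref{eq:morphism} tensored with the identity on the common factor, and as $k$ runs from $0$ to $d$ this uses each of the morphisms in \eqref{eq:morphism} exactly once.

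It then remains to check that these termwise maps commute with the differentials. Under the determinant identification the natural differential of $C_{Y, \bullet}$ — contraction of one $\shT_Y$-factor against $\shT_Y \to \fu \shT_X$, raising the $\shA_X$-degree by one — turns into wedging with a pulled-back one-form from $X$, and likewise for $C_{Z, \bullet}$; the required squares then commute because pullback of differential forms is functorial and compatible with the wedge product, because $h = f \circ \varphi$ forces $\hu \OmX^1 \to \OmZ^1$ to factor through $\varphiu \OmY^1$, and because the tautological morphism $\varphiu B^{-1} \to \OZ$ is multiplication by a global section and so commutes with everything in sight. (Injectivity of $i_k$, which fails only along $Z(s)$, plays no role; only the existence of the maps matters. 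One could alternatively phrase this via \propositionref{prop:support}, recognizing $C_Y$ and $C_Z$ as $\mathit{df}^{\ast}$ and $\mathit{dh}^{\ast}$ of the Koszul resolutions of the zero sections and transporting along $d\varphi$, but the bookkeeping is the same.) The one place where care is needed is precisely this last verification: one must reconcile the Koszul-type model on the tangent side — with its grading shift by $d - n$ and its $\Sym \shT_X$-action — with the de Rham–type model on the cotangent side, keeping track of signs, so that the $i_{d-k}$ genuinely assemble into a morphism of complexes and not merely a degreewise collection of maps. Everything else is formal.
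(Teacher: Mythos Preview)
Your proposal is correct and follows essentially the same approach as the paper: reduce by adjunction to a map $\varphiu(B^{-1}\tensor\omYX\tensor C_{Y,\bullet})\to\omZX\tensor C_{Z,\bullet}$ on $Z$, identify the terms via the pairing $\bigwedge^k\shT_Y\simeq\OmY^{d-k}\tensor\omega_Y^{-1}$ so that the degreewise maps become the $i_k$ tensored with the identity, and then check compatibility with the differentials. The paper phrases this last step as the commutativity of a square built from $\shT_Y\to\fu\shT_X$ and $\shT_Z\to\hu\shT_X$, which is the dual of your ``wedging with a pulled-back one-form'' description; both are declared routine.
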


\begin{proof}
By adjunction, it suffices to construct a morphism of complexes
\[
	\varphiu \bigl( B^{-1} \tensor \omYX \tensor C_{Y, \bullet} \bigr) \to
		\omZX \tensor C_{Z, \bullet}.
\]
Using the fact that $\shT_Y$ and $\OmY^1$ are dual to each other, we have
\[
	B^{-1} \tensor \omYX \tensor C_{Y, \bullet}^{k-d} \simeq
		\fu \omX^{-1} \tensor \fu \shA_X^{\bullet-n-k} \tensor B^{-1} \tensor \OmY^k,
\]
which gives us a natural isomorphism
\[
	\varphiu \bigl( B^{-1} \tensor \omYX \tensor C_{Y, \bullet}^{k-d} \bigr) \simeq
		\hu \omX^{-1} \tensor \hu \shA_X^{\bullet-n-k} 
			\tensor \varphiu \bigl( B^{-1} \tensor \OmY^k \bigr).
\]
By composing with \eqref{eq:morphism}, we obtain an $\hu \shA_X$-linear morphism to
\[
	\hu \omX^{-1} \tensor \hu \shA_X^{\bullet-n-k} \tensor \OmZ^k \simeq
	\omZX \tensor C_{Z, \bullet}^{k-d}.
\]
It remains to verify that the individual morphisms are compatible with the
differentials in the two complexes. Since they are by construction $\hu
\shA_X$-linear, the problem is reduced to proving the commutativity of the diagram
\[
\begin{tikzcd}[column sep=large]
\varphiu \bigl( B^{-1} \tensor \OmY^k \bigr) \dar \rar{i_k} & \OmZ^k \dar \\
\varphiu \bigl( B^{-1} \tensor \OmY^{k+1} \bigr) \tensor \hu \shT_X 
		\rar{i_{k+1} \tensor \id} & \OmZ^{k+1} \tensor \hu \shT_X
\end{tikzcd}
\]
in which the vertical morphisms are induced respectively by $\shT_Y \to \fu \shT_X$
and $\shT_Z \to \hu \shT_X$. This is an easy exercise.
\end{proof}

Now we can construct a graded $\shA_X$-module $\shGb$ in the following manner. By
composing the morphism from \propositionref{prop:morphism} with the projection to $\grFMb$,
we obtain a morphism of graded $\shA_X$-modules
\begin{equation} \label{eq:image}		
	R^0 \fl \bigl( B^{-1} \tensor \omYX \tensor C_{Y, \bullet} \bigr) 
	\to R^0 \hl \bigl( \omZX \tensor C_{Z, \bullet} \bigr)
	\to \grFMb.
\end{equation}
We then define $\shGb \subseteq \grFMb$ as the image of this morphism, in
the category of graded $\shA_X$-modules. Remembering that $B^{-1} \tensor \omYX = \fu
L$, we see that $\shGb$ is also a quotient of the graded $\shA_X$-module $L \tensor
R^0 \fl C_{Y, \bullet}$. We can use this observation to prove that $\shGb$ is
nontrivial.

\begin{proposition} \label{prop:shGmin}
One has $\shG_k = 0$ for $k < n-d$, whereas $\shG_{n-d} \simeq L \tensor \fl \OY$.
\end{proposition}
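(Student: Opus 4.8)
The plan is to exploit the remark just before the statement — that $\shGb$ is a quotient of the graded $\shA_X$-module $L\tensor R^0\fl C_{Y,\bullet}$ — for an upper bound on each $\shG_k$, and the definition \eqref{eq:image} of $\shGb$ as the image of a map into $\grFMb$ for a matching lower bound.

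For the vanishing I would inspect $C_{Y,\bullet}$, the analogue of $C_{Z,\bullet}$ with $Y$ in place of $Z$: in internal degree $k$ its term in cohomological degree $-d+j$ is $\fu\Sym^{k-n+j}\shT_X\tensor\bigwedge^{d-j}\shT_Y$, which vanishes once $k-n+j<0$. For $k<n-d$ this holds for every $j\in\{0,\dots,d\}$, so $C_{Y,k}=0$, hence $R^0\fl C_{Y,k}=0$, hence $\shG_k=0$. For $k=n-d$ the only surviving term sits in cohomological degree $0$ and equals $\fu\shA_X^0=\OY$, so $R^0\fl C_{Y,n-d}=\fl\OY$ and $\shG_{n-d}$ is a quotient of $L\tensor\fl\OY$.

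To upgrade this surjection to an isomorphism I would unwind \eqref{eq:image} in degree $n-d$, where $\shG_{n-d}$ is the image of a composite
\[
R^0\fl\bigl(\fu L\tensor C_{Y,n-d}\bigr) \to R^0\hl\bigl(\omZX\tensor C_{Z,n-d}\bigr) \to \gr_{n-d}^F \Mmod ,
\]
whose source is $L\tensor\fl\OY$ (since $\fu L = B^{-1}\tensor\omYX$ and $C_{Y,n-d}=\OY$). By \corollaryref{cor:shGMmin} the target is $\hl\omZX$, and in degree $n-d$ the second arrow is an isomorphism: $R^0\hl(\omZX\tensor C_{Z,n-d})$ is the direct sum of $\gr_{n-d}^F \Mmod$ and $\gr_{n-d}^F \Mmod'$, and the latter is torsion, hence zero since $\hl\omZX$ is torsion-free — precisely the argument in the proof of \corollaryref{cor:shGMmin}. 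So the statement reduces to showing that the first arrow $L\tensor\fl\OY\to\hl\omZX$ is injective: granting this, the surjection $L\tensor\fl\OY\twoheadrightarrow\shG_{n-d}$ has torsion-free kernel of rank $0$, hence is an isomorphism.

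This injectivity is the one genuine point, and I would establish it by tracing the first arrow through the construction of \propositionref{prop:morphism}: in degree $n-d$ it factors as $R^0\fl$ of the adjunction unit $\fu L\to\varphil\varphiu\fu L\cong\fu L\tensor\varphil\OZ$, followed by $R^0\hl$ of the morphism $\tau\colon\hu L=\varphiu\fu L\to\omZX$, which is the injective morphism $i_d$ of \eqref{eq:morphism} up to a twist by a line bundle. The adjunction unit is injective because $\OY\to\varphil\OZ$ is split injective (the normalized trace of the generically finite surjective $\varphi$ splits it, $Y$ being integral and the characteristic $0$). Since the zeroth direct image functors $R^0\fl$ and $R^0\varphil$ are left exact, injectivity is preserved at each stage, so the composite is injective. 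The only delicate point is pure bookkeeping — confirming that unwinding ``by adjunction'' in the proof of \propositionref{prop:morphism}, restricted to this single internal degree, indeed produces the adjunction unit followed by $i_d$.
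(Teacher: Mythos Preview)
Your proof is correct and follows essentially the same route as the paper: both deduce the vanishing from $C_{Y,k}=0$ for $k<n-d$, identify $\shG_{n-d}$ as the image of $L\tensor\fl\OY \to \hl\omZX \cong \gr_{n-d}^F\Mmod$, and then prove this map is injective by factoring through the adjunction unit (using that $\OY\hookrightarrow\varphil\OZ$) and the injective $i_d$, together with left exactness of the pushforwards. Your write-up is slightly more explicit in places---e.g.\ invoking the trace to split $\OY\to\varphil\OZ$ and spelling out why the second arrow is an isomorphism---but the argument is the same.
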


\begin{proof}
We make use of \corollaryref{cor:shGMmin}. The first assertion is clear because $\gr_k^F \Mmod = 0$ for $k < n-d$. By
construction, $\shG_{n-d}$ is a quotient of the $\OX$-module 
\[
	L \tensor R^0 \fl C_{Y,n-d} \simeq L \tensor \fl \OY.
\]
Since we already know that $\gr_{n-d}^F \Mmod$ is isomorphic to $\hl \omZX$, it is
therefore enough to prove that the morphism in \propositionref{prop:morphism} is injective
in degree $n-d$. The morphism in question is
\[
	\fl \bigl( B^{-1} \tensor \omYX \bigr) \to \hl \omZX,
\]
and is induced by \eqref{eq:morphism} for $k = d$. Now $i_d \colon \varphiu
\bigl( B^{-1} \tensor \omY \bigr) \to \omZ$ is injective, and because $\OY$ injects
into $\varphil \OZ$, the adjoint morphism $B^{-1} \tensor \omY \to \varphil \omZ$
remains injective. The second assertion follows from this because $\fl$ is left-exact.
\end{proof}

It is also not hard to show that the support of the associated coherent sheaf $\shG$
on the cotangent bundle is contained in the set $S_f$.

\begin{proposition} \label{prop:SuppG}
We have $\Supp \shG \subseteq S_f$.
\end{proposition}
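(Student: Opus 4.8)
The plan is to use the fact, recorded in the construction of $\shGb$ just before \propositionref{prop:shGmin}, that $\shGb$ is a quotient of the graded $\shA_X$-module $L \tensor R^0 \fl C_{Y, \bullet}$, and then to bound the support exactly as in the argument for $\grFM$ following \propositionref{prop:support}, but now using the morphism $f$ in place of $h$. Since the functor sending a coherent graded $\shA_X$-module to the associated coherent sheaf on $T^{\ast} X$ is exact, $\shG$ is a quotient of the coherent sheaf attached to $L \tensor R^0 \fl C_{Y, \bullet}$, so it suffices to show that the support of \emph{that} sheaf lies in $S_f$.

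Next I would translate $R^0 \fl C_{Y, \bullet}$ back onto the cotangent bundle. In the notation of \eqref{eq:Japanese}, let $C_Y$ be the complex of coherent sheaves on $T^{\ast} X \times_X Y$ associated with the complex of graded $\fu \shA_X$-modules $C_{Y, \bullet}$, just as $C_Z$ was attached to $C_{Z, \bullet}$ before \propositionref{prop:support}. Because $p \colon T^{\ast} X \to X$ and $p_2 \colon T^{\ast} X \times_X Y \to Y$ are affine and $p \circ p_1 = f \circ p_2$, the relative-$\Spec$ equivalences over $X$ and over $Y$ are compatible with direct image, so that $\derR \fl C_{Y, \bullet}$ corresponds to $\derR p_{1 \ast} C_Y$; combined with the projection formula, this identifies the coherent sheaf attached to $L \tensor R^0 \fl C_{Y, \bullet}$ with $R^0 p_{1 \ast}(p_2^{\ast} \fu L \tensor C_Y)$. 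So it remains to see that this direct image is supported inside $S_f$.

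For the final step, $p_1$ is proper, being a base change of $f$, so the support of $R^0 p_{1 \ast}(p_2^{\ast} \fu L \tensor C_Y)$ is contained in $p_1(\Supp C_Y)$, the line bundle twist by $p_2^{\ast} \fu L$ playing no role. And the proof of \propositionref{prop:support} is purely formal: it identifies $C_Z$ with the pullback, via the bundle map $T^{\ast} X \times_X Z \to T^{\ast} Z$, of a complex of coherent sheaves quasi-isomorphic to the structure sheaf of the zero section, whence $\Supp C_Z$ is the preimage of that zero section. Running the identical argument for $f \colon Y \to X$ gives $\Supp C_Y = \df^{-1}(0) \subseteq T^{\ast} X \times_X Y$. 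Combining the inclusions, $\Supp \shG \subseteq p_1(\df^{-1}(0)) = S_f$, the last equality being the definition of $S_f$.

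Exactness of the module--sheaf correspondence, the projection formula, and the reuse of the proof of \propositionref{prop:support} are all routine. The only point demanding real care --- and hence the main obstacle --- is the compatibility invoked in the second paragraph: that under the equivalences of graded modules with coherent sheaves on $T^{\ast} X$ and on $T^{\ast} X \times_X Y$, the functor $\derR \fl$ corresponds to $\derR p_{1 \ast}$. One checks this by pushing forward to $X$, using $\derR p_{\ast} \circ \derR p_{1 \ast} = \derR (f \circ p_2)_{\ast} = \derR \fl \circ p_{2 \ast}$ (the last equality because $p_2$ is affine) together with the defining property $\pl \shG \simeq \shGb$ of the correspondence; it is worth spelling this out.
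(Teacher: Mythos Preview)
Your proof is correct and follows essentially the same approach as the paper: the paper's proof simply states that $\shG$ is a quotient of $\pu L \tensor R^0 p_{1\ast} C_Y$, invokes \propositionref{prop:support} to get $\Supp C_Y = \df^{-1}(0)$, and concludes. You have spelled out in more detail the translation from the graded-module statement ``$\shGb$ is a quotient of $L \tensor R^0 \fl C_{Y,\bullet}$'' to the cotangent-bundle statement, which the paper leaves implicit; note also that $p_2^{\ast} \fu L = p_1^{\ast} \pu L$, so your $R^0 p_{1\ast}(p_2^{\ast} \fu L \tensor C_Y)$ agrees with the paper's $\pu L \tensor R^0 p_{1\ast} C_Y$ by the projection formula.
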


\begin{proof}
By construction, $\shG$ is a quotient of the coherent sheaf $\pu L \tensor R^0 p_{1\ast}
C_Y$. But the complex $C_Y$ is supported on the set $\df^{-1}(0)$ by
\propositionref{prop:support}, and so the support of $\shG$ is contained in $S_f = p_1
\bigl( \df^{-1}(0) \bigr)$.
\end{proof}

\subsection{Additional properties}

Except in trivial cases, $\shGb$ is not itself the associated graded of a Hodge
module. Nevertheless, we shall see in this section that $\shGb$ inherits several good
properties from $\grFMb$.

\begin{lemma}
Every irreducible component of $\Supp \shG$ is the conormal variety of some
subvariety of $X$.
\end{lemma}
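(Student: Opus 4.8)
The statement asserts that every irreducible component of $\Supp\shG$ (inside $T^*X$) is a conormal variety $T_Z^*X$ for some subvariety $Z\subseteq X$. The key facts already available are: $\Supp\shG\subseteq S_f$ by \propositionref{prop:SuppG}, and by \lemmaref{lem:Sf} we know $\dim S_f\le\dim X$, with each $\dim X$-dimensional component of $S_f$ being a conormal variety. So the whole game is to show that $\Supp\shG$ has \emph{pure} dimension $n=\dim X$: once we know every component has dimension exactly $n$, being contained in $S_f$ forces each to be a top-dimensional component of $S_f$, hence a conormal variety by \lemmaref{lem:Sf}.

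The plan is therefore to establish purity of $\Supp\shG$ by exploiting the inclusion $\shGb\subseteq\grFMb$ from \theoremref{thm:VZ}\ref{en:VZd}. The coherent sheaf $\grFM$ on $T^*X$ is the characteristic cycle support of the regular holonomic $\Dmod_X$-module $\Mmod$, hence is of pure dimension $n$ (this is Gabber's theorem on the involutivity/purity of the characteristic variety, and is the fact already invoked in the proof of the Corollary after \propositionref{prop:support}). Now $\shG$ is a coherent subsheaf of $\grFM$, so $\Supp\shG$ is a closed subset of the pure-dimensional $\Supp\grFM$. That alone does not give purity of $\shG$'s support — a subsheaf can be supported on a lower-dimensional piece. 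The point I would make is that $\shG$ is not merely a submodule but a graded $\shA_X$-submodule, i.e. it corresponds to a $\CC^*$-equivariant (conical) coherent subsheaf of a module whose support is a union of conormal varieties; and any associated prime of a submodule is an associated prime of the ambient module. More precisely: every irreducible component of $\Supp\shG$ is the support of a quotient of $\shG$, hence of a subquotient of $\grFM$, hence is contained in the support of some composition factor of $\grFM$. Since $\grFM$ is pure of dimension $n$, every such component has dimension $n$.

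Let me spell out the last step, which is the one requiring care. Take an irreducible component $W$ of $\Supp\shG$, with generic point $\eta_W$. Localizing at $\eta_W$, the stalk $\shG_{\eta_W}$ is a nonzero module of finite length over the local ring $\shO_{T^*X,\eta_W}$, so it has an associated prime equal to the maximal ideal, meaning the prime ideal of $W$ is an associated prime of $\shG$ as an $\shO_{T^*X}$-module. Associated primes of a submodule are associated primes of the ambient module, so the ideal of $W$ is an associated prime of $\grFM$. But $\grFM$, being the characteristic module of a holonomic $\Dmod$-module, is pure of dimension $n$ — all its associated primes have coheight $n$ — and hence $\dim W=n$. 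Then $W$ is an $n$-dimensional irreducible subset of $S_f$, so by \lemmaref{lem:Sf} it is the conormal variety of a subvariety of $X$, which is exactly the claim.

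The main obstacle is really just making sure the purity input for $\grFM$ is correctly attributed and used: one must invoke that the characteristic variety of a regular holonomic $\Dmod_X$-module, equipped with its natural coherent structure from a good filtration, is of pure dimension $n$ (and, in Saito's setting for Hodge modules, one even has that $\grFMb$ is Cohen--Macaulay of the appropriate dimension, which gives purity immediately). Given this, the argument via associated primes is routine. One could also phrase it more geometrically: $\shG\subseteq\grFM$, so $\Supp\shG$ is a union of irreducible components of $\Supp\grFM$ together with possibly lower-dimensional embedded-type loci, but the grading/conicality rules out the bad loci because a $\CC^*$-fixed submodule of a module with no lower-dimensional associated primes again has no lower-dimensional associated primes. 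Either way, purity of $\grFM$ is the crux, and everything else is formal.
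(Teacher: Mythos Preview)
Your proof is correct and essentially identical to the paper's: both invoke Saito's theorem that $\grFM$ is Cohen--Macaulay of dimension $n$ on $T^{\ast}X$, hence unmixed, so every associated prime of the subsheaf $\shG\subseteq\grFM$ has dimension $n$, and then conclude via \lemmaref{lem:Sf} since $\Supp\shG\subseteq S_f$. One small clarification: Gabber's theorem alone gives only purity of the \emph{support} $\Supp\grFM$, not unmixedness of the module $\grFM$ itself (a general good filtration on a holonomic $\Dmod$-module can produce embedded primes in the associated graded), so Saito's Cohen--Macaulay result---which you do invoke---is the correct input, and your side remark about conicality ruling out lower-dimensional loci is unnecessary.
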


\begin{proof}
By a theorem of Saito \cite[\S29]{sanya}, $\grFM$ is a
Cohen-Macaulay sheaf on $T^{\ast} X$ of dimension $n = \dim X$;
in particular, it is unmixed, and every associated subvariety of $\grFM$ has
dimension $n$. This property is inherited by the subsheaf $\shG \subseteq
\grFM$; in particular, every irreducible component of $\Supp \shG$ is $n$-dimensional.
Since $\Supp \shG \subseteq S_f$, we conclude from \lemmaref{lem:Sf} that every such
component is the conormal variety of some subvariety of $X$.
\end{proof}

Recall our notation $D_f \subseteq X$ for the singular locus of the surjective
morphism $f \colon Y \to X$. Being part of a Hodge module, the coherent sheaves
$\gr_k^F \Mmod$ are locally free on the open subset $X \setminus D_h$ where $M$ is a
variation of Hodge structure. Surprisingly, the sheaves $\shG_k$ are torsion-free on
the much larger open set $X \setminus D_f$.

\begin{proposition}\label{prop:torsion}
For every $k \in \ZZ$, the sheaf $\shG_k$ is torsion-free on $X \setminus D_f$.
\end{proposition}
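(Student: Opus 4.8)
The plan is to derive the statement by playing off against each other two facts already in hand. First, $\shG$ is a subsheaf of $\grFM$, which by Saito's theorem (recalled in the proof of the preceding lemma) is a Cohen--Macaulay sheaf on $T^{\ast}X$ of pure dimension $n = \dim X$; hence $\grFM$ is unmixed, so every associated subvariety of $\shG$ is $n$-dimensional, and in particular $\shG$ contains no nonzero coherent subsheaf whose support has dimension $< n$. Second, by \propositionref{prop:SuppG} we have $\Supp \shG \subseteq S_f$, and a cotangent vector $(x,\xi)\in S_f$ with $\xi\neq 0$ can only occur over $x\in D_f$; therefore over the open set $U = X\setminus D_f$ the support of $\shG$ is exactly the zero section of $T^{\ast}U$. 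I claim these two properties together force each graded piece $\shG_k$ to be torsion-free on $U$.

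To see this I would argue by contradiction. Suppose $\shG_k|_U$ has a nonzero torsion section $t$ over some open $V\subseteq U$. Then $t$ is annihilated by a nonzerodivisor of $\OX|_V$, so $\overline{\Supp t}$ is a proper closed subset of $V$, of dimension $< n$ (as $V$ is smooth of pure dimension $n$). Consider the graded $\shA_X$-submodule $\shA_X\cdot t\subseteq\shG|_V$ generated by $t$: it is coherent, it is nonzero since it contains $t$, and in degree $k+j$ it equals $\shA_X^j\cdot t$, a subsheaf of $\shG_{k+j}$ still supported on $\overline{\Supp t}$. Thus, as a coherent sheaf on $T^{\ast}V$, the module $\shA_X\cdot t$ is supported over $\overline{\Supp t}$; but being a subsheaf of $\shG|_{T^{\ast}V}$, it is also supported on the zero section. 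Hence it is supported on the portion of the zero section lying over $\overline{\Supp t}$, a set of dimension $< n$, contradicting the unmixedness of $\shG$. Therefore no such $t$ exists and $\shG_k$ is torsion-free on $U$.

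The one place requiring a little care is the graded bookkeeping in this last step: one must check that a torsion section concentrated in a single degree $k$ genuinely produces a coherent subsheaf of $\grFM$ whose support is of dimension $< n$, and it is precisely here that the sharp containment $\Supp\shG\subseteq S_f$ is used -- over $U$ this pins the support of $\shG$ to the zero section, whereas over the much larger set complementary to $D_h$ one only knows that $\grFM$ itself (not an arbitrary small subsheaf) has pure dimension $n$. Everything else is formal; in particular, this argument uses neither the explicit description of $\shGb$ from \propositionref{prop:Laumon} nor any positivity input, only that $\shGb$ is a coherent $\shA_X$-submodule of $\grFM$ with support in $S_f$.
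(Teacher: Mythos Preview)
Your argument is correct and rests on exactly the same two inputs as the paper's proof: that $\shG$ sits inside the Cohen--Macaulay sheaf $\grFM$ of pure dimension $n$, and that over $X\setminus D_f$ the support of $\shG$ is contained in the zero section. The route, however, is genuinely different. You use the elementary fact $\mathrm{Ass}(\shG)\subseteq\mathrm{Ass}(\grFM)$ to conclude that $\shG$ has no nonzero subsheaf of support dimension $<n$, and then manufacture such a subsheaf from a hypothetical torsion section by taking the graded $\shA_X$-submodule it generates; the commutativity of $\shA_X$ ensures this submodule is still killed by the same nonzerodivisor, hence supported over a divisor in $X$, and the containment in the zero section then bounds its dimension on $T^{\ast}X$. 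The paper instead characterizes torsion-freeness via the codimension of $\Supp R^i\shHom(\shG_k,\omX)$, applies Grothendieck duality for $p\colon T^{\ast}X\to X$ to transfer this to a statement on $T^{\ast}X$, and then reads off the required bound from the long exact Ext sequence for $0\to\shG\to\grFM\to\grFM/\shG\to 0$, using that $\grFM$ is Cohen--Macaulay so its higher $\shExt$'s vanish. Your approach is shorter and avoids duality altogether; the paper's approach is more machinery-heavy but yields a sharper quantitative statement (codimension bounds on the Ext supports) along the way.
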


\begin{proof}
After replacing $X$ by the open subset $X \setminus D_f$, we may assume that
$\Supp \shG$ is contained in the zero section; the reason is that $\Supp \shG
\subseteq S_f$, and that $D_f$ is the image of $S_f$ minus the zero section. What
we need to prove is that $\shG_k$ is a torsion-free sheaf on $X$. This is equivalent
to saying that
\[
	\codim_X \Supp R^i \shHom_{\shO}(\shG_k, \omX) \geq i+1,
\]
for every $i \geq 1$; see for instance \cite[Lemma A.5]{PP}. We can compute the dual of $\shG_k$ directly by applying
Grothendieck duality to the projection $p \colon T^{\ast} X \to X$. Note first that 
\[
	\pl \shG \simeq \bigoplus_{k \in \ZZ} \shG_k
\]
is $\OX$-coherent because the support of $\shG$ is contained in the zero section; in
particular, $\shG_k = 0$ for $k \gg 0$. Since the relative dualizing sheaf is 
$\pu \omX^{-1}$, Grothendieck duality gives us
\[
	\derR \shHom_{\shO} \bigl( \shGb, \omX \bigr)
		\simeq \pl \derR \shHom_{\shO} \bigl( \shG, \shO \decal{n} \bigr),
\]
and so $\Supp R^i \shHom_{\shO}(\shG_k, \omX)$ is contained in the image under
$p$ of
\[
	\Supp R^i \shHom_{\shO} \bigl( \shG, \shO \decal{n} \bigr).
\]
As the zero section has codimension $n$, this reduces the problem to proving that 
\begin{equation} \label{eq:codim}
	\codim_{T^{\ast} X} \Supp R^i \shHom_{\shO} \bigl( \shG, \shO \decal{n} \bigr) 
		\geq n+i+1
\end{equation}
for every $i \geq 1$. On $T^{\ast} X$, we have a short exact sequence of coherent sheaves
\[
	0 \to \shG \to \grFM \to \grFM/\shG \to 0,
\]
from which we get a distinguished triangle
\[
	\derR \shHom_{\shO} \bigl( \grFM/\shG, \shO \decal{n} \bigr) \to 
	\derR \shHom_{\shO} \bigl( \grFM, \shO \decal{n} \bigr) 
		\to \derR \shHom_{\shO} \bigl( \shG, \shO \decal{n} \bigr) \to \dotsb
\]
All three complexes are concentrated in nonnegative degrees, because the original
sheaves are supported on a subset of codimension $\geq n$; moreover, $\derR
\shHom_{\shO} \bigl( \grFM, \shO \decal{n} \bigr)$ is a sheaf, because $\grFM$
is Cohen-Macaulay of dimension $n$. We conclude that
\[
	R^i \shHom_{\shO} \bigl( \shG, \shO \decal{n} \bigr) 
		\simeq R^{i+1} \shHom_{\shO} \bigl( \grFM/\shG, \shO \decal{n} \bigr) 
\]
for $i \geq 1$. But $\grFM/\shG$ is a sheaf, and so the support of the
right-hand side has codimension at least $n+i+1$; this implies the desired inequality
\eqref{eq:codim}.
\end{proof}

\subsection{Proof of Theorem~\ref*{thm:VZ}}

In this section, we prove \theoremref{thm:VZ} by putting together the results about
the graded $\shA_X$-module $\shGb$ that we have established so far. To resolve the
minor discrepancy in the indexing, we replace $M \in \HM_X(X,d)$ by its Tate twist
$M(d-n) \in \HM_X(X,2n-d)$; this leaves the underlying regular holonomic
$\Dmod$-module $\Mmod$ unchanged, but replaces the filtration $F_{\bullet} \Mmod$ by
the shift $F_{\bullet+n-d} \Mmod$. Similarly, we replace $\shGb$ by the shift
$\shG_{\bullet+n-d}$. Then the assertions in \ref{en:VZd} and \ref{en:VZe} hold by
construction, and the assertion in \ref{en:VZb} follows from
\corollaryref{cor:shGMmin}. In \propositionref{prop:SuppG}, we showed that $\Supp \shG
\subseteq S_f$, which proves \ref{en:VZa}. The remaining assertion in \ref{en:VZc}
has been established in \propositionref{prop:torsion}.

\begin{note}
At this point, the reader interested in starting with a technically simpler proof of
\theoremref{thm:hyperbolicity} when $X$ is not uniruled can move directly to
\parref{big-HM} and from there to \parref{scn:non-uniruled}.
\end{note}

\subsection{Constructing the Higgs bundle}

From now on, we assume in addition that the hypothesis in \eqref{eq:NCD} is
satisfied -- recall that we can always arrange this by blowing up $X$. We also assume
for simplicity that our morphism $f \colon Y \to X$ has connected fibers; this
implies that $\shG_0 \simeq L \tensor \fl \OY \simeq L$ is a line bundle.  Denote by
$\shV$ the polarizable variation of Hodge structure obtained by restricting
$M$ to the open subset $X \setminus D$; for the sake of convenience, we shall
consider the Hodge filtration as an increasing filtration $F_{\bullet} \shV$, with
the convention that $F_k \shV = F^{-k} \shV$. 

Let $\shVt$ be the canonical meromorphic extension \cite[Proposition~II.2.18]{Deligne} of
the flat bundle $(\shV, \nabla)$, and denote by $\shVt^{\geq \alpha}$ and
$\shVt^{>\alpha}$ Deligne's canonical lattices with eigenvalues contained in the
intervals $[\alpha, \alpha + 1)$ and $(\alpha, \alpha+1]$, respectively \cite[Proposition~I.5.4]{Deligne}. The flat connection on $\shV$ extends
uniquely to a logarithmic connection
\begin{equation} \label{eq:nabla}
	\nabla \colon \shVg \to \OmX^1(\log D) \tensor \shVg.
\end{equation}
As a consequence of Schmid's nilpotent orbit theorem, the Hodge filtration
$F_{\bullet} \shV$ extends to a filtration of $\shVg$ with locally free subquotients;
see \cite[(3.10.7)]{Saito-MHM} for a discussion of this point. The extension is given
by
\begin{equation} \label{eq:Hodge}
	F_k \shVg = \shVg \cap \jl F_k \shV,
\end{equation}
where $j \colon X \setminus D \into X$ is the inclusion. On the associated graded
with respect to the Hodge filtration, the connection then induces an $\OX$-linear operator
\begin{equation} \label{eq:Higgs}
	\theta \colon \gr_{\bullet}^F \shVg 
		\to \OmX^1(\log D) \tensor \gr_{\bullet+1}^F \shVg
\end{equation}
with the property that $\theta \wedge \theta = 0$. Setting 
\[
	\shEb = \gr_{\bullet}^F \shVg,
\]
we therefore obtain the desired (graded logarithmic) Higgs bundle $(\shEb, \theta)$. Since
$F_k \shV = 0$ for $k < 0$, it is clear that $\shE_k = 0$ in the same range.

It turns out that we can find a copy of $\shEb$ inside the associated graded
$\grFMb$ of the Hodge module $M$, and that the Higgs field $\theta$
can also be recovered from the $\shA_X$-module structure.  The connection on
$\shV$ induces on the meromorphic extension $\shVt$ the structure of a left
$\Dmod$-module. The formulas for the minimal extension in \cite[3.10]{Saito-MHM}
show that $\shVg \subseteq \Mmod \subseteq \shVt$.

\begin{lemma}
For every $k \in \ZZ$, we have an inclusion $\shE_k = \gr_k^F \shVg \subseteq \gr_k^F
\Mmod$.  
\end{lemma}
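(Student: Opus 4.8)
The plan is to reduce everything to the single identity $F_k \shVg = \shVg \cap F_k \Mmod$ of subsheaves of $\shVt$, valid for all $k$, and to deduce the injectivity of the natural map $\gr_k^F \shVg \to \gr_k^F \Mmod$ from it. Indeed, granting this identity one has $\shVg \cap F_{k-1}\Mmod = F_{k-1}\shVg \subseteq F_k\shVg = \shVg \cap F_k\Mmod$, so there is a well-defined $\OX$-linear map $\gr_k^F\shVg = (\shVg \cap F_k\Mmod)/(\shVg \cap F_{k-1}\Mmod) \to F_k\Mmod/F_{k-1}\Mmod = \gr_k^F\Mmod$, and its kernel consists of the local sections of $\shVg \cap F_k\Mmod$ that already lie in $F_{k-1}\Mmod$, which is exactly $\shVg \cap F_{k-1}\Mmod$; hence the map is injective.

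One of the two inclusions is formal. Since $\Mmod$ is an $\OX$-submodule of $\shVt$, which is locally free over $\OX(\ast D)$, the sheaf $\Mmod$ — and therefore each $F_k\Mmod$ — is $\OX$-torsion-free, so it has no sections supported on $D$. The adjunction map then realizes $F_k\Mmod$ as a subsheaf of $\jl(F_k\Mmod\restr{X\setminus D}) = \jl F_k\shV$, compatibly with the chain of inclusions $F_k\Mmod \subseteq \Mmod \subseteq \shVt \subseteq \jl\shV$. Intersecting with $\shVg$ and invoking the definition $F_k\shVg = \shVg \cap \jl F_k\shV$ from \eqref{eq:Hodge} gives $\shVg \cap F_k\Mmod \subseteq F_k\shVg$.

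The reverse inclusion $F_k\shVg \subseteq F_k\Mmod$ is the substantive point, and it is where Saito's description of the minimal extension enters. It is a local statement, and since we are already in the normal crossing situation \eqref{eq:NCD} we may work near a point of $D$ in local coordinates with $D = \{t_1 \cdots t_r = 0\}$. There one uses the explicit formulas for the filtered minimal (intermediate) extension along a normal crossing divisor in \cite[3.10]{Saito-MHM} — the same formulas that give the chain $\shVg \subseteq \Mmod \subseteq \shVt$ recalled just above the lemma: the lattice $\shVg$ is one of the canonical $V$-filtration lattices of $\Mmod$, and the Hodge filtration $F_\bullet\Mmod$ restricted to $\shVg$ coincides with the filtration $F_\bullet\shVg$ produced by \eqref{eq:Hodge} out of Schmid's nilpotent orbit theorem. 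Equivalently, the only way for $F_\bullet\Mmod$ to leave $\shVg$ is by applying the vector fields $\partial_{t_i}$, which strictly enlarge the relevant $V$-lattice, so $\shVg \cap F_k\Mmod$ acquires no sections beyond those of $F_k\shVg$. This establishes the identity, and with it the lemma.

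The main obstacle is precisely this last step: matching Saito's recipe for the Hodge filtration on the minimal extension with the Deligne-canonical-lattice-plus-nilpotent-orbit description of $F_\bullet\shVg$, which requires some care with the $V$-filtration conventions. The compatibility is standard in the theory of Hodge modules, but it is exactly the content one must extract from \cite[3.10]{Saito-MHM}; every other step in the argument is formal.
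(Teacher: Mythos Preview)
Your proof is correct and follows essentially the same route as the paper: both reduce to the identity $\shVg \cap F_k\Mmod = F_k\shVg$ and derive it from Saito's description in \cite[3.10]{Saito-MHM} together with \eqref{eq:Hodge}. The paper is slightly more precise at the last step, extracting from \cite[(3.10.12)]{Saito-MHM} the explicit chain $\shVt^{>-1} \cap \jl F_k\shV \subseteq F_k\Mmod \subseteq \jl F_k\shV$ and intersecting with $\shVg \subseteq \shVt^{>-1}$ to get both inclusions at once, which makes the direction $F_k\shVg \subseteq F_k\Mmod$ immediate rather than heuristic.
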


\begin{proof}
We observe that $\shVg \cap F_k \Mmod = F_k \shVg$. Indeed, the construction of the
Hodge filtration on $\Mmod$ in \cite[(3.10.12)]{Saito-MHM} is such that one has
\[
	\shVt^{>-1} \cap \jl F_k \shV \subseteq F_k \Mmod \subseteq \jl F_k \shV,
\]
as subsheaves of $\shVt$. Now intersect with $\shVg$ and use \eqref{eq:Hodge} to get
the desired identity. The inclusion $\shVg \subseteq \Mmod$ induces an
inclusion $F_k \shVg \subseteq F_k \Mmod$, and the identity we have just proved
implies that $\gr_k^F \shVg \to \gr_k^F \Mmod$ stays injective.
\end{proof}

To recover the action of the Higgs field $\theta$, let us choose local coordinates
$x_1, \dotsc, x_n$ that are adapted to the normal crossing divisor $D$. We denote the
corresponding vector fields by the symbols $\partial_1, \dotsc, \partial_n$. Since
the connection in \eqref{eq:nabla} has logarithmic poles, the action by the vector
field $x_i \partial_i$ preserves the lattice $\shVg$, and therefore induces an
$\OX$-linear morphism 
\[
	x_i \partial_i \colon \gr_k^F \shVg \to \gr_{k+1}^F \shVg.
\]
Note that if $x_i = 0$ is not a component of $D$, the factor of $x_i$ is
not needed, because in that case, $\partial_i$ itself already maps $\gr_k^F \shVg$
into $\gr_{k+1}^F \shVg$.
Putting together the individual morphisms, we obtain an $\OX$-linear morphism
\begin{equation} \label{eq:Higgs-new}
	\gr_k^F \shVg \to \OmX^1(\log D) \tensor \gr_{k+1}^F \shVg,
	\quad s \mapsto \sum_{i=1}^n \frac{\mathit{dx}_i}{x_i} \tensor (x_i \partial_i s),
\end{equation}
which is exactly the Higgs field in \eqref{eq:Higgs}. 

\subsection{Constructing the Higgs sheaf}

We can now construct a collection of subsheaves $\shF_k \subseteq \shE_k$ by
intersecting $\shG_k$ and $\shE_k = \gr_k^F \shVg$ inside the larger coherent sheaf
$\gr_k^F \Mmod$. Since the intersection may not be reflexive, we actually define
\[
	\shF_k = (\shG_k \cap \shE_k)^{\vee\vee} \subseteq \shE_k
\]
as the reflexive hull of the intersection. Since $\shE_k = 0$ for $k < 0$, we
obviously have $\shF_k = 0$ in the same range. It is also not hard to see that 
$\shFb \subseteq \shEb$ is compatible with the action of the Higgs
field in \eqref{eq:Higgs}. Indeed, using the notation introduced before
\eqref{eq:Higgs-new}, the vector field $x_i \partial_i$ maps $\shF_k$ into
$\shF_{k+1}$, due to the fact that $\shGb$ is a graded $\shA_X$-submodule of
$\grFMb$. But this means that the Higgs field takes $\shF_k$ into the subsheaf
$\OmX^1(\log D) \tensor \shF_{k+1}$.

As in the work of Viehweg and Zuo, the key property is that the action of the Higgs
field on $\shFb$ only creates poles along the smaller divisor $D_f$.

\begin{proposition} \label{prop:Higgs}
The Higgs field maps $\shF_k$ into $\OmX^1(\log D_f) \tensor \shF_{k+1}$.
\end{proposition}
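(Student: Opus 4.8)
I want to show that the Higgs field on $\shFb$, which a priori lands in $\OmX^1(\log D)\tensor\shF_{\bullet+1}$, in fact has poles only along $D_f$. The point is local: it suffices to check, at a generic smooth point of an irreducible component $D_i$ of the normal crossing divisor $D$ with $D_i \not\subseteq D_f$, that the residue of $\theta$ along $D_i$ annihilates $\shF_k$. Using adapted local coordinates $x_1,\dots,x_n$ with $\{x_1 = 0\}$ a local equation for such a component $D_i$, the claim is that the operator $x_1\partial_1$ actually maps $\shF_k$ into $x_1 \shF_{k+1}$, equivalently that $\partial_1$ already maps $\shF_k$ into $\shF_{k+1}$ without the factor of $x_1$. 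The mechanism that makes this work is that, away from $D_f$, the graded module $\shGb$ is supported (as a sheaf on $T^*X$) in the zero section by \propositionref{prop:SuppG}, since $\Supp\shG\subseteq S_f$ and over $X\setminus D_f$ the set $S_f$ is just the zero section; so the $\shA_X$-action on $\shGb$ is, locally over $X\setminus D_f$, the trivial action, meaning every $\partial_i$ kills $\shGb$ there. The subtlety is that $\shFb$ is not $\shGb$ but a reflexive hull of $\shGb\cap\shEb$ inside $\grFMb$, and the Higgs field is the \emph{induced} operator on the $\shEb$-side, so one has to reconcile the two descriptions of the action.

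Concretely, I would argue as follows. Fix a generic smooth point $x$ of a component $D_i$ of $D$ not contained in $D_f$, and work on a small polydisc with coordinates $x_1,\dots,x_n$, $\{x_1=0\}\subseteq D_i$. On $X\setminus D_f$ the sheaf $\shG_k$ is torsion-free (\propositionref{prop:torsion}) and the $\shA_X = \Sym\shT_X$-module structure on $\shGb$ coming from its being a submodule of $\grFMb$ has the property that the symbols $\partial_j$ act as zero (since $\Supp\shG$ is the zero section over this locus). Therefore, for a local section $s$ of $\shF_k = (\shG_k\cap\shE_k)^{\vee\vee}$, the element $\partial_1 s \in \gr_{k+1}^F\Mmod$ — computed via the $\shA_X$-structure on $\grFMb$ — lies in $\shG_{k+1}$ (indeed, it is the image of $\partial_1$ acting on the $\shG$-side, which vanishes on the open locus where we compute the torsion-free statement; I would phrase this via the defining inclusion $\shGb\subseteq\grFMb$ and the fact that on $X\setminus D_f$ the action factors through the zero section). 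On the other hand $x_1\partial_1 s \in \shE_{k+1}$ by the discussion preceding \propositionref{prop:Higgs}. So $x_1\partial_1 s$ lies in $\shG_{k+1}\cap\shE_{k+1}$ and is divisible by $x_1$ inside $\shE_{k+1}$, hence — using that $\shE_{k+1}=\gr_{k+1}^F\shVg$ is locally free and that $\shG_{k+1}$ is torsion-free over $X\setminus D_f$ — the quotient $\partial_1 s = (x_1\partial_1 s)/x_1$ lies in $\shG_{k+1}\cap\shE_{k+1}$, hence in $\shF_{k+1}$ after passing to the reflexive hull (which changes nothing in codimension one, so in particular near our generic point of $D_i$). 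This shows the residue of $\theta$ along $D_i$ kills $\shF_k$ there, and since the components of $D$ not in $D_f$ are the only ones where $\theta$ could have had an extra pole, we conclude $\theta(\shF_k)\subseteq\OmX^1(\log D_f)\tensor\shF_{k+1}$.

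**The main obstacle.** The delicate point is precisely the compatibility of the two descriptions of the $\shA_X$-action — one via $\shGb\subseteq\grFMb$ (where $\partial_j$ acts trivially over $X\setminus D_f$ because of the support statement) and one via $\shEb = \gr_\bullet^F\shVg$ with the residue-twisted operators $x_i\partial_i$ coming from the logarithmic connection (\eqref{eq:Higgs-new}). Both actions are restrictions of the single action of $\shA_X$ on $\grFMb$, so morally they must agree, but one must be careful that intersecting with $\shE_k$ and then taking reflexive hulls does not disturb this: the saving grace is that everything is happening at a \emph{generic point of a codimension-one component}, where $\shE_k$ is locally free and $\shG_k$ is torsion-free, and reflexive hulls agree with the original sheaf in codimension one. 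I expect the bulk of the write-up to consist in setting up these local coordinates carefully and invoking \propositionref{prop:SuppG}, \propositionref{prop:torsion}, and the local description of the minimal/Deligne extension from \cite[3.10]{Saito-MHM} to make the divisibility-by-$x_1$ step rigorous.
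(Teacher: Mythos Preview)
There is a genuine gap in your argument. The assertion that ``the $\shA_X$-action on $\shGb$ is, locally over $X\setminus D_f$, the trivial action, meaning every $\partial_i$ kills $\shGb$'' is not correct. The condition $\Supp\shG\subseteq S_f$ only says that over $X\setminus D_f$ the sheaf $\shG$ is set-theoretically supported on the zero section of $T^{\ast}X$; equivalently, each $\partial_i$ acts \emph{nilpotently} on local sections of $\shGb$. It does not say that $\partial_i$ acts as zero. So from $s\in\shF_k$ you obtain $\partial_1 s\in\shG_{k+1}$ and $\partial_1^{\ell}s=0$ for some $\ell\gg 0$, but not $\partial_1 s=0$.

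This breaks your divisibility step. You know $x_1\partial_1 s\in\shE_{k+1}$ and $\partial_1 s\in\gr_{k+1}^F\Mmod$, but to conclude $\partial_1 s\in\shE_{k+1}$ you would need the inclusion $\shE_{k+1}=\gr_{k+1}^F\shVg\hookrightarrow\gr_{k+1}^F\Mmod$ to be saturated along $x_1=0$, and in general it is not: in the language of the $V$-filtration, $\shE_{k+1}=V^0\gr_{k+1}^F\Mmod$, and a priori $\partial_1 s$ only lies in $V^{-1}\gr_{k+1}^F\Mmod$. Divisibility of $x_1\partial_1 s$ by $x_1$ in the ambient $\gr_{k+1}^F\Mmod$ does not force the quotient to lie in the sublattice $V^0$.

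The paper fills exactly this gap. It uses only the correct consequence of the support condition, namely $\partial_n^{\ell}s=0$ for $\ell\gg 0$, and then invokes Saito's compatibility between the Hodge filtration and the $V$-filtration (the condition that $\partial_n\colon F_k\gr_V^{\alpha}\Mmod\to F_{k+1}\gr_V^{\alpha-1}\Mmod$ is an isomorphism for $\alpha<0$) to bootstrap: since $\partial_n^{\ell}$ induces an isomorphism on $\gr_{k+1}^F\gr_V^{\alpha}\Mmod$ for every $\alpha<0$, the vanishing of $\partial_n^{\ell}(\partial_n s)$ forces $\partial_n s$ to have image zero in every $\gr_V^{\alpha}$ with $-1\le\alpha<0$, hence to lie in $V^0\gr_{k+1}^F\Mmod=\shE_{k+1}$. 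This Hodge-module input is the missing idea in your sketch; without it, nilpotence alone is not enough to cross from $V^{-1}$ to $V^0$.
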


The proof exploits the relationship between $F_{\bullet} \Mmod$ and the V-filtration
with respect to locally defined holomorphic functions on $X$. We briefly review the
relevant properties; for a more careful discussion of how the V-filtration enters
into the definition of Hodge modules, see \cite[\S9--12]{sanya}. Suppose then that $t
\colon U \to
\CC$ is a non-constant holomorphic function on an open subset $U \subseteq X$, with
the property that $t^{-1}(0)$ is a smooth divisor; also suppose that we have a
holomorphic vector field $\partial_t$ such that $\lie{\partial_t}{t} = 1$. To keep
the notation simple, we denote the restriction of $M$ and $(\Mmod, F_{\bullet}
\Mmod)$ to the open set $U$ by the same symbols.

Being part of a Hodge module, the $\Dmod$-module $\Mmod$ admits a \define{rational
V-filtration} $V^{\bullet} \Mmod$ with respect to the function $t$. As we are working
with left $\Dmod$-modules, this is a decreasing filtration, discretely indexed by
$\alpha \in \QQ$, with the following properties:
\begin{enumerate}
\item One has $t \cdot V^{\alpha} \Mmod \subseteq V^{\alpha+1} \Mmod$, with
equality for $\alpha > -1$.
\item One has $\partial_t \cdot V^{\alpha} \Mmod \subseteq V^{\alpha-1} \Mmod$.
\item The operator $t \partial_t - \alpha$ acts nilpotently on $\gr_V^{\alpha}
\Mmod = V^{\alpha} \Mmod / V^{>\alpha} \Mmod$, where $V^{>\alpha} \Mmod$ is
defined as the union of those $V^{\beta} \Mmod$ with $\beta > \alpha$.
\item Each $V^{\alpha} \Mmod$ is coherent over $V^0 \Dmod_X$, which is defined as the
$\OX$-subalgebra of $\Dmod_X$ that preserves the ideal $t \OX \subseteq \OX$.
\end{enumerate}
More generally, every regular holonomic $\Dmod$-module with quasi-unipotent local
monodromy\footnote{This is a shorthand for saying that all eigenvalues of the
monodromy operator on the perverse sheaf of nearby cycles $\psi_t \DR(\Mmod)$ are
roots of unity.} around the divisor $t^{-1}(0)$ has a unique rational V-filtration; this
result is due to Kashiwara \cite{Kashiwara}. The uniqueness statement implies that if two
$\Dmod$-modules $\Mmod_1$ and $\Mmod_2$ admit rational V-filtrations, then any
morphism $f \colon \Mmod_1 \to \Mmod_2$ between them is strictly compatible with
these filtrations, in the sense that
\[
	f(\Mmod_1) \cap V^{\alpha} \Mmod_2 = f \bigl( V^{\alpha} \Mmod_1 \bigr)
\]
for all $\alpha \in \QQ$.

Saito defines the category of Hodge modules by requiring, among other
things, that the Hodge filtration $F_{\bullet} \Mmod$ interacts well with the
rational V-filtration $V^{\bullet} \Mmod$. The first requirement in the definition is
that
\begin{equation} \label{eq:t}
	t \colon F_k V^{\alpha} \Mmod \to F_k V^{\alpha+1} \Mmod
\end{equation}
must be an isomorphism for $\alpha > -1$; the second requirement is that
\begin{equation} \label{eq:partial}
	\partial_t \colon F_k \gr_V^{\alpha} \Mmod \to F_{k+1} \gr_V^{\alpha-1} \Mmod
\end{equation}
must be an isomorphism for $\alpha < 0$. Here and in what follows, $F_{\bullet}
\gr_V^{\alpha} \Mmod$ means the filtration induced by $F_{\bullet} \Mmod$; thus
\[
	F_k \gr_V^{\alpha} \Mmod = 
		\frac{V^{\alpha} \Mmod \cap F_k \Mmod + V^{>\alpha} \Mmod}{V^{>\alpha} \Mmod}.
\]
These two conditions together give us very precise information on how the two
operators $t$ and $\partial_t$ interact with the Hodge filtration $F_{\bullet}
\Mmod$.

\begin{proof}[Proof of \propositionref{prop:Higgs}]
Since $\shF_k$ and $\OmX^1(\log D_f) \tensor \shF_{k+1}$ are reflexive coherent
sheaves, we only need to prove the assertion outside a
subset of codimension $\geq 2$. After removing the singular locus of the normal
crossing divisor $D$, it is therefore enough to show that when the Higgs field in
\eqref{eq:Higgs-new} is applied to a local section of $\shF_k$, it does not actually
produce any poles
along the components of $D$ that do not belong to $D_f$. Fix such a component, and on a
sufficiently small open neighborhood $U$ of its generic point, choose local
coordinates $x_1, \dotsc, x_n$ such that $D \cap U$ is defined by the equation $x_n =
0$. Because we can ignore what happens on a subset of codimension $\geq 2$, we may
assume that $\shF_k = \shG_k \cap \shE_k$ on $U$. Moreover, the component in question
does not belong to $D_f$, and so we may further assume that $D_f \cap U = \emptyset$; 
the part of $S_f$ that lies over $U$ is then contained in the zero section of
the cotangent bundle. As $\shGb$ is coherent over $\shA_X$, this implies
that any section in $H^0(U, \shG_k)$ is annihilated by a sufficiently large power of
$\partial_n$.

Let $V_i^{\bullet} \Mmod$ be the rational V-filtration with respect to the
function $x_i$. Since $\Mmod$ is a flat bundle outside the divisor $x_n = 0$, it is
easy to see from the definition that 
\[
	V_i^{\alpha} \Mmod = x_i^{\max(0, \lfloor \alpha \rfloor)} \Mmod \subseteq \Mmod
\]
is essentially the $x_i$-adic filtration for $i = 1, \dotsc, n-1$; the same is true also
on the larger $\Dmod$-module $\shVt$. The defining property of the canonical lattices
implies that
\[
	\shVt^{\geq \alpha} = \bigcap_{i=1}^n V_i^{\alpha} \shVt
		= V_n^{\alpha} \shVt
\]
for $\alpha < 1$, as noted for example in \cite[(3.10.1)]{Saito-MHM}. Because any
morphism of $\Dmod$-modules is strictly compatible with the rational V-filtration, we obtain
\[
	V_n^{\alpha} \Mmod = \Mmod \cap V_n^{\alpha} \shVt 
		= \Mmod \cap \shVt^{\geq \alpha}
\]
as long as $\alpha < 1$; in particular, $V_n^0 \Mmod = \shVg$. Over the open set $U$, we
thus get 
\[
	\shE_k = \gr_k^F \shVg = V_n^0 \gr_k^F \Mmod \subseteq \gr_k^F \Mmod,
\]
where $V_n^{\bullet} \gr_k^F \Mmod$ again means the filtration induced by $V_n^{\bullet}
\Mmod$, that is to say,
\[
	V_n^{\alpha} \gr_k^F \Mmod = 
		\frac{V_n^{\alpha} \Mmod \cap F_k \Mmod + F_{k-1} \Mmod}{F_{k-1} \Mmod}.
\]

Now given any section $s \in \Gamma(U, \shF_k)$, we need to argue that $\partial_n s
\in H^0(U, \shF_{k+1})$; this will guarantee that the Higgs field in
\eqref{eq:Higgs-new} does not create a pole along $x_n = 0$ when applied to the
section $s$. Viewing $s$ as a section of the larger locally free sheaf $\shE_k = V_n^0
\gr_k^F \Mmod$, and remembering that the operator $\partial_n$ maps $F_k \Mmod$ into
$F_{k+1} \Mmod$ and $V_n^0 \Mmod$ into $V_n^{-1} \Mmod$, we obtain
\begin{equation} \label{eq:partial-s}
	\partial_n s \in H^0 \bigl( U, V_n^{-1} \gr_{k+1}^F \Mmod \bigr).
\end{equation}
We shall argue that, in fact, $\partial_n s \in H^0 \bigl( U, V_n^0 \gr_{k+1}^F \Mmod
\bigr)$. This is the crucial step in the proof; it rests entirely on the
compatibility between the Hodge filtration and the rational V-filtration, in the form
of Saito's condition \eqref{eq:partial}.

Since $\shF_k \subseteq \shG_k$, we already know that $\partial_n^{\ell+1} s = 0$ for
some $\ell \geq 1$; in other words, our section $\partial_n s$ is annihilated by the operator
$\partial_n^{\ell}$. To make use of this fact, consider the following commutative
diagram with short exact columns:
\[
\begin{tikzcd}
	F_k \gr_{V_n}^{\alpha} \Mmod \dar[hook]
		\rar{\partial_n^{\ell}} & F_{k+\ell} \gr_{V_n}^{\alpha-\ell} \Mmod \dar[hook] \\
	F_{k+1} \gr_{V_n}^{\alpha} \Mmod \dar[two heads]
		\rar{\partial_n^{\ell}} & F_{k+1+\ell} \gr_{V_n}^{\alpha-\ell} \Mmod \dar[two heads] \\
	\gr_{k+1}^F \gr_{V_n}^{\alpha} \Mmod
		\rar{\partial_n^{\ell}} & \gr_{k+1+\ell}^F \gr_{V_n}^{\alpha-\ell} \Mmod
\end{tikzcd}
\]
The condition in \eqref{eq:partial} relating $F_{\bullet} \Mmod$ and $V_n^{\bullet}
\Mmod$ tells us that the morphisms in the first and second row
are isomorphisms for $\alpha < 0$; of course, the morphism in the third row is then
also an isomorphism. Taking $\alpha = -1$, we see that
\[
	\partial_n^{\ell} \colon \gr_{k+1}^F \gr_{V_n}^{-1} \Mmod \to 
		\gr_{k+1+\ell}^F \gr_{V_n}^{-1-\ell} \Mmod
\]
is an isomorphism; because the image of $\partial_n s$ belongs to the kernel, we
conclude that $\partial_n s$ is in fact a section of 
\[
	V_n^{>-1} \gr_{k+1}^F \Mmod = V_n^{\alpha} \gr_{k+1}^F \Mmod
\]
for some $\alpha > -1$. As long as $\alpha < 0$, we can repeat this argument and
further increase the value of $\alpha$; because the rational V-filtration is discretely
indexed, we eventually arrive at the conclusion that
\[
	\partial_n s \in H^0 \bigl( U, V_n^0 \gr_{k+1}^F \Mmod \bigr)
		= H^0(U, \shE_{k+1}).
\]
Since also $\partial_n s \in H^0(U, \shG_{k+1})$, we obtain $\partial_n s \in H^0(U,
\shF_{k+1})$, as needed.
\end{proof}

We also need to have some information about the subsheaf $\shF_0 \subseteq F_0
\shVg$. From the definition of the Hodge filtration on $\Mmod$ in
\cite[(3.10.12)]{Saito-MHM}, we immediately get $F_0 \Mmod = F_0 \shVt^{>-1}$, and so
the problem is to compute the intersection
\[
	\shG_0 \cap F_0 \shVg \subseteq F_0 \shVt^{>-1}.
\]
This turns out to be fairly subtle, and the answer depends on the local monodromy
around the components of the divisor $D_f$. Fortunately, the following rather weak
result is enough for our purposes.

\begin{proposition} \label{prop:shFmin}
We have $L(-D_f) \subseteq \shF_0 \subseteq L$.
\end{proposition}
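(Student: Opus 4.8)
The plan is to prove both inclusions $L(-D_f) \subseteq \shF_0 \subseteq L$ by working locally and exploiting the descriptions established earlier. Recall that $\shF_0 = (\shG_0 \cap \shE_0)^{\vee\vee}$ inside $\gr_0^F \Mmod$, that $\shG_0 \simeq L$ by \propositionref{prop:shGmin} (using that $f$ has connected fibers), and that $\shE_0 = \gr_0^F \shVg = F_0 \shVg$ since $\shE_k = 0$ for $k < 0$. Since all three sheaves $L(-D_f)$, $\shF_0$, $L$ are reflexive (the first two visibly so, and $L$ being a line bundle), it suffices to establish the inclusions away from a closed subset of codimension $\geq 2$; in particular we may delete the singular locus of $D$ and assume $D$ is smooth, and we may work on a small neighborhood of the generic point of each component of $D$.

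The upper inclusion $\shF_0 \subseteq L$ is the easy half: we have a natural map $\shG_0 \cap \shE_0 \hookrightarrow \shG_0 \simeq L$, and since $L$ is reflexive this extends to the reflexive hull, giving $\shF_0 \subseteq L$. (One should check the identification $\shG_0 \simeq L$ is compatible with the inclusion into $\gr_0^F \Mmod$ used to define the intersection, but this is built into the construction in \S\ref{scn:HM}: $\shG_0$ is the degree-$(n-d)$ piece after the Tate shift, and the isomorphism with $L \tensor \fl\OY \simeq L$ comes from \propositionref{prop:shGmin}.)

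For the lower inclusion $L(-D_f) \subseteq \shF_0$, the strategy is to show that every local section of $L(-D_f)$, viewed inside $L \simeq \shG_0$, actually lies in $\shE_0 = F_0 \shVg$. Fix a component of $D$ with generic point having neighborhood $U$ and local coordinate $x_n$ cutting out $D \cap U$. Two cases arise. If the component lies in $D_f$, then sections of $L(-D_f)$ vanish along $x_n = 0$, and one wants to see that multiplying a section of $\shG_0 = F_0\Mmod \cap (\text{something})$ by $x_n$ lands in $F_0 \shVg = F_0 \Mmod \cap \shVg$; here one uses $F_0 \Mmod = F_0 \shVt^{>-1}$ together with the fact that $t = x_n$ maps $F_0 V_n^{\alpha}\Mmod$ isomorphically onto $F_0 V_n^{\alpha+1}\Mmod$ for $\alpha > -1$ (Saito's condition \eqref{eq:t}), so that $x_n \cdot F_0\shVt^{>-1} \subseteq F_0 V_n^0 \Mmod = F_0\shVg$. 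If the component does not lie in $D_f$, then as in the proof of \propositionref{prop:Higgs} the part of $S_f$ over $U$ is the zero section, so $\shG_0$ is already $\OX$-coherent near the generic point and there is nothing new to multiply by; one checks directly that $\shG_0 \cap \shE_0 = \shG_0 = L$ there, which is even stronger. Combining over all components of $D$ and using reflexivity to pass from the generic points to all of $X$ minus a codimension-$2$ set yields $L(-D_f) \subseteq \shG_0 \cap \shE_0 \subseteq \shF_0$.

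The main obstacle I anticipate is matching up the abstract filtration-theoretic description of $\shG_0$ with the concrete line bundle $L$ carefully enough to run the $x_n$-multiplication argument: one must be sure that the copy of $L$ inside $\gr_0^F\Mmod$ provided by \propositionref{prop:shGmin} is literally $F_0 \shVt^{>-1}$-valued in the right way near each component of $D_f$, so that Saito's condition \eqref{eq:t} can be invoked. This is exactly the kind of compatibility that \cite[(3.10.12)]{Saito-MHM} is designed to supply, but it requires a short local computation with the $V$-filtration, analogous to but simpler than the one in \propositionref{prop:Higgs}. The remark that the precise answer "depends on the local monodromy around the components of $D_f$" signals that the sharp statement would be more delicate; the point of \propositionref{prop:shFmin} is that the crude two-sided bound $L(-D_f) \subseteq \shF_0 \subseteq L$ is robust and does not need that finer analysis.
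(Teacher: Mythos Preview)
Your overall strategy matches the paper's: reduce to reflexive sheaves, work outside codimension $2$, and treat each smooth component of $D$ separately according to whether it lies in $D_f$ or not. The upper inclusion $\shF_0 \subseteq L$ and the $D_f$-component case of the lower inclusion are both fine and agree with the paper; in particular, the observation that $\shG_0 \subseteq \gr_0^F \Mmod = F_0 \Mmod = F_0 \shVt^{>-1}$ together with $x_n \cdot F_0 V_n^{>-1} \Mmod \subseteq F_0 V_n^0 \Mmod = \shE_0$ is exactly what the paper does, and your worry in the final paragraph about this step is unfounded: the containment $\shG_0 \subseteq F_0 \shVt^{>-1}$ is automatic from $F_0 \Mmod = F_0 \shVt^{>-1}$.

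The genuine gap is in the \emph{other} case, the component of $D$ not lying in $D_f$. You write that ``$\shG_0$ is already $\OX$-coherent near the generic point and there is nothing new to multiply by; one checks directly that $\shG_0 \cap \shE_0 = \shG_0 = L$ there.'' But $\shG_0$ is always $\OX$-coherent; what the support condition gives you is that $\shG_\bullet$ is $\OX$-coherent as a graded module, hence that $\partial_n^{\ell}$ annihilates sections of $\shG_0$ for $\ell \gg 0$. This by itself does \emph{not} show $\shG_0 \subseteq \shE_0$: a section $s$ of $\shG_0$ lies a priori only in $F_0 V_n^{>-1} \Mmod$, and you must promote it to $F_0 V_n^0 \Mmod$. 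The two lattices $\shVt^{>-1}$ and $\shVg$ differ precisely when the local monodromy has eigenvalues other than $1$, so there is real content here. The paper closes this gap by the same bootstrap as in \propositionref{prop:Higgs}: from $\partial_n^{\ell} s = 0$ and Saito's condition \eqref{eq:partial} (the isomorphism $\partial_n \colon F_k \gr_{V_n}^{\alpha} \Mmod \to F_{k+1} \gr_{V_n}^{\alpha-1} \Mmod$ for $\alpha < 0$) one iteratively increases $\alpha$ until reaching $s \in F_0 V_n^0 \Mmod$. In short, you correctly sensed that a $V$-filtration computation ``analogous to the one in \propositionref{prop:Higgs}'' is required, but you attached it to the wrong case: it is the non-$D_f$ components that need \eqref{eq:partial}, while the $D_f$ components need only the elementary multiplication by $x_n$.
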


\begin{proof}
It is again enough to prove this outside a closed subset of codimension $\geq 2$.
After removing the singular locus of the normal crossing divisor $D$, we may
therefore consider one component of $D$ at a time; as in the proof of
\propositionref{prop:Higgs}, we choose local coordinates $x_1, \dotsc, x_n$ on an
open subset $U$ such that $D \cap U$ is defined by the equation $x_n = 0$. Let $s \in
H^0(U, \shG_0)$ be any section. By definition,
\[
	s \in H^0(U, F_0 \shVt^{>-1}) = H^0(U, F_0 V_n^{>-1} \Mmod).
\]
Now there are two possibilities. If the component in question belongs to the divisor
$D_f$, we use the obvious fact that
\[
	x_n s \in H^0(U, F_0 V_n^0 \Mmod) \subseteq H^0(U, \shF_0)
\]
to conclude that multiplication by a local equation for $D_f$ maps $\shG_0 \simeq L$
into the subsheaf $\shF_0$. If the component in question does not belong to the
divisor $D_f$, then we argue as in the proof of \propositionref{prop:Higgs}. Namely,
the part of $S_f$ that lies over $U$ is contained in the zero section of the
cotangent bundle, which means that we have $\partial_n^{\ell} s = 0$ for $\ell \gg
0$. As before, we use \eqref{eq:partial} to conclude that 
\[
	s \in	H^0(U, F_0 V_n^0 \Mmod) = H^0(U, \shF_0),
\]
which leads to the desired conclusion also in this case.
\end{proof}

\subsection{Proof of Theorem~\ref*{thm:Higgs}}

We finish this chapter by proving \theoremref{thm:Higgs}. As already
mentioned, the (graded logarithmic) Higgs bundle $\shEb = \gr_{\bullet}^F \shVg$, with the
Higgs field in \eqref{eq:Higgs}, comes from the polarizable variation of Hodge
structure $\shV$ on $X \setminus D$, and so \ref{en:Higgse} is true by construction.
The graded submodule $\shFb \subseteq \shEb$ satisfies \ref{en:Higgsc} by
construction, \ref{en:Higgsb} because of \propositionref{prop:shFmin}, and 
\ref{en:Higgsd} because of \propositionref{prop:Higgs}.

\section{Positivity for Hodge modules and Higgs bundles}\label{big}

\subsection{Background on weak positivity}

In this paragraph we fix a smooth quasi-projective variety $X$, and a torsion-free coherent sheaf $\shF$ on $X$. 

\begin{definition}[\cite{Viehweg1, Viehweg2}] \hfill
\begin{renumerate}
\item We call $\shF$ \emph{weakly positive over an open set $U \subseteq X$} if for
every integer $\alpha > 0$ and every ample line bundle $H$ on $X$, there is an integer $\beta > 0$ such that 
$$(S^{\alpha \beta} \shF)^{\vee \vee} \otimes H^{\otimes \beta}$$
is generated by global sections at each point of $U$. We say that $\shF$ is
\emph{weakly positive} if such an open set $U \neq \emptyset$ exists. 
\item We call $\shF$ \emph{big} (in the sense of Viehweg) if for any line bundle $L$ on $X$, there exists some 
integer $\gamma > 0$ such that $(S^{\gamma} \shF)^{\vee \vee} \otimes L^{-1}$ is weakly positive.
\end{renumerate}
\end{definition}

We recall some basic facts needed in the next section; they are immediate applications of \cite[Lemma 1.4]{Viehweg1} and 
\cite[Lemma 3.6]{Viehweg2}.

\begin{lemma}\label{lemma:basic-WP}
Let $\shF$ and $\shG$ be torsion-free coherent sheaves on $X$. Then:
\begin{enumerate}
\item If $\shF \rightarrow \shG$ is surjective over $U$, and if $\shF$ is weakly positive over $U$, then $\shG$ is weakly positive over $U$. Moreover, if $\shF$ is big, then $\shG$ is big.
\item If $\shF$ is weakly positive and $A$ is a big line bundle, then $\shF \otimes A$ is big.
\item If $\shF$ is big, then $\det \shF$ is a big line bundle.
\end{enumerate}
\end{lemma}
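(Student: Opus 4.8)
The plan is to obtain all three statements as formal consequences of Viehweg's calculus of weak positivity, as recorded in \cite[Lemma~1.4]{Viehweg1} and \cite[Lemma~3.6]{Viehweg2}; nothing here is new, so the write-up should essentially translate those references into the notation of the present definition, and the only thing to monitor is the bookkeeping with double duals and with the codimension-$\ge 2$ loci on which surjections of torsion-free sheaves can fail.

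For (1) I would argue directly. Given $\alpha>0$ and an ample line bundle $H$, weak positivity of $\shF$ over $U$ produces $\beta>0$ such that $(S^{\alpha\beta}\shF)^{\vee\vee}\otimes H^{\otimes\beta}$ is globally generated at every point of $U$. Since $\Sym^{\alpha\beta}$ is right exact and commutes with restriction to opens, a morphism $\shF\to\shG$ surjective over $U$ induces a morphism $(S^{\alpha\beta}\shF)^{\vee\vee}\otimes H^{\otimes\beta}\to (S^{\alpha\beta}\shG)^{\vee\vee}\otimes H^{\otimes\beta}$ that is surjective at every point of $U$ at which $\shG$ is locally free; global generation at a point passes to quotient sheaves, so the target is globally generated on a dense open subset of $U$, and \cite[Lemma~1.4]{Viehweg1} takes care of the remaining (codimension $\ge 2$) points of $U$, giving weak positivity of $\shG$ over $U$. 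The bigness half is then formal: for a line bundle $L$ choose $\gamma$ with $(S^\gamma\shF)^{\vee\vee}\otimes L^{-1}$ weakly positive, observe that the induced morphism to $(S^\gamma\shG)^{\vee\vee}\otimes L^{-1}$ is surjective over a dense open set, and apply the first half to conclude that $(S^\gamma\shG)^{\vee\vee}\otimes L^{-1}$ is weakly positive, i.e.\ that $\shG$ is big.

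For (2) and (3) I would invoke \cite[Lemma~3.6]{Viehweg2} and recall the mechanism. In (2), $(S^\gamma(\shF\otimes A))^{\vee\vee}\simeq (S^\gamma\shF)^{\vee\vee}\otimes A^{\otimes\gamma}$, so for a given $L$ it suffices to make $(S^\gamma\shF)^{\vee\vee}\otimes A^{\otimes\gamma}\otimes L^{-1}$ weakly positive; since $A$ is big, Kodaira's lemma lets us write $A^{\otimes\gamma}\otimes L^{-1}$, for suitable $\gamma$, as an ample line bundle twisted by an effective one, and one combines (a) weak positivity of $(S^\gamma\shF)^{\vee\vee}$ (from that of $\shF$, by \cite[Lemma~1.4]{Viehweg1}), (b) stability of weak positivity under tensoring by a nef line bundle, and (c) the fact that twisting a torsion-free sheaf by an effective line bundle alters it only along a proper closed subset, so weak positivity over a suitable dense open set survives --- this last point being exactly the type of statement packaged in \cite[Lemma~3.6]{Viehweg2}. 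For (3): bigness of $\shF$ implies weak positivity of $\shF$, hence, by restricting a globally generated twist $(S^{\alpha\beta}\shF)^{\vee\vee}\otimes H^{\otimes\beta}$ of a symmetric power to a general complete-intersection curve and comparing degrees of determinants, $\det\shF$ is pseudo-effective; the extra freedom of twisting $(S^\gamma\shF)^{\vee\vee}$ down by an \emph{arbitrary} line bundle then promotes this to bigness of $\det\shF$, since on such a curve $\det\bigl((S^\gamma\shF)^{\vee\vee}\otimes L^{-1}\bigr)$ is a fixed positive power of $\det\shF$ twisted down by a power of $L$.

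I expect the main obstacle to be purely administrative: matching the normalizations of the symmetric powers, and handling the double duals and the codimension-$\ge 2$ loci where sheaf surjections degenerate. None of this is deep --- it is precisely what \cite[Lemma~1.4]{Viehweg1} and \cite[Lemma~3.6]{Viehweg2} are designed to handle --- so the final argument should occupy only a few lines pointing to those results.
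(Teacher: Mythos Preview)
Your proposal is correct and matches the paper's approach exactly: the paper gives no proof beyond the sentence ``they are immediate applications of \cite[Lemma 1.4]{Viehweg1} and \cite[Lemma 3.6]{Viehweg2},'' and you have simply unpacked those references in more detail than the paper itself does.
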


\subsection{Positivity for Hodge modules}\label{big-HM}
Let $M$ be a pure Hodge module on a smooth projective variety $X$, with underlying filtered $\Dmod_X$-module
$(\Mmod, F_{\bullet} \Mmod)$.
For each $k$, the filtration induces Kodaira-Spencer type $\OX$-module homomorphisms 
$$\theta_k:  \gr_k^F \Mmod \longrightarrow \gr_{k+1}^F \Mmod \otimes \OmX^1,$$
and we shall use the notation
$$K_k (M): = \ker \theta_k.$$
Below we will make use of the following weak positivity statement, extending to Hodge modules results of \cite{Zuo} and \cite{Brunebarbe}, which are themselves generalizations of the well-known Fujita-Kawamata semipositivity theorem.

\begin{theorem}[{\cite[Theorem A]{PW}}]\label{thm:WP}
If $M$ is a pure polarizable Hodge module with strict support $X$, then the reflexive sheaf $K_k (M)^\vee$ is weakly positive for any $k$.
\end{theorem}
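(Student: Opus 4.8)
The plan is to reduce the statement to the case of a polarizable variation of Hodge structure with unipotent local monodromy, where it is essentially due to Zuo \cite{Zuo} and, in the precise logarithmic form needed here, to Brunebarbe \cite{Brunebarbe}, and then to bridge the gap by means of Saito's description of the Hodge filtration on a minimal extension in terms of Deligne's canonical lattices.

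First I would reduce to a geometrically tame situation. Since $M$ has strict support $X$, its restriction to a dense open subset is a polarizable variation of Hodge structure $\shV$; after finitely many blowups with smooth centers --- a standard manoeuvre, compatible with the weak positivity of a reflexive sheaf as in \cite{Viehweg1, Viehweg2} --- we may take this open subset to be $U = X \setminus D$ with $D$ a simple normal crossing divisor. On $U$ the filtered module $(\Mmod, F_\bullet\Mmod)$ restricts to the Hodge filtration of $\shV$, the operators $\theta_k$ are the graded pieces of the Higgs field, and $K_k(M)|_U$ is the holomorphic subbundle $\ker\bigl(\theta_k \colon \gr_k^F\shV \to \gr_{k+1}^F\shV \tensor \OmX^1\bigr)$.

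Next comes the curvature input. Equipping each Hodge bundle with the metric induced by the polarization, the standard consequence of Griffiths' curvature formula for variations of Hodge structure is that the Hodge metric on $K_k(M)|_U$ has seminegative curvature: the only potentially positive contribution --- the term built from $\theta_k$ and its metric adjoint --- vanishes identically on $\ker\theta_k$, while the surviving terms (from $\theta_{k-1}$ and from the second fundamental form of the inclusion) are seminegative. Hence $K_k(M)^\vee$ carries over $U$ a smooth metric of semipositive curvature, and by Schmid's norm estimates this metric has at worst logarithmic singularities along $D$, so its curvature prolongs to a positive current on all of $X$.

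The delicate point --- and the one I expect to be the main obstacle --- is to identify the coherent sheaf $K_k(M)$ on $X$ with the extension for which this metric argument globalizes, namely the kernel of the logarithmic Higgs field $\theta \colon \shEb \to \OmX^1(\log D) \tensor \shE_{\bullet+1}$ on the Higgs bundle $\shEb = \gr_\bullet^F\shVg$ attached to Deligne's canonical extension $\shVg$ of $\shV$. Here Saito's formulas for the Hodge filtration on the minimal extension are the essential tool: the inclusions $\shVt^{>-1}\cap\jl F_k\shV \subseteq F_k\Mmod \subseteq \jl F_k\shV$ recalled above, together with the compatibility conditions \eqref{eq:t} and \eqref{eq:partial} between $F_\bullet\Mmod$ and the rational $V$-filtration, pin down $\gr_k^F\Mmod$ near $D$ in terms of $\gr_k^F\shVg$ and the residues of the connection. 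When all local monodromies around the components of $D$ are unipotent these identifications give, away from a subset of codimension $\geq 2$, that $K_k(M)$ is exactly the kernel of the logarithmic Higgs field on $\shEb$, so the theorem reduces to Brunebarbe's; the general case is then handled by passing to a Kawamata-type finite cover $\tau \colon \tilde X \to X$ ramified along $D$ on which the pulled-back variation becomes unipotent, applying the unipotent case to the minimal extension on $\tilde X$, and descending via the standard finite-cover arguments for weak positivity (using that a direct summand --- hence a quotient --- of $\tau_\ast$ of a weakly positive sheaf is again weakly positive, cf. \cite{Viehweg1, Viehweg2} and Lemma~\ref{lemma:basic-WP}). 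The hard part, concretely, is to carry out this identification of $K_k(M)$ with the Deligne-lattice kernel uniformly along all of $D$, controlling the discrepancy by an effective divisor supported on $D$ that does not destroy weak positivity --- that is, handling non-unipotent local monodromy and the behaviour over the singular locus of $D$, and checking throughout that it is the kernel of the Higgs field, rather than the ambient Hodge bundle, whose metric stays seminegative all the way to the boundary.
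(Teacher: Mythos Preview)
The paper does not prove \theoremref{thm:WP}: it is quoted as Theorem~A of \cite{PW} and used as a black box, so there is no proof here to compare your proposal against. The only hint the paper gives about the argument appears in \parref{big-Higgs}, where it remarks that the closely related weak positivity of $K_k(\shEb)^\vee$ for the logarithmic Higgs bundle $\shEb = \gr_\bullet^F \shVg$ is ``deduced in \cite{PW} (see Theorem~4.9 and its proof, a step towards the proof of \theoremref{thm:WP}) as a quick corollary of the results of \cite{Zuo} and \cite{Brunebarbe}.''

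Your sketch is consistent with that hint: you correctly isolate the curvature input from \cite{Zuo} and \cite{Brunebarbe} in the logarithmic Higgs setting as the core, and you correctly flag as the main obstacle the passage from the Higgs-bundle kernel $K_k(\shEb)$ to the Hodge-module kernel $K_k(M)$ via Saito's description of $F_\bullet\Mmod$ in terms of the $V$-filtration. One caution: even in the unipotent case your claimed identification of $K_k(M)$ with the kernel of the logarithmic Higgs field is not literally an equality of sheaves, since $\gr_k^F\Mmod$ is in general strictly larger than $\gr_k^F\shVg$ along $D$; what one actually needs for weak positivity is only an inclusion, or agreement up to reflexive hull outside codimension two. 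Your Kawamata-cover reduction and descent via \lemmaref{lemma:basic-WP}-type arguments is the standard route, though the descent step also requires care because the minimal extension does not simply pull back under a ramified cover.
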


We now give an ad-hoc definition, for repeated use in what follows. Recall that 
given filtered $\Dmod_X$-module $(\Mmod, F_{\bullet} \Mmod)$, the associated graded $\gr_{\bullet}^F \Mmod$ is coherent 
graded $\shA_X$-module, with $\shA_X = \Sym \shT_X$. Moreover, we denote 
\begin{equation}\label{minimum}
p (\Mmod) : = \min ~\{ ~p ~|~F_p \Mmod \neq 0~\}.
\end{equation}
In other words, 
$$F_{p(\Mmod)} \Mmod = \gr_{p(\Mmod)} ^F \Mmod$$ 
is the lowest nonzero graded piece in the filtration on $\Mmod$. If $(\Mmod,
F_{\bullet} \Mmod)$ underlies a Hodge module $M$, we also use the notation $p(M)$
instead of $p(\Mmod)$.

\begin{definition}[Large graded $\shA_X$-submodules]
Let $M$ be a pure Hodge module with strict support $X$.  A graded $\shA_X$-submodule
$$\shGb \subseteq \gr_{\bullet}^F \Mmod$$
is called \emph{large (with respect to $D$)} if there exist a big line bundle $A$ and an effective divisor $D$ on $X$, together with an integer $\ell \ge 0$, such that:
\begin{itemize}
\item there is a sheaf inclusion $A (-\ell D) \hookrightarrow \shG_{p(M)} $.
\item the support of the torsion of all $\shG_k$ is contained in $D$.\footnote{Typically $D$ may be the complement of the locus where $M$ is a variation of Hodge structure, though we will also 
have to deal with the case when $D$ is strictly contained in that locus.}
\end{itemize}
\end{definition}

\medskip

For applications we need to prove the following stronger version of
\theoremref{thm:HM-positivity}; the reason is that the locus where the Hodge module
we consider is not a variation of Hodge structure is usually bigger than the
singular locus of the family we are interested in.

\begin{theorem}\label{thm:submodule-positivity}
Let $X$ be a smooth projective variety, and let $M$ be a pure Hodge module $M$ with strict support $X$ and underlying filtered
$\Dmod_X$-module $(\Mmod, F_{\bullet} \Mmod)$, which is generically a variation of
Hodge structure of weight $k$. Assume that there exists a graded $\shA_X$-submodule
$\shGb \subseteq \gr_{\bullet}^F \Mmod$ which is large with respect to a divisor $D$.
Then at least one of the following holds:
\begin{renumerate}
\item $D$ is big.
\item There exist $1 \le s \le k$, $r \ge 1$, and a big coherent sheaf $\shH$ on $X$,
such that  $$\shH \hookrightarrow (\OmX^1)^{\otimes s} \otimes \OX(rD)$$
\end{renumerate}
Moreover, if $X$ is not uniruled, then $\omX (D)$ is big.
\end{theorem}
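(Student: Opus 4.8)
The plan is to build a Higgs-theoretic machine out of the data $(\shGb, A, D, \ell)$ and feed it through the weak positivity theorem \theoremref{thm:WP}, arguing by induction on the length of the chain of graded pieces. First I would dispose of the degenerate case: if $\theta_{p(M)}$ vanishes identically on $\shG_{p(M)}$, then the big line bundle $A(-\ell D)$ embeds into $\ker \theta_{p(M)} = K_{p(M)}(M)$, so $K_{p(M)}(M)^\vee$ contains the inverse of a big line bundle twisted by $\ell D$; since $K_{p(M)}(M)^\vee$ is weakly positive by \theoremref{thm:WP}, a standard argument (using \lemmaref{lemma:basic-WP}, taking a symmetric power to kill $A^{-1}$) forces $\ell D$ — and hence $D$ — to be big, landing us in case (i). So we may assume $\theta_{p(M)}$ does not kill the image of $A(-\ell D)$ in $\shG_{p(M)}$.

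Next, the inductive step. Starting from $A(-\ell D) \hookrightarrow \shG_{p(M)}$, apply the Kodaira--Spencer map $\theta$; since $\shGb$ is a graded $\shA_X$-submodule, $\theta$ carries $\shG_k$ into $\shG_{k+1} \otimes \OmX^1$, and composing with the natural surjection onto $\gr^F_{k+1}\Mmod / (\text{torsion})$ we obtain, after at most $k$ steps (the weight bounds the length of the Hodge filtration), a nonzero map whose source is a twist of $A$ by a bounded multiple of $D$. At each stage either the iterated Higgs field kills the image — in which case the image sits inside some $K_j(M)$, and the same weak-positivity-versus-bigness dichotomy as in the first paragraph applies, giving case (i) — or it does not, and we push one step further while picking up at most one factor of $\OmX^1$ and a controlled twist by $D$ (to absorb the torsion, which is supported on $D$). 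If the process never terminates in case (i), after some $1 \le s \le k$ steps we arrive at an inclusion of the form $A(-rD) \hookrightarrow \gr^F_{p(M)+s}\Mmod \otimes (\OmX^1)^{\otimes s}$ modulo torsion, and tensoring by $\OX(rD)$ together with dualizing/weak-positivity of the relevant $K_j(M)^\vee$ converts $A$ into a big coherent sheaf $\shH$ with $\shH \hookrightarrow (\OmX^1)^{\otimes s} \otimes \OX(rD)$, which is case (ii). The bookkeeping of the twists by $D$, and making sure the torsion subsheaves (controlled to live on $D$ by the largeness hypothesis) do not obstruct the maps, is the part that needs care but is essentially formal.

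The main obstacle, I expect, is the very last clause: deducing that $\omX(D)$ is big when $X$ is not uniruled. In case (i) this is immediate since $D$ big implies $\omX(D)$ big. In case (ii) one has a big subsheaf $\shH$ of $(\OmX^1)^{\otimes s} \otimes \OX(rD)$, equivalently a big subsheaf of $\big(\OmX^1(\log D')\big)^{\otimes s}$ for a suitable normal crossing model, and the conclusion is exactly the input one needs for the Campana--P\u aun theorem \cite{CPa}: a tensor power of the (log-)cotangent bundle containing a big coherent subsheaf forces $\omX(D)$ to be big, provided $X$ carries no rational curves through a general point, i.e.\ is not uniruled. So this step is not a new argument but an invocation of \cite{CPa}; the work is to phrase case (ii) precisely in the normal-crossings log setting that their theorem requires (replacing $X$ by a log resolution of $(X,D)$, checking the big subsheaf survives the pullback), which is routine but must be spelled out. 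The uniruledness hypothesis is genuinely needed here, as \exampleref{ex:uniruled-counterex} shows, so no shortcut avoiding \cite{CPa} is available in the general case.
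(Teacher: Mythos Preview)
Your argument for the dichotomy (i)/(ii) is essentially the paper's: iterate the Kodaira--Spencer maps along the chain $\shG_p \to \shG_{p+1}\otimes\OmX^1 \to \cdots$, and at each step either the image of $A(-\ell D)$ falls into a kernel $K_j$ (whence weak positivity of $K_j^\vee$ forces $D$ big) or the inclusion survives one step further, terminating after at most $k$ steps. One small sharpening: at the terminal step the inclusion lands in $K_{p+s}\otimes(\OmX^1)^{\otimes s}$ (the kernel, not merely $\gr^F_{p+s}\Mmod\otimes(\OmX^1)^{\otimes s}$), and dualizing gives a nontrivial map $K_{p+s}^\vee\otimes A \to (\OmX^1)^{\otimes s}\otimes\OX(rD)$ whose image is the big sheaf $\shH$.

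The genuine gap is in the final clause. You assert that the inclusion $\shH \hookrightarrow (\OmX^1)^{\otimes s}\otimes\OX(rD)$ is ``equivalently a big subsheaf of $\bigl(\OmX^1(\log D')\bigr)^{\otimes s}$ for a suitable normal crossing model'', so that the Campana--P\u{a}un log theorem applies directly. This is false: the integer $r$ arising from the torsion bookkeeping is not bounded in terms of $s$, and $(\OmX^1)^{\otimes s}\otimes\OX(rD)$ simply does not embed into any $\bigl(\OmX^1(\log D')\bigr)^{\otimes s}$ once $r$ is large. A quick sanity check: the log Campana--P\u{a}un theorem (\theoremref{thm:CP}) has \emph{no} non-uniruledness hypothesis, so if your reduction worked it would prove the conclusion unconditionally, contradicting \exampleref{ex:uniruled-counterex}.

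The paper's route here is different. One uses \cite{BDPP} (non-uniruled $\Rightarrow$ $\omX$ pseudo-effective) together with a distinct result of Campana--P\u{a}un, namely that when $X$ is not uniruled every torsion-free quotient of $(\OmX^1)^{\otimes s}$ has pseudo-effective determinant. Form the exact sequence
\[
0 \to \shH \to (\OmX^1)^{\otimes s}\otimes\OX(rD) \to \shQ \to 0,
\]
saturate $\shH$ so that $\shQ$ is torsion-free, observe that $\shQ(-rD)$ is a torsion-free quotient of $(\OmX^1)^{\otimes s}$, and conclude $\det\shQ$ is pseudo-effective. Taking determinants yields bigness of $\omX^{\otimes a}\otimes\OX(bD)$ for some $a,b>0$; since $\omX$ is pseudo-effective this gives $\omX(D)$ big. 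The uncontrolled $r$ is absorbed harmlessly into $b$, which is exactly what your log-cotangent rewriting cannot do.
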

\begin{proof}
Note to begin with that $F_{p(M)}  \Mmod$ is a torsion-free sheaf; see for instance \cite{Saito-KC}. It follows that $\shG_{p(M)}$ is 
torsion-free as well. By assumption, there is a big line bundle $A$ on $X$ and an integer $\ell \ge 0$, 
together with an injective sheaf morphism 
$$A (- \ell D)  \hookrightarrow \shG_{p(M)}.$$

Denoting for simplicity $p = p(M)$, the graded $\shA_X$-module structure induces a chain of homomorphisms of coherent 
$\OX$-modules
$$0 \longrightarrow \shG_p \overset{\theta_p}{\longrightarrow}  \shG_{p+1}  \otimes  \OmX^1 
\overset{\theta_{p+1} \circ \id}{\longrightarrow}  \shG_{p+2}  \otimes  
(\OmX^1)^{\otimes 2} \longrightarrow \cdots$$
Just as with Hodge modules, we will denote
$$K_k = K_k (\shGb) : = \ker \big( \theta_k:  \shG_k  \longrightarrow \shG_{k+1}  \otimes \OmX^1\big).$$
There are obvious inclusions 
$$K_k \hookrightarrow K_k (M),$$
which by \theoremref{thm:WP} and \lemmaref{lemma:basic-WP}(1) imply that $K_k^\vee$ are weakly positive for all $k$.
To start making use of this property, note to begin with that given the inclusion of $A ( - \ell D)$ into $\shG_p$, there are 
two possibilities: 

The first is that the induced homomorphism 
$$A ( -\ell D) \longrightarrow \shG_{p+1} \otimes  \OmX^1$$
is not injective, i.e. $A$ maps into the torsion of this sheaf, whose support is assumed to be contained in $D$. It follows that there exists a 
non-trivial subscheme $Z \subset X$ such that $Z_{\mathrm{red}} \subseteq D$ and $A (-\ell D)  \otimes {\mathcal I}_Z \subset K_p$. This implies that there exists an integer $r \ge 1$ and an inclusion $A (- rD) \hookrightarrow K_p$, which induces a non-trivial homomorphism
$$K_p^\vee \longrightarrow A^{-1} (rD).$$
Using  the weak positivity of $K_p^\vee$ again, we get that $A^{-1} (rD)$ is pseudo-effective. Since $A$ is big, we get that $D$ must be big, i.e. the condition in (i).

The second possibility is that we have an inclusion
$$A (-\ell D) \hookrightarrow   \shG_{p+1} \otimes  \OmX^1.$$
We can then repeat the same argument via the morphisms $\theta_s \circ \id$ with $s \ge p+1$. 
The next thing to note however is that there is an $s \le k$ where the inclusions will have to stop, i.e. such that 
$$A \subseteq \shG_{p+s}  \otimes  (\OmX^1)^{\otimes s} \,\,\,\,\,{\rm and} \,\,\,\,\,
A \not \subseteq  \shG_{p+s+1} \otimes  (\OmX^1)^{\otimes s+1}.$$
Indeed, note that an inclusion $A \subseteq  \gr_{p+t}^F \Mmod \otimes  (\OmX^1)^{\otimes t}$ can only hold as long as 
$\gr_{p+t}^F \Mmod$ is not a torsion sheaf. Recall however that $(\Mmod, F_{\bullet} \Mmod)$  underlies an extension of a
variation of Hodge structure ${\bf V}$ of weight $k$ on an open set $U \subset X$. Thus over $U$ the sheaves $\gr_{p+t}^F \Mmod$ coincide with Hodge bundles of ${\bf V}$, and therefore are non-zero only for $t \le k$.

As above,  this implies that there exists some $r \ge 1$ such that
\begin{equation}\label{limit_case}
A (-rD) \subseteq K_{p+s} \otimes  (\OmX^1)^{\otimes s} .
\end{equation}
We conclude that for $s$ as in ($\ref{limit_case}$), there exists a nontrivial homomorphism
$$K_{p+s}^\vee \otimes  A \longrightarrow (\OmX^1)^{\otimes s} \otimes \OX(rD)$$
and, using the weak positivity of $K_{p+s}^\vee$ and \lemmaref{lemma:basic-WP}(1) and (2), taking its image we obtain an inclusion
$$\shH \hookrightarrow (\OmX^1)^{\otimes s} \otimes \OX(rD)$$
with $\shH$ a big sheaf on $X$, i.e. the condition in (ii).

Let us now assume that $X$ is not uniruled. By \cite[Corollary 0.3]{BDPP} it follows that $\omX$ is pseudo-effective. If $D$ is big, then we immediately get the conclusion. Otherwise we employ the standard argument based on the pseudo-effectivity of quotients by Viehweg-Zuo type sheaves, inspired by an idea in \cite[\S2.2]{CP} (see also \cite{Patakfalvi}): using (ii), we have a short exact sequence
$$0 \longrightarrow \shH \longrightarrow (\OmX^1)^{\otimes s} \otimes \OX(rD) \longrightarrow \shQ \longrightarrow 0,$$
and passing to the saturation of $\shH$ we can assume that $\shQ$ is torsion-free.
Using that $X$ is not uniruled, a special case of \cite[Theorem 1.2]{CPa} says that every torsion-free quotient of 
$(\OmX^1)^{\otimes s}$ has pseudo-effective determinant; it follows that $\det \big( \shQ (-rD) \big)$ is pseudo-effective, which implies that $\det \shQ$ is pseudo-effective as well. Since $\shH$ is big, its determinant is also big by \lemmaref{lemma:basic-WP}(3), and one obtains by passing to determinants in the sequence above that 
$$\omX^{\otimes s} \otimes \OX(nrs D),$$ 
is big, with $n= \dim X$. Finally, $\omX$ is pseudo-effective, and so multiplying by its suitable power implies that $\omX(D)$ is big.
\end{proof}

\begin{example}\label{ex:uniruled-counterex}
The last statement of \theoremref{thm:submodule-positivity} may fail if $X$ is
uniruled. For example, consider the double covering of $\PP^1$ branched at the two
points $0$ and $\infty$. Let $M$ be the direct image of the constant Hodge module,
and denote by $(\Mmod, F_{\bullet} \Mmod)$ the underlying filtered $\Dmod$-module.
Then \corollaryref{cor:shGMmin} shows that $F_0 \Mmod$ is the direct image of the
relative canonical bundle, which equals $\shO_{\PP^1} \oplus \shO_{\PP^1}(1)$. Even
though $F_0 \Mmod$ contains an ample line bundle, $\PP^1 \setminus \{0, \infty\}$ is
of course not of log general type. This phenomenon is partly explained by
\propositionref{prop:shFmin}: in the process of constructing the Higgs subsheaf
$\shFb$, the fact that the local system corresponding to the summand
$\shO_{\PP^1}(1)$ has nontrivial monodromy of order $2$ around each of the two
points means that we end up with $\shF_0 = \shO_{\PP^1}(-1)$, which is no longer
ample.
\end{example}

\subsection{Positivity for Higgs bundles}\label{big-Higgs}
We will also need a version of \theoremref{thm:submodule-positivity} in the case of
(graded logarithmic) Higgs bundles. This will 
allow us later on to deal with the a priori possibility of $X$ being uniruled.

The set-up is as follows: $X$ is a smooth projective variety, and $D$ a simple normal crossings divisor on $X$. 
We consider a (graded logarithmic) Higgs bundle
\[
	\theta_p \colon \shE_p \to \OmX^1(\log D) \tensor \shE_{p+1}
\]
extending a polarizable variation of Hodge structure of weight $\ell$ on $X \setminus D$; up to Tate twist, we can make the convention that $\shE_p \neq 0$ if only if $0 \le p \le \ell$. We also consider a graded submodule 
$\shFb \subset \shEb$ having the property that
$$\theta_p (\shF_p) \subseteq \OmX^1(\log B) \tensor \shF_{p+1}$$
for some divisor $B \subseteq D$. Note that since $\shE_p$ are vector bundles, the sheaves $\shF_p$ are automatically torsion-free. By analogy with the previous section, we say that $\shFb$ is \emph{large} if there exists 
a big line bundle $A$ such that $A\subseteq \shF_0$.

The first part of the next theorem is essentially due to Viehweg-Zuo \cite{VZ2}, at least in the geometric case; it can be proved 
completely analogously to \theoremref{thm:submodule-positivity}, replacing the chain of 
coherent $\shO_X$-module homomorphisms there with 
 $$0 \longrightarrow \shF_0 \overset{\theta_0}{\longrightarrow}  \shF_{1}  \otimes  \OmX^1 (\log B)
\overset{\theta_{1} \circ \id}{\longrightarrow}  \shF_{2}  \otimes  
\big(\OmX^1 (\log B)\big)^{\otimes 2} \longrightarrow \cdots$$
The argument is in fact simpler, as no torsion issues arise. The weak positivity of $K_k (\shEb)^\vee$, with
 $$K_k (\shEb) : = \ker \big( \theta_k:  \shE_k  \longrightarrow \OmX^1 (\log D) \otimes \shE _{k+1} \big),$$
is deduced in \cite{PW} (see Theorem 4.9 and its proof, a step towards the proof of \theoremref{thm:WP}) 
as a quick corollary of the results of \cite{Zuo} and \cite{Brunebarbe}.

\begin{theorem}\label{thm:submodule-Higgs}
Assume that $X$ is endowed with large submodule of a (graded logarithmic) Higgs bundle, as above. 
Then there exist a big coherent sheaf $\shH$ on $X$ and an integer $1 \le s \le \ell$, together with an inclusion
$$\shH \hookrightarrow \big(\OmX^1(\log B)\big)^{\otimes s}.$$
In particular, $(X, B)$ is of  log-general type, i.e. $\omega_X(B)$ is big.
\end{theorem}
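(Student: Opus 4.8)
The plan is to follow the same template as the proof of \theoremref{thm:submodule-positivity}, but in the cleaner setting of a logarithmic Higgs bundle where the underlying sheaves are locally free or torsion-free and no V-filtration subtleties intervene. First I would set up the iterated Higgs field: starting from the big line bundle $A \subseteq \shF_0$, I compose with $\theta_0, \theta_1 \circ \id, \dotsc$ to obtain, for each $s$, a morphism $A \to \shF_s \otimes (\OmX^1(\log B))^{\otimes s}$, and I argue that there is a largest $s$ for which this morphism is nonzero (equivalently, injective, since $A$ is a line bundle and the target is torsion-free). Such an $s$ exists and satisfies $1 \le s \le \ell$: once the image lands in $\shF_t \otimes (\OmX^1(\log B))^{\otimes t}$, nonvanishing forces $\shF_t \ne 0$, hence $\shE_t \ne 0$, and by the normalization convention this can only happen for $t \le \ell$; and $s \ge 1$ because $\theta$ is generically nonzero on the variation (otherwise the iterated construction would be vacuous — more precisely, if $s=0$ one would already have $A \subseteq \ker\theta_0$ and the argument below applies verbatim with $s=0$ replaced by the first index).

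Next, at the terminal index $s$, the composite $A \to \shF_{s+1} \otimes (\OmX^1(\log B))^{\otimes(s+1)}$ vanishes, which means the (injective) image of $A$ in $\shF_s \otimes (\OmX^1(\log B))^{\otimes s}$ is killed by $\theta_s \otimes \id$; since $K_s(\shEb) = \ker\theta_s$ and the tensor factor is locally free, this gives an inclusion $A \subseteq K_s(\shEb) \otimes (\OmX^1(\log B))^{\otimes s}$, i.e. a nonzero homomorphism $K_s(\shEb)^\vee \otimes A \to (\OmX^1(\log B))^{\otimes s}$. By the weak positivity of $K_s(\shEb)^\vee$ recalled from \cite{PW} (Theorem 4.9 there) together with \lemmaref{lemma:basic-WP}(1)–(2), the sheaf $K_s(\shEb)^\vee \otimes A$ is big, and its image $\shH$ under this homomorphism is a big coherent subsheaf of $(\OmX^1(\log B))^{\otimes s}$, which is the first assertion. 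For the last sentence, pass to the saturation of $\shH$ so that the quotient $\shQ$ is torsion-free, and apply \lemmaref{lemma:basic-WP}(3): $\det\shH$ is a big line bundle, and $\det\shH \hookrightarrow \det\big((\OmX^1(\log B))^{\otimes s}\big)$, which is a tensor power of $\det\OmX^1(\log B) = \omega_X(B)$; since a sub-line-bundle of a big line bundle is big, $\omega_X(B)$ is big, so $(X,B)$ is of log-general type.

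The one genuinely new wrinkle compared with the non-logarithmic case — and the place where I would be most careful — is that the poles in the iterated field live along $B$, not along $D$, so the chain of maps must be written with $\OmX^1(\log B)$ throughout and one must check that the kernel $K_s(\shEb)$, a priori defined using the $\log D$ Higgs field, still annihilates the image of $A$: this is fine because $\shFb$ is a graded submodule with $\theta_s(\shF_s) \subseteq \OmX^1(\log B)\otimes \shF_{s+1} \subseteq \OmX^1(\log D)\otimes \shE_{s+1}$, so the two notions of kernel agree on the relevant sections. Beyond that, everything is a simplification of the argument already carried out in \theoremref{thm:submodule-positivity}: there is no torsion to track, no branch into case (i) forced by torsion in $\shG_{p+1}$ — the analogue of that branch simply does not occur because the $\shF_k$ are torsion-free and we do not need the divisor $D$ in the bound — and no appeal to \cite{BDPP} or the uniruledness dichotomy, since the conclusion $\omega_X(B)$ big follows directly from bigness of $\shH$ and $\det\OmX^1(\log B) = \omega_X(B)$.
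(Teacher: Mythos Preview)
Your argument for the first assertion—the construction of the big subsheaf $\shH \hookrightarrow (\OmX^1(\log B))^{\otimes s}$ via the iterated Higgs field and the weak positivity of $K_s(\shEb)^\vee$—is correct and is exactly the approach the paper takes: it says to rerun the proof of \theoremref{thm:submodule-positivity} along the chain of maps with poles in $B$, noting that no torsion branch arises. Your remark about the two kernels agreeing on sections of $\shF_s$ is also correct.

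The deduction of the last sentence, however, has a genuine gap. You claim an inclusion $\det\shH \hookrightarrow \det\bigl((\OmX^1(\log B))^{\otimes s}\bigr)$, but no such inclusion exists in general: if $\shH$ has rank $r$ strictly smaller than the rank $N = n^s$ of the ambient bundle, taking exterior powers only gives $\det\shH \hookrightarrow \bigwedge^r (\OmX^1(\log B))^{\otimes s}$, a bundle of rank $\binom{N}{r}$, not a line bundle. What one actually obtains from the saturated short exact sequence $0 \to \shH \to (\OmX^1(\log B))^{\otimes s} \to \shQ \to 0$ is
\[
\omega_X(B)^{\otimes n s} \simeq \det\shH \otimes \det\shQ,
\]
so bigness of $\omega_X(B)$ would follow once one knows that $\det\shQ$ is pseudo-effective. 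That statement is precisely the theorem of Campana--P\u{a}un \cite[Theorem~7.6]{CPa} (restated in the paper as \theoremref{thm:CP}), and it is what the paper invokes for this step. It is not elementary: it is the logarithmic extension of the result that torsion-free quotients of tensor powers of $\OmX^1$ have pseudo-effective determinant, and it is exactly here that the possible uniruledness of $X$ is absorbed. So your claim that the conclusion ``follows directly from bigness of $\shH$'' without any appeal to results of Campana--P\u{a}un type is incorrect. (Incidentally, the slogan ``a sub-line-bundle of a big line bundle is big'' is false as stated—consider $\shO_{\PP^1}(-1) \hookrightarrow \shO_{\PP^1}(1)$; you presumably meant the opposite implication, which is true but, as explained, not applicable here.)
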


The last part of the theorem is due to Campana-P\u{a}un; once we have the existence of a Viehweg-Zuo sheaf $\shH$ as in the statement, it follows from:

\begin{theorem}[{\cite[Theorem 7.6]{CPa}}]\label{thm:CP}
Let $X$ be a smooth projective variety and $B$ a simple normal crossings divisor on $X$. Assume that there for some $s \ge 1$ 
there is an inclusion 
$$\shH \hookrightarrow \big(\OmX^1(\log B)\big)^{\otimes s},$$
where $\shH$ is a sheaf whose determinant is big. Then $\omega_X (B)$ is a big line bundle.
\end{theorem}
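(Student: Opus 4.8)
The plan is to follow the circle of ideas around Campana--P\u aun's orbifold generic semi-positivity theorem, arguing by contradiction from the assumption that $\omX(B) = \det \OmX^1(\log B)$ is \emph{not} big. I would first reduce to the case where the Viehweg--Zuo sheaf is a single big line bundle inside a tensor power of $\OmX^1(\log B)$. Let $r$ be the generic rank of $\shH$. Replacing $\shH$ by its saturation in $(\OmX^1(\log B))^{\otimes s}$ only enlarges its determinant (the quotient $\det(\text{saturation})/\det\shH$ is effective), so we may assume $\shH$ is reflexive and a subbundle outside a subset of codimension $\geq 2$; there $\det\shH = \bigwedge^r \shH$ embeds into $\bigwedge^r\bigl((\OmX^1(\log B))^{\otimes s}\bigr)$, which in characteristic zero is a direct summand of $(\OmX^1(\log B))^{\otimes rs}$ via the antisymmetrisation projector, and this inclusion extends across codimension two since both sides are reflexive. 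Thus we obtain an inclusion of a big line bundle
\[
	A \hookrightarrow (\OmX^1(\log B))^{\otimes m}, \qquad m = rs,
\]
and the whole problem becomes: a big line bundle cannot embed into a tensor power of $\OmX^1(\log B)$ once $\omX(B)$ fails to be big.

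The second and main step is to produce a covering family of curves along which $(\OmX^1(\log B))^{\otimes m}$ carries no sub-line-bundle of positive degree. Here I would invoke the structure theory for orbifold pairs from \cite{CPa}: if $\omX(B)$ is not big, then, after a birational model on which everything is a morphism, there is a fibration $g\colon X \to Z$ with $0 \le \dim Z < \dim X$ --- with $Z$ a point precisely when $\omX(B)$ is not pseudo-effective, i.e.\ when $(X,B)$ is orbifold-uniruled --- whose general orbifold fibre $(F, B_F)$ carries a covering family $\{C_t\}$ of curves with $(K_F + B_F)\cdot C_t \le 0$: either $F$ is orbifold-uniruled and the $C_t$ are ``$B_F$-free'' rational curves with $-(K_F+B_F)\cdot C_t > 0$, or $K_F + B_F$ is numerically trivial and $C_t$ is a general complete-intersection curve. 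In either case the (orbifold analog of the) generic semi-positivity theorem --- the orbifold version of \cite[Theorem 1.2]{CPa} --- shows that the normalised restriction $\nu^{\ast}\OmX^1(\log B)$, for $\nu\colon \widetilde C_t \to X$, is \emph{semi-negative}: in the rational-curve case it splits as a sum of line bundles of non-positive degree (a positive quotient would, by a bend-and-break argument adapted to the boundary, contradict the freeness of $C_t$), while in the numerically trivial case $\OmX^1(\log B)|_{C_t}$ is semistable of degree $0$. Hence all Harder--Narasimhan slopes of $(\nu^{\ast}\OmX^1(\log B))^{\otimes m}$ are $\le 0$, so it contains no line subbundle of positive degree.

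The third step is the numerical contradiction. Restricting the inclusion from Step~1 gives $\nu^{\ast}A \hookrightarrow (\nu^{\ast}\OmX^1(\log B))^{\otimes m}$, whence $\deg \nu^{\ast}A \le 0$ by Step~2. On the other hand $A$ is big on $X$, hence also after restriction to the general fibre $F$; writing $A|_F$ as the sum of an ample $\QQ$-divisor $P$ and an effective $\QQ$-divisor $N$ (Kodaira's lemma), a general member $C_t$ covers $F$ and so is not contained in $\Supp N$, giving $\deg \nu^{\ast}A = P\cdot C_t + N\cdot C_t > 0$. This contradiction forces $\omX(B)$ to be big, which is the assertion.

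The part I expect to be the real obstacle is Step~2: the construction of the fibration $g$ together with the covering families $\{C_t\}$, and the semi-negativity statement for $\OmX^1(\log B)$ along them. Both are sensitive to the boundary divisor $B$ --- one cannot simply pass to a cover to remove it --- and one must work systematically in Campana's category of $\mathbb{C}$-pairs, use the orbifold form of Mori's bend-and-break to produce rational curves meeting $B$ in a controlled (``$B$-free'') way, and, in the range where $\omX(B)$ is pseudo-effective but not big, feed in an abundance/non-vanishing type input (or the log minimal model program) to split off the ``positive part'' and expose orbifold-uniruled or numerically trivial fibres. This is exactly the technical heart of \cite{CPa}; Steps~1 and~3 above are formal consequences.
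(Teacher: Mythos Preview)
The paper does not give a proof of this statement: \theoremref{thm:CP} is quoted from \cite{CPa} as a black box, with only the remark that ``in \cite{CPa} the result is stated when $\shH$ is a line bundle, but the proof works identically for any subsheaf such that $\det \shH$ is big,'' and that the non-pseudo-effective case is handled in \cite{CPa} via a more general orbifold theorem. So there is no ``paper's own proof'' to compare your proposal against.

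That said, your sketch is a reasonable outline of the actual Campana--P\u{a}un strategy, and your Step~1 (reduction to a big line bundle via $\det\shH$) is exactly what the paper's remark is alluding to. Your Step~2 is where all the content lies, and you are right to flag it as the obstacle; but as written it is more a wish list than an argument. In particular, the dichotomy you set up --- orbifold-uniruled fibres versus numerically trivial $K_F + B_F$ --- does not simply fall out of ``$\omX(B)$ not big''; the actual argument in \cite{CPa} proceeds by a movable-intersection/slope argument for the log cotangent bundle and does not require producing a fibration $g$ with the clean structure you describe, nor does it invoke abundance. If you want to fill in Step~2 honestly you should look at how \cite[Theorem~7.6]{CPa} is deduced from their main pseudo-effectivity theorem rather than trying to reconstruct a bend-and-break argument from scratch.
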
 

Note that in \cite{CPa} the result is stated when $\shH$ is a line bundle, but the
proof works identically for any subsheaf such that 
$\det \shH$ is big. Moreover, what we state here is only a special case of their theorem; in fact, the possible non-pseudoeffectivity of 
$\omega_X$ is very cleverly dealt with in \cite{CPa} by proving a more general theorem that applies to the orbifold setting as well.

\section{Families of varieties}

To show the statement in \theoremref{thm:hyperbolicity}, it is immediate that after a birational modification we can assume that the $f$-singular locus is a divisor $D_f$, and hence it is also enough to just take $D = D_f$. We will always assume that this is the case in what follows.

\subsection{Non-uniruled case}\label{scn:non-uniruled}
In this section we prove  \theoremref{thm:hyperbolicity} under the extra assumption that the base space $X$ is not uniruled. This is 
included since at a first reading it allows to avoid many of the technicalities involved in dealing with the remaining case, while containing all the key ideas. The proof in the general case is given in the next section.

As recalled in the introduction, Viehweg's $Q_{n,m}$ conjecture states that if $f$ is a fiber space with maximal variation, 
then $\det \fl \omYX^{\otimes m}$ is big for some $m > 0$. It is known to hold when the fibers are of general type by \cite{Kollar}, and more generally (nowadays) when they have good minimal models by \cite{Kawamata}. Thus \theoremref{thm:hyperbolicity} in the non-uniruled case is a consequence of the following statement and \theoremref{thm:submodule-positivity}.

\begin{theorem}\label{thm:submodule-existence}
Let $f: Y \rightarrow X$ be an algebraic fiber space between smooth projective varieties, with branch locus a divisor 
$D_f \subset X$. Assume that there is an integer $m > 0$ such that $\det \fl \omYX^{\otimes m}$ is big. Then there exists a pure Hodge module $M$ with strict support $X$  and underlying filtered
$\Dmod_X$-module $(\Mmod, F_{\bullet} \Mmod)$, together with a graded $\shA_X$-submodule 
$\shGb \subseteq \gr_{\bullet}^F \Mmod$ which is large with respect to $D_f$.
\end{theorem}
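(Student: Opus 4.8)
**Proof proposal for Theorem~\ref*{thm:submodule-existence}.**

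The plan is to reduce the hypothesis ``$\det\fl\omYX^{\tensor m}$ big'' to the condition \eqref{eq:sections} on a suitable line bundle $B = \omYX \tensor \fu L^{-1}$, and then invoke \theoremref{thm:VZ}. Concretely, set $\shF = (\fl\omYX^{\tensor m})^{\vee\vee}$; by hypothesis $\det\shF$ is big, so by the definition of bigness in the sense of Viehweg there is an integer $r \geq 1$ such that $(S^r\shF)^{\vee\vee} \tensor A^{-1}$ is weakly positive for a fixed ample line bundle $A$ on $X$, and in particular generically globally generated after twisting by a further ample. The first step is therefore to produce, from this, a big line bundle $L$ on $X$ together with an inclusion $\fu L \hookrightarrow \omYX^{\tensor N}$ for some $N \geq 1$: this is exactly the ``fiber product trick'' of Viehweg. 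One takes the $N$-fold fiber product $Y^{(N)} = Y\times_X \dotsm \times_X Y$, resolves singularities, and uses that $\det\fl\omYX^{\tensor m}$ being big forces, after passing to a fiber power and using base change together with weak positivity of $\fl\omYX^{\tensor m}$ itself (a theorem of Viehweg/Kawamata), a nonzero map from a big line bundle $L$ into $\fl\bigl(\omega_{Y^{(N)}/X}^{\tensor m}\bigr)$, hence by adjunction $\fu L \hookrightarrow \omega_{Y^{(N)}/X}^{\tensor m}$ on the resolution.

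The second step is bookkeeping: replace $Y$ by (a resolution of) $Y^{(N)}$ and $f$ by the induced morphism $\tilde f$; this does not change the base $X$ nor enlarge the singular locus $D_f$ (a resolution of the fiber power is smooth exactly where $f$ was), and now $B \defeq \omega_{\tilde Y/X} \tensor \tilde f^{\ast}L^{-1}$ satisfies $H^0(\tilde Y, B^{\tensor m}) \neq 0$ because the inclusion $\tilde f^{\ast}L \hookrightarrow \omega_{\tilde Y/X}^{\tensor m}$ is precisely a nonzero section of $B^{\tensor m}$. Thus \eqref{eq:sections} holds for the data $(\tilde f, L)$. Applying \theoremref{thm:VZ} to this data yields a polarizable Hodge module $M$ with strict support $X$, underlying filtered $\Dmod_X$-module $(\Mmod, F_\bullet\Mmod)$ with $F_k\Mmod = 0$ for $k < 0$, and a graded $\shA_X$-submodule $\shGb \subseteq \gr_\bullet^F\Mmod$ with $\shG_0 \simeq L \tensor \tilde f_\ast\shO_{\tilde Y} = L$ (the fibers of a resolution of a fiber power are still connected), with $\Supp\shG \subseteq S_{\tilde f}$, and with each $\shG_k$ torsion-free over $X \setminus D_{\tilde f} = X \setminus D_f$.

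The third step is to check that this $\shGb$ is \emph{large with respect to $D_f$} in the sense of the definition preceding \theoremref{thm:submodule-positivity}. Since $p(M) = 0$ after the Tate twist in \parref{scn:HM}, we have $F_{p(M)}\Mmod = F_0\Mmod$ and the required inclusion $A(-\ell D_f) \hookrightarrow \shG_{p(M)} = \shG_0 \simeq L$ holds with $\ell = 0$ because $L$ itself is big — so one may simply take $A = L$. The second bullet, that the torsion of every $\shG_k$ is supported inside $D_f$, is exactly the torsion-freeness over $X\setminus D_f$ from \theoremref{thm:VZ}\ref{en:VZc}. This gives the conclusion. The main obstacle is the first step: carrying out the fiber-product construction so as to genuinely convert bigness of the determinant of a pushforward into a sub-line-bundle of a power of $\omYX$ requires the full force of Viewheg's weak-positivity machinery (weak positivity of $\fl\omega_{Y/X}^{\tensor m}$, its behavior under fiber products, and the covering/base-change arguments), together with care that resolving the (highly singular) fiber power does not spoil the control on $D_f$; everything afterward is a direct appeal to \theoremref{thm:VZ} and the definitions.
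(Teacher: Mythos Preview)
Your overall strategy is correct and matches the paper's proof closely: reduce to condition \eqref{eq:sections} via Viehweg's fiber product trick, then apply \theoremref{thm:VZ}. However, there is a gap in your Step~1 that propagates to Step~3.

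You assert that the fiber product construction yields an inclusion of a \emph{big} line bundle $L$ into $f^{(N)}_{\ast}\omega_{Y^{(N)}/X}^{\tensor m}$, and then in Step~3 you conclude largeness with $\ell = 0$. This is not justified. The comparison morphism
\[
	f^{(r)}_{\ast}\omega_{Y^{(r)}/X}^{\tensor m} \longrightarrow \bigl( (\fl\omYX^{\tensor m})^{\tensor r} \bigr)^{\vee\vee}
\]
is only an isomorphism over $U = X \setminus D_f$; over $D_f$ it can fail. Consequently, the inclusion $(\det\fl\omYX^{\tensor m})^{\tensor N} \hookrightarrow \bigl( (\fl\omYX^{\tensor m})^{\tensor r} \bigr)^{\vee\vee}$ only yields
\[
	(\det\fl\omYX^{\tensor m})^{\tensor N}(-kD_f) \hookrightarrow f^{(r)}_{\ast}\omega_{Y^{(r)}/X}^{\tensor m}
\]
for some integer $k \geq 0$ that you have no control over. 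The line bundle $L$ one feeds into \theoremref{thm:VZ} is therefore of the form $A(-k_0 D_f)$ with $A$ ample and $k_0 \geq 0$ possibly large, and need not be big. This is precisely why the definition of ``large with respect to $D$'' permits the twist $A(-\ell D) \hookrightarrow \shG_{p(M)}$ with $\ell \geq 0$: one takes $\ell = k_0$ here, not $\ell = 0$. Invoking weak positivity of $\fl\omYX^{\tensor m}$ does not help you avoid this twist; eliminating it requires semistable reduction in codimension one, which in turn needs the stronger hypothesis of \theoremref{thm:VZ-sheaves} (bigness on generically finite covers) and is exactly what the paper does separately in the general case.

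The fix is minor: drop the claim that $L$ is big, carry the twist $-k_0 D_f$ through, and verify largeness with $\ell = k_0$. Everything else in your outline is correct.
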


\begin{proof}
Note that since the conclusion is purely on $X$, we are allowed to change the domain $Y$ as necessary.

\medskip
\noindent
{\bf Step 1.}
We reduce to the following situation: given a fiber space over $X$ as in the statement, and given any ample line bundle $A$ on $X$, we can modify the picture to a new family $f': Y' \rightarrow X$ satisfying the property that there exists an integer $k_0 \ge 0$ such that:
$$B = \omega_{Y'/X} \otimes {f'}^*L^{-1} \,\,\,\,{\rm satisfies} \,\, (\ref{eq:sections}) \,\,{\rm  with} \,\,\,\, L = A (-k_0 D_f).$$

To this end, fix an $m > 0$ such that
$$L_m : = \det \fl \omYX^{\otimes m}$$  
is a big line bundle. Given any ample line bundle $M$ on $X$, we will produce a new family 
$f': Y' \rightarrow X$, smooth over $U = X \setminus D_f$, such that
\begin{equation}\label{wish}
H^0 \big(Y', \omega_{Y'/X}^{\otimes m} \otimes {f'}^* M^{-1}(kD_f)\big) \neq 0
\end{equation}
for some integer $k \ge 0$. 
In particular, we can take $M = A^{\otimes m}$; also, perhaps by increasing it, we can assume $k = k_0 \cdot m$ for some $k_0\ge 0$ in order  to obtain the reduction step. 

To prove the existence of such a family $f'$, note first that for $N$ sufficiently large we can write
$$L_m^{\otimes N} \simeq M \otimes \OX(B),$$
where $B$ is an effective divisor.  Denote by $r_0$ the rank of $\fl \omYX^{\otimes m}$ over $U$, where it is a locally free sheaf by Siu's invariance of plurigenera, and define $r := N\cdot r_0$. Then there is an inclusion of sheaves 
$$L_m^{\otimes N} \hookrightarrow (\fl \omYX^{\otimes m})^{\otimes r}$$
(which is split over the locus where $\fl \omYX^{\otimes m}$
is locally free).

Now we take advantage of Viehweg's fiber product trick. Consider a resolution of
singularities $Y^{(r)}$ of the main component of the $r$-fold fiber
product $Y \times_{X}  \cdots \times_{X} Y$, with its induced morphism
$f^{(r)}: Y^{(r)} \rightarrow X$. Note that this morphism is smooth over $U$ as well; 
moreover, it is well known (see \cite[Corollary 4.11]{Mori} or \cite[Lemma 3.5]{Viehweg1}) that there exists a morphism
$$f^{(r)}_* \omega_{Y^{(r)}/X}^{\otimes m} \longrightarrow \big( (\fl \omYX^{\otimes m})^{\otimes r}\big)^{\vee \vee},$$
which is an isomorphism over $U$. Since $L_m^{\otimes N}$ injects into the right hand side, and the 
morphism $f^{(r)}$ degenerates at most over $D_f$, it follows 
that there exists an inclusion
$$L_m^{\otimes N} (- kD_f) \hookrightarrow f^{(r)}_* \omega_{Y^{(r)}/X}^{\otimes m}$$
for some integer $k \ge 0$. This implies in particular that on $Y^{(r)}$ we have 
$$H^0 \big(Y^{(r)}, \omega_{Y^{(r)}/X}^{\otimes m} \otimes {f^{(r)}}^* M^{-1} (kD_f) \big) \neq 0.$$
Thus we can take $(Y^{(r)} , f^{(r)})$ to play the role of $(Y', f')$ in ($\ref{wish}$).

\medskip
\noindent
{\bf Step 2.}
Fix now an ample line bundle $A$ on $X$. The considerations above show that, in order
to prove the theorem, we can assume that there exists an integer 
$k_0 \ge 0$ such that the condition in \eqref{eq:sections} is satisfied for $f'$ with respect to the line bundle 
$$L = A (-k_0D_f).$$
But then \theoremref{thm:VZ} provides a graded $\Sym \shT_X$-module $\shGb$ as in its statement, which 
in particular is large with respect to $D_f$.
\end{proof}

\subsection{General case}\label{scn:general}

This section contains the proof of \theoremref{thm:VZ-sheaves}, and explains how to deduce \theoremref{thm:hyperbolicity} in 
the general case. 
The main reason \theoremref{thm:VZ-sheaves} is better suited for the argument is that the integer $r$ in the 
statement of \theoremref{thm:submodule-positivity} could be very large, precluding its use in the uniruled case in a similar way to the previous section.

\begin{proof}[Proof of \theoremref{thm:VZ-sheaves}]
{\bf Step 1.}
We refine Step $1$ in the proof of \theoremref{thm:submodule-existence} to obtain a stronger statement under the assumption that the bigness of the determinant of some pluricanonical image holds on any generically finite cover. We claim that for any line bundle $A$ on $X$ one can modify the picture to a new family $f': Y' \rightarrow X$ such that:
$$B = \omega_{Y'/X} \otimes {f'}^*A^{-1} \,\,\,\,{\rm satisfies} \,\, (\ref{eq:sections}).$$

This requires using an extra semistable reduction in codimension one procedure, following work of Viehweg. Indeed, for instance \cite[Lemma 6.1]{Viehweg1} says that there is a commutative diagram 
$$
\begin{tikzcd}
Y  \dar{f} 
		& \tilde Y \lar \dar{\tilde f} \\
X & \tilde X \lar{\tau}
\end{tikzcd}
$$
with $\tau$ generically finite, $\tilde X$ and $\tilde Y$ smooth and projective, and 
after removing a closed subset $Z$ of codimension at least $2$ in $X$, $\tau$ is finite and flat and $f^\prime$ is semistable.\footnote{Although not strictly necessary here, note that an even stronger construction based on the Abramovich-Karu weak semistable reduction can be considered; see e.g. \cite[Lemma 2.6]{VZ2}.} Since our goal is to check the inclusion of line bundles into various torsion-free sheaves, we can ignore $Z$ and assume that these properties hold for the full diagram above. (However
$\tilde f$ may have a bigger degeneracy locus than $\tau^{-1} (D_f)$.)

For any given $r$, we again consider a resolution of singularities $Y^{(r)}$ of the main component of the $r$-fold fiber
product, with its induced morphism $f^{(r)}: Y^{(r)} \rightarrow X$, and similarly for 
$\tilde f: \tilde Y \rightarrow \tilde X$,  chosen in such a way that we  have a commutative diagram 
$$
\begin{tikzcd}
Y^{(r)}  \dar{f^{(r)}} 
		& {\tilde Y}^{(r)} \lar \dar{{\tilde f}^{(r)}} \\
X & \tilde X \lar{\tau}.
\end{tikzcd}
$$

Pick now a  line bundle on $X$ of the form $L = A^{\otimes m} \otimes M$, where $M$ is an ample line 
bundle chosen such that the sheaf $M \otimes (\tau_* \shO_{\tilde X})^\vee$ is globally generated.
By hypothesis there exists an integer $m > 0 $ such that 
$\det {\tilde f}_* \omega_{\tilde Y/ \tilde X}^{\otimes m}$ is big. Therefore exactly as in 
Step~1 in the proof of \theoremref{thm:submodule-existence}, for some sufficiently large $r$ we obtain an inclusion
$$\tau^*L \hookrightarrow \big({\tilde f}_* \omega_{\tilde Y/ \tilde X}^{\otimes m}\big)^{\otimes r}.$$
But since $\tilde f$ is semistable, by \cite[Lemma 3.2]{Viehweg1} (see also \cite[Corollary 4.11]{Mori}) the natural morphism 
$${\tilde f}^{(r)}_* \omega_{{\tilde Y}^{(r)}/\tilde X}^{\otimes m} \longrightarrow \big( 
({\tilde f}_* \omega_{\tilde Y/ \tilde X}^{\otimes m})^{\otimes r}\big)^{\vee \vee}$$
is in fact an isomorphism.  On the other hand, since $\tau$ is flat, by \cite[Lemma 3.2]{Viehweg1} (see also \cite[4.10]{Mori}) we have an inclusion
$${\tilde f}^{(r)}_* \omega_{{\tilde Y}^{(r)}/ \tilde X}^{\otimes m} \hookrightarrow \tau^* f^{(r)}_* \omega_{Y^{(r)}/ X}^{\otimes m}.$$
Putting everything together, we obtain an inclusion
$$\tau^*L \hookrightarrow \tau^* f^{(r)}_* \omega_{Y^{(r)}/ X}^{\otimes m},$$
and consequently an inclusion
$$L \hookrightarrow  f^{(r)}_* \omega_{Y^{(r)}/ X}^{\otimes m} \otimes \tau_* \shO_{\tilde X}.$$
Because of the global generation of $M \otimes (\tau_* \shO_{\tilde X})^\vee$, this in turn induces a sequence of inclusions
$$A^{\otimes m} \hookrightarrow  f^{(r)}_* \omega_{Y^{(r)}/ X}^{\otimes m} \otimes \tau_* \shO_{\tilde X} \otimes M^{-1}
\hookrightarrow \bigoplus f^{(r)}_* \omega_{Y^{(r)}/ X}^{\otimes m},$$
Finally, this provides a non-trivial homomorphism 
$A^{\otimes m} \rightarrow f^{(r)}_* \omega_{Y^{(r)}/ X}^{\otimes m}$, which means that we can take 
$(Y^{(r)} , f^{(r)})$ to play the role of $(Y', f')$ as desired.

\medskip
\noindent
{\bf Step 2.} 
Fix now a line bundle $A$ on $X$ such that $A (-D_f)$ is ample.
Using Step $1$, and switching back to the notation $f: Y \rightarrow X$, we can now assume that condition ($\ref{eq:sections}$) is satisfied for $f$ with respect to $L = A$, so that \theoremref{thm:VZ} applies to this set-up. By passing to a birational model of the base, and therefore assuming 
only that $A (-D_f)$ is big and nef,  we can arrange in addition that the singularities of the Hodge module $M$ constructed in \theoremref{thm:VZ} occur along a simple normal crossings divisor $D$ which contains $D_f$.

This is the context where \theoremref{thm:Higgs} applies; we use it to obtain a graded 
submodule $\shFb \subseteq \shEb$ of a (graded logarithmic) Higgs bundle 
\[
	\theta \colon \shEb \to \OmX^1(\log D) \tensor \shE_{\bullet+1}, 
\]
such that
\[\theta(\shFb) \subseteq \OmX^1(\log D_f) \tensor \shF_{\bullet+1}
\]
and moreover
$$A(-D_f) \subseteq \shF_0 \,\,\,\,\,\, {\rm and} \,\,\,\,\,\, \shF_k = 0 ~~{\rm for } ~ ~k < 0.$$
We can then apply \theoremref{thm:submodule-Higgs} to obtain the desired conclusion.
\end{proof}

\begin{proof}[Proof of \theoremref{thm:hyperbolicity}]
If $f$ has maximal variation, then clearly so does any $\tilde f: \tilde Y \rightarrow \tilde X$ obtained from $f$ by
a base change $\tau: \tilde X \rightarrow X$ followed by a desingularization of $Y\times_X \tilde X$. Under our 
hypotheses it follows that \theoremref{thm:VZ-sheaves} applies, which gives the conclusion
in combination with \theoremref{thm:CP}.
\end{proof}

\subsection{On the Kebekus-Kov\'acs conjecture}\label{scn:KK}
Kebekus and Kov\'acs have proposed a natural conjecture generalizing Viehweg's hyperbolicity conjecture to families that are not necessarily of maximal variation. 

\begin{conjecture}[{\cite[Conjecture 1.6]{KK1}}]\label{generalized_conjecture}
If $X^{\circ}$ is smooth and quasi-projective, and $f^{\circ}: Y^{\circ} \rightarrow X^{\circ}$ is a smooth family of canonically
polarized varieties with maximal variation,  then either $\kappa (X^{\circ}) = - \infty$ and $\dim X^\circ > {\rm Var}(f)$, or 
$\kappa (X^\circ) \ge {\rm Var} (f)$. 
\end{conjecture}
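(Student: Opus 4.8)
The plan is to prove \conjectureref{generalized_conjecture} in the case where $X^\circ = X$ is smooth projective (so the boundary $D$ is empty), and more generally for families whose geometric generic fiber admits a good minimal model, conditionally on the Campana--Peternell abundance-type conjecture; for canonically polarized fibers, Taji's theorem \cite{Taji} removes part of the conditionality. We may assume $\Var(f) \ge 1$. The case $\kappa(X) = -\infty$ is immediate: if moreover $\Var(f) = \dim X$, then \theoremref{thm:hyperbolicity} applied with $D_f = \emptyset$ says $\omega_X$ is big, so $\kappa(X) = \dim X \ge 0$, a contradiction; hence $\kappa(X) = -\infty$ forces $\Var(f) < \dim X$, which is exactly the first alternative.

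Now suppose $\kappa(X) \ge 0$, write $v = \Var(f)$, and argue by contradiction assuming $\kappa(X) < v$. After a birational modification we may take the Iitaka fibration $\phi \colon X \to Z$ to be a morphism of smooth projective varieties with $\dim Z = \kappa(X) < v$. The closure of the image of the moduli (variation) map of $f$ has dimension $v > \dim Z$, so this map cannot factor through $\phi$; hence for a general fiber $F = \phi^{-1}(z)$ the restricted family $f_F \colon Y_F \to F$, with $Y_F$ a resolution of $f^{-1}(F)$, still has positive variation, indeed $\Var(f_F) \ge v - \dim Z \ge 1$. On the other hand $F$ is a general fiber of an Iitaka fibration, so $\kappa(F) = 0$.

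These two facts are incompatible. By Viehweg's variation fibration construction, after a generically finite base change $F' \to F$ the family $f_F$ is birational, over a dense open set, to the pullback of a family of maximal variation over a base $V$ with $\dim V = \Var(f_F) \ge 1$; applying \theoremref{thm:hyperbolicity} to that maximal-variation family shows $V$ is of general type. Thus, up to a generically finite cover, $F$ dominates a positive-dimensional variety of general type. But $\kappa(F) = 0$, and the Campana--Peternell abundance-type conjecture predicts that such an $F$ is special in Campana's sense (carries no Bogomolov sheaf), which — together with the behaviour of this property under the generically finite covers at issue — rules out exactly such a dominant map. (When the fibers are canonically polarized, one may instead invoke Taji's theorem \cite{Taji}: a smooth family of canonically polarized varieties over a special base is isotrivial, contradicting $\Var(f_F) \ge 1$.) The contradiction yields $\kappa(X) \ge v = \Var(f)$.

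The hard part is this final step. For families whose fibers merely have a good minimal model there is no unconditional mechanism converting ``$\kappa(F) = 0$ with $f_F$ of positive variation'' into a contradiction: the required structural input — that varieties of Kodaira dimension zero are special — lies inside the circle of the $C_{n,m}$ and abundance conjectures, which is why the statement is only conditional. One must also track carefully how $\Var$ transforms under restriction to a general fiber of $\phi$ and under the generically finite base change defining the variation fibration, and check that the specialness obstruction survives that base change. Finally, extending the argument to genuinely quasi-projective $X^\circ$ (nonempty $D$) would require an orbifold refinement of the Hodge-module construction of this paper, which we do not pursue here.
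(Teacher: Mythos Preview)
Your approach is genuinely different from the paper's, and it has a real gap. The paper does not argue via the Iitaka fibration at all. Instead, it works directly on $X$: Kawamata's strengthening of $Q_{n,m}$ gives $\kappa(\det f_*\omega_{Y/X}^{\otimes m}) \geq \Var(f)$, and the paper's own Hodge-module/Higgs construction (run with a line bundle $A$ of Kodaira dimension $\geq \Var(f)$ rather than a big one) produces a subsheaf $\shH \subseteq (\Omega_X^1)^{\otimes s}$ of the form $\shH \simeq \shG \otimes A$ with $\shG$ weakly positive. Passing to determinants in the short exact sequence $0 \to \shH \to (\Omega_X^1)^{\otimes s} \to \shQ \to 0$ and using Campana--P\u{a}un for the pseudo-effectivity of $\det\shQ$, one gets $\omega_X^{\otimes s} \simeq (\text{pseudo-effective}) \otimes B$ with $\kappa(B) \geq \Var(f)$. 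The Campana--Peternell conjecture, \emph{exactly as stated in the paper} ($K_X \sim_{\QQ} A + B$ with $A$ effective and $B$ pseudo-effective implies $\kappa(X) \geq \kappa(A)$), then yields $\kappa(X) \geq \Var(f)$ in one step.

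The gap in your argument is the inference ``$\kappa(F) = 0$ $\Rightarrow$ $F$ is special''. This is \emph{not} what \conjectureref{CP_conjecture} says, and it is not known to follow from it; it is a separate expectation in Campana's framework (essentially requiring abundance-type input for $F$ itself). Without it, Taji's theorem is inapplicable, since Taji needs the base to be special, not merely of Kodaira dimension zero. Your alternative route --- $F'$ dominates a positive-dimensional $V$ of general type, hence $\kappa(F') \geq \dim V$ --- likewise requires $C_{n,m}$-type input on the fibers of $F' \to V$ that you have not secured. So while your reduction to the general Iitaka fiber is natural, the contradiction you seek at the end is not available from the hypotheses you have actually assumed; the paper's direct positivity argument avoids this entirely by never leaving $X$.
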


In \cite{KK1} and \cite{KK3} they showed that this conjecture holds when $X^\circ$ has dimension two and three, respectively, and provided a beautiful structural analysis according to the different possible values of the variation in these cases.
In \cite{KK2} they showed that it holds when $X^\circ$  is projective of arbitrary dimension if the conjectures of the minimal model program,  including abundance, are assumed for all varieties of dimension at most $\dim X^\circ$. The conjecture is now known to 
hold in general due to work of Taji \cite{Taji}, who proved Campana's isotriviality conjecture, which in turn implies the Kebekus-Kov\'acs conjecture.

We note for completeness that, again when $ X = X^\circ$ is projective, a small variation of the methods in this paper leads to a proof of \conjectureref{generalized_conjecture} for the more general types of families of varieties considered here as well, provided that a statement along the lines of the abundance conjecture were known for $K_X$. Such a statement was conjectured by Campana and Peternell; the case $A = 0$ is a famous special case of the abundance conjecture. 

\begin{conjecture}[{\cite[Conjecture p.2]{CP}}]\label{CP_conjecture}
If $X$ is a smooth projective variety and  
$$K_X \sim_{\QQ} A + B,$$
where $A$ and $B$ are an effective and a pseudo-effective $\QQ$-divisor respectively, then  $\kappa (X) \ge \kappa (A)$. 
\end{conjecture}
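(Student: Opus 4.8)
The displayed statement is the Campana--Peternell conjecture, and it is a genuine open problem; so what follows is really a \emph{reduction}, and the point is to identify what it reduces to. Two extreme cases are immediate. If $A$ is big, then $K_X \sim_{\QQ} A+B$ is big (big plus pseudo-effective), so $\kappa(X)=\dim X\ge\kappa(A)$. If instead $\kappa(A)=0$ — the smallest possible value, since $A$ is effective — the conjecture asserts $\kappa(X)\ge 0$, and since $K_X\sim_{\QQ}A+B$ is pseudo-effective this is exactly the non-vanishing conjecture for $K_X$; the subcase $A=0$ literally \emph{is} non-vanishing. So the substance is squarely the abundance conjecture, and the plan is to make this precise.

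The cleanest route is to deduce the conjecture from abundance in Nakayama's numerical form, namely the equality $\kappa(X)=\kappa_\sigma(K_X)$, where $\kappa_\sigma$ denotes Nakayama's numerical Kodaira dimension, via the elementary inequality
\[
	\kappa(A)\ \le\ \kappa_\sigma(A+B)\qquad\text{for $A$ effective and $B$ pseudo-effective.}
\]
Granting this, one has $\kappa(A)\le\kappa_\sigma(A+B)=\kappa_\sigma(K_X)$, because $\kappa_\sigma$ is a numerical invariant and $A+B\equiv K_X$; and $\kappa_\sigma(K_X)=\kappa(X)$ by abundance, which is the assertion. To see the inequality, fix a sufficiently ample divisor $H$; for $m\gg 0$ the divisor $\lfloor mB\rfloor+H$ is big (pseudo-effective plus ample, up to the bounded fractional part), hence $\QQ$-linearly equivalent to an effective divisor, so from $\lfloor m(A+B)\rfloor+H\ge\lfloor mA\rfloor+\bigl(\lfloor mB\rfloor+H\bigr)$ and the fact that adding an effective divisor only increases $h^0$ we get $h^0\bigl(X,\lfloor m(A+B)\rfloor+H\bigr)\ge h^0\bigl(X,\lfloor mA\rfloor\bigr)$; taking $\limsup$ over $m$ along the multiples of the denominator of $A$ yields $\kappa_\sigma(A+B)\ge\kappa(A)$. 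The only delicate points are the usual rounding issues, absorbed into the choice of $H$ (the value of $\kappa_\sigma$ is independent of $H$), so I would not grind through them here.

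A more inductive alternative uses the Iitaka fibration of $K_X$. Assuming non-vanishing, $\kappa(X)\ge 0$, so after passing to a birational model — which changes neither $\kappa(X)$ nor $\kappa(A)$, since one only pulls $A$ back and adds exceptional effective divisors, which does not affect spaces of pluri-sections — the Iitaka fibration $g\colon X\to Z$ is a morphism of smooth projectives with $\dim Z=\kappa(X)$. On a general fibre $F$ one has $K_F=(K_X)|_F\sim_{\QQ}A|_F+B|_F$ with $A|_F$ effective, $B|_F$ pseudo-effective (restriction of a pseudo-effective class to a general fibre stays pseudo-effective, by the movable-curve characterization), and $\kappa(F)=\kappa(F,K_F)=0$ by construction; the conjecture applied to $F$ in lower dimension gives $\kappa(A|_F)\le\kappa(F)=0$, and Iitaka's easy addition then yields $\kappa(A)\le\kappa(A|_F)+\dim Z=\kappa(X)$. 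The stratum $\kappa(X)=0$ (where $F=X$) is not reached by this induction and again needs abundance — precisely $\kappa_\sigma(K_X)=0$ — for $X$ itself, so both routes bottom out at the same place.

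The main obstacle is therefore the abundance conjecture, non-vanishing included; everything else is routine positivity bookkeeping, the one genuinely useful observation being the inequality $\kappa(A)\le\kappa_\sigma(A+B)$. Unconditionally the argument proves the conjecture whenever abundance in the form $\kappa(X)=\kappa_\sigma(K_X)$ is known — in particular for $\dim X\le 3$ and for $X$ of general type — and in general it reduces Campana--Peternell to the abundance half of the minimal model program. I expect that the real difficulty of the conjecture is genuinely this; the contribution of the present paper is to \emph{use} it, together with the Viehweg--Zuo sheaf of Theorem~\ref{thm:VZ-sheaves}, rather than to prove it.
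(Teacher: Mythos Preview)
The paper does not prove this statement: it is stated as an open conjecture of Campana and Peternell, with no attempted proof or reduction. It appears in \parref{scn:KK} solely as a \emph{hypothesis}, the point being that \conjectureref{generalized_conjecture} for a smooth family $f$ would follow if \conjectureref{CP_conjecture} were known for the base $X$. The only remark the paper makes about its status is that the case $A=0$ is a well-known special case of the abundance conjecture. You correctly recognize all of this at the outset.

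Your write-up therefore goes beyond the paper. The reduction you give --- that the conjecture follows from abundance in the form $\kappa(X)=\kappa_\sigma(K_X)$, via the monotonicity $\kappa_\sigma(A+B)\ge\kappa_\sigma(A)\ge\kappa(A)$ for $B$ pseudo-effective --- is standard and essentially correct (the monotonicity is a basic property of Nakayama's $\kappa_\sigma$; your direct section-counting justification has the minor rounding issues you yourself flag, but these are easily absorbed). The alternative route via the Iitaka fibration and easy addition is also sound. Since the paper contains no argument whatsoever for the conjecture, there is nothing to compare your reduction against: you have supplied context the paper simply does not provide.
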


Consider now a smooth family $f \colon Y \to X$, with $X$ and $Y$ smooth and projective. We assume that the fibers are of general type, or more generally that the geometric generic fiber has a good minimal model. 

\medskip

\noindent
\emph{Claim:} ~\conjectureref{generalized_conjecture} holds for $f$ assuming that \conjectureref{CP_conjecture} holds for $X$.

\medskip

To see this, we can assume that $\kappa (X) \ge 0$, 
since in the uniruled case \theoremref{thm:hyperbolicity} already implies that ${\rm Var}(f) < \dim X$. 
In any case, for all families with  fibers as assumed,  Kawamata \cite[Theorem 1.1]{Kawamata} has shown a stronger version of the $Q_{n,m}$ conjecture: for $m\ge 1$ sufficiently large one has 
$$\kappa ( \det f_* \omega_{Y/X}^{\otimes m}) \ge {\rm Var} (f).$$
This implies via an argument similar to that in \theoremref{thm:submodule-existence} (and in fact simpler, since now
$D = \emptyset$) that there exists a graded submodule  
$\shF_{\bullet}$ of a graded Higgs bundle $\shE_{\bullet} \to \Omega_X^1 \otimes \shE_{\bullet + 1}$, 
and a line bundle with $\kappa (A) \ge {\rm Var}(f)$, such that 
$$A \subseteq \shF_0 \,\,\,\,\,\, {\rm and} \,\,\,\,\,\, \shF_k = 0 ~~{\rm for } ~ ~k < 0.$$
This in turn produces for some $s \ge 1$ a subsheaf $\mathcal{H} \subseteq (\Omega_X^1)^{\otimes s}$ as in 
\theoremref{thm:submodule-Higgs}, only this time $\mathcal{H}$ is not big, but rather only has the property that $\mathcal{H} \simeq \mathcal{\shG} \otimes A$,
where $\shG$ is a weakly positive sheaf. Indeed, repeating the argument used in the proof of  \theoremref{thm:submodule-positivity} and \theoremref{thm:submodule-Higgs}, we obtain a morphism
$$K_{p+s}^\vee \otimes A \longrightarrow (\Omega_X^1)^{\otimes s},$$
where $K_{p+s}^\vee$ is weakly positive, and again we take $\shH$ to be its image. In particular we have 
$\det \mathcal{H} \simeq \det \shG \otimes B$ with $\det \shG$ pseudo-effective, and $\kappa (B) \ge {\rm Var}(f)$.
Moreover, the inclusion of $\shH$ induces an exact sequence
$$0 \longrightarrow \mathcal{H} \longrightarrow (\Omega_X^1)^{\otimes s} \longrightarrow \shQ  \longrightarrow 0.$$
As at the end of the proof of \theoremref{thm:submodule-positivity}, we can assume that $\shH$ is saturated, and therefore
by the same result of Campana-P\u aun that $\det \shQ$ is pseudoeffective. Passing to determinants we obtain
$$\omega_X^{\otimes s} \simeq \det \shQ \otimes \det \shG \otimes B,$$
where the first two line bundles on the right are pseudo-effective. At this stage \conjectureref{CP_conjecture} implies that 
$$\kappa (X) \ge \kappa (B) \ge {\rm Var}(f),$$
as predicted by \conjectureref{generalized_conjecture}.

\bibliographystyle{amsalpha}
\bibliography{bibliography}

\end{document}